\newcommand{\N}{{\mathbb N}}
\newcommand{\R}{{\mathbb R}}
\newcommand{\U}{{\mathbb U}}
\newcommand{\V}{{\mathbb V}}
\newcommand{\Vh}{{\V_h}}
\newcommand{\Uh}{{\mathbb U_h}}
\newcommand{\Uad}{{\U_\mathsf{ad}}}
\newcommand{\Uadh}{{\U_{\mathsf{ad}}^{h}}}
\newcommand{\Y}{{\mathbb Y}}
\newcommand{\bx}{{\bm x}}
\newcommand{\dx}{{\,\mathrm d\bx}}
\newcommand{\ua}{{u_\mathsf{a}}}
\newcommand{\ub}{{u_\mathsf{b}}}
\newcommand{\vida}{{\mathsf v _1^{\mathrm{id}}}}
\newcommand{\vidb}{{\mathsf v _2^{\mathrm{id}}}}
\newcommand{\J}{{\mathcal J}}
\newtheorem{theorem}{Theorem}[section]
\newtheorem{lemma}[theorem]{Lemma}
\newtheorem{proposition}[theorem]{Proposition}
\begin{document}
%%-----------------------------
%%      the top matter
%%-----------------------------
\title{Error estimates for a multiobjective optimal control of a pointwise tracking problem}
\thanks{This project has received funding by the Federal Ministry of Education and Research (BMBF) and the Baden-W\"urttemberg Ministry of Science as part of the Excellence Strategy of the German Federal and State Governments. In addition, the first author has been supported by ANID through FONDECYT postdoctoral project 3230126.}
%\thanks{...}% At most 5 thanks
%
\author{Francisco Fuica}
\address{Departamento de Matem\'atica y Ciencia de la Computaci\'on, Universidad de Santiago de Chile, Santiago, Chile.}
\address{Zukunftskolleg, Konstanz University, Universit\"atsstra{\ss}e 10, 78464 Konstanz, Germany.}
\email{francisco.fuica@usach.cl}
\author{Stefan Volkwein}
\address{Department of Mathematics and Statistics, Konstanz University, Universit\"atsstra{\ss}e 10, 78464 Konstanz, Germany.}
\email{stefan.volkwein@uni-konstanz.de}
\date{\today}
\begin{abstract} 
    We analyze a pointwise tracking multiobjective optimal control problem subject to the Poisson problem and bilateral control constraints.
    To approximate Pareto optimal points and the Pareto front numerically, we consider two different finite element-based scalarization techniques, namely the weighted-sum method and the reference point method, where in both methods many scalar-constrained optimization problems have to be solved.
    We prove a priori error estimates for both scalarizations.
    The underlying subproblems of either method are solved with a Barzilai-Borwein gradient method.
    Numerical experiments illustrate the accuracy of the proposed method.
\end{abstract}
%
%\begin{resume} ... \end{resume}
%
\subjclass[2020]{Primary 
49M25,		   % Discrete approximations
58E17,         % Pareto optimality, etc., applications to economics
65N15,         % Error bounds 
65N30,         % Finite elements, Rayleigh-Ritz and Galerkin methods, finite methods 
90C29.         % Multi-objective and goal programming 
}
\keywords{Multiobjective optimization of elliptic equations, pointwise tracking problems, scalarization methods, finite element method, a priori error estimates.}
\maketitle
%%-----------------------------
%%      your text
%%-----------------------------

%%%%%%%%%%%%%%%%%%%
\section{Introduction}
%%%%%%%%%%%%%%%%%%%

Multiobjective optimization (see, e.g., \cite{MR2143243,Mie98}) plays an increasingly important role in modern applications, where several criteria are often of equal importance. The task in multiobjective optimization and multiobjective optimal control is therefore to compute the set of optimal compromises (the Pareto set) between the conflicting objectives. The advances in algorithms and the increasing interest in Pareto optimal solutions have led to a wide range of new applications related to optimal and feedback control -- potentially with non-smoothness both on the level of the objectives or in the system dynamics; cf. \cite{BDPV17,MR4491072,SGMPV24}, for instance. This results in new challenges such as dealing with expensive models (e.g., governed by partial differential equations (PDEs)) and developing dedicated algorithms handling the non-smoothness. Since in contrast to single-objective optimization, the Pareto set generally consists of an infinite number of optimal solutions, the computational effort can quickly become challenging, which is particularly problematic when the objectives are costly to evaluate or when a solution has to be presented very quickly.

This article deals with the multiobjective optimal control of a pointwise tracking problem. Let $\Omega\subset\mathbb{R}^d$, with $d\in \{2, 3\}$, be an open, bounded and convex polytope with boundary $\partial\Omega$. We set $\U=L^2(\Omega)$ and $V=H^1_0(\Omega)$ supplied with their common inner products
\begin{align*}
    (\varphi,\psi)_{\U}=\int_\Omega\varphi\psi\dx\text{ for }\varphi,\psi\in\U\quad\text{and}\quad (\varphi,\psi)_V=\int_\Omega\nabla\varphi\cdot\nabla\psi\dx \text{ for }\varphi,\psi\in V.   
\end{align*}
The state and control Hilbert spaces are given by $\Y=V\cap H^2(\Omega)$ and $\U$, respectively, equipped with their usual topologies. Moreover, $\Omega_1=\{\bx^1_1,\ldots,\bx^{n_1}_1\}$ and $\Omega_2=\{\bx^1_2,\ldots,\bx^{n_2}_2\}$ are given two (non-empty) finite sets in $\Omega$, where $\Omega_1\cap\Omega_2= \emptyset$ is allowed. At the points in $\Omega_1$ and $\Omega_2$ we are given the desired states $y^i_k\in\R$ for $1\le i\le n_k$ and $1\le k\le 2$.

In this work, we will be interested in the study of the following bicriterial optimal control problem:
\begin{equation}
    \label{eq:multi_obj_ocp}
    \tag{$\mathbf P$}
    \min\J(y,u)\quad\text{subject to (s.t.)}\quad(y,u)\in\Y\times\Uad\text{ satisfies }-\Delta y=u\text{ in } \Omega, \quad y=0\text{ on }\partial\Omega,
\end{equation}
where the objective $\J:\Y\times\Uad\to\R^2$ is given as
\begin{align}
    \label{def:cost_funct}
    \J(y,u)=
    \begin{pmatrix}
        \J_{1}(y,u)\\
        \J_{2}(y,u)
    \end{pmatrix}
    \colonequals 
    \frac{1}{2}
    \begin{pmatrix}
    \sum\limits_{i=1}^{n_1}\left(y(\bx^i_1) - y_1^i\right)^2 +\lambda_{1}\,{\|u\|}_\U^{2}\\[1mm]
    \sum\limits_{i=1}^{n_2}\left(y(\bx^i_2) - y^i_2\right)^2 + \lambda_{2}\,{\|u\|}_\U^{2}
\end{pmatrix},
\end{align}
with $\lambda_1,\lambda_2>0$ and the non-empty, closed, bounded, and convex set of admissible controls is defined by
\begin{equation}
    \label{def:box_cons}
    \Uad\colonequals \{u \in\U\,:\,\ua(\bx) \le u(\bx) \le\ub(\bx) ~\text{ for almost all (f.a.a.) }\bx\in \Omega \}
\end{equation}
with $\ua,\ub\in\U$ satisfying $\ua\le\ub$ in $\Omega$ almost everywhere (a.e.).

Error estimates for finite element (FE) discretizations of a \emph{scalar-valued} version of the problem \eqref{eq:multi_obj_ocp} ($\J(y,u)=\J_{1}(y,u)\in\mathbb{R}$) have been previously derived in a number of works. 
For such a problem, in \cite{MR3449612}, the authors obtained a priori and a posteriori error estimates using continuous piecewise linear functions to approximate the optimal state and adjoint state variables and the so-called variational discretization for the optimal control variable.
The a priori estimate derived for the approximation error of the control behaves as $\mathcal{O}(h^{2-d/2})$ \cite[Theorem 3.2]{MR3449612}. 
Later, the authors of \cite{MR3523574} proposed a fully discrete scheme that approximates the optimal state, adjoint state, and control variables with continuous piecewise linear functions and proved, in two dimensions, a $\mathcal{O}(h)$ convergence rate for the approximation error of the control variable \cite[Theorem 5.1]{MR3523574}; error estimates for the variational discretization scheme were also analyzed \cite[Theorem 5.2]{MR3523574}. 
In \cite{MR3800041}, based on the theory of Muckenhoupt weights and weighted Sobolev spaces, the authors analyzed a numerical scheme that discretizes the control variable with piecewise constant functions and the state and adjoint state variables with continuous piecewise linear functions.
In this work, for the error approximation of the optimal control variable, the authors proved in two dimensions a nearly optimal convergence rate \cite[Theorem 4.3]{MR3800041} and in three dimensions a suboptimal convergence rate $\mathcal{O}(h^{1/2}|\log h|)$. 
This was later improved to $\mathcal{O}(h|\log h|)$ in \cite[Theorem 6.6]{MR3973329}. 
In addition, the authors of \cite{MR3973329} provide, in two dimensions, a $\mathcal{O}(h^2|\log h|^2)$ error estimate for the variational discretization \cite[Theorem 7.5]{MR3973329} and a post-processing scheme \cite[Theorem 7.12]{MR3973329}. 
For appropriate extensions of the aforementioned results to problems involving the Stokes system, a semilinear elliptic equation, and the stationary Navier-Stokes equations, the interested reader is referred to \cite{MR4438718,MR4013930,MR4218114,2023arXiv230914511F,MR4304887}.

In contrast to the previous advances, the analysis of approximation techniques for multiobjective optimal control problems is relatively scarce, with only a few works addressing it; see \cite{Ban17,Ban21}.
To our knowledge, this is the first work that considers approximation techniques for a pointwise tracking multiobjective optimal control problem.
This problem may be relevant in applications, where the state observations are performed at specific locations of the domain. One of the main challenges that arises by considering these observations in the cost function $\J$ is that the corresponding adjoint state variables $p_{k}$ ($k\in\{1,2\}$) possess reduced regularity properties: $p_{k} \in W_0^{1,\mathsf{p}}(\Omega) \setminus (V \cap C(\overline{\Omega}))$, where $\mathsf{p}<\nicefrac{d}{(d-1)}$.
This makes the analysis of a priori error estimates more involved.

We present the notion of \emph{Pareto optimality} and investigate two parameter-dependent methods to derive the set of all Pareto optimal points -- the so-called \emph{Pareto front} -- namely, the weighted-sum method (see, e.g., \cite[Chapter 3]{MR2143243} and \cite[pp. 78-85]{Mie98}) and the reference point method (see, e.g., \cite[pp.~164-170]{Mie98} and \cite{RBWSZ09}).
For each one of the underlying scalar problems associated with these scalarization methods, we derive the existence of a unique solution and first-order optimality conditions.
Moreover, we prove that solutions of these scalar problems are Pareto optimal for the multiobjective optimal control problem \eqref{eq:multi_obj_ocp}.
We consider a FE discretization for each scalar problem and approximate the solution to the state and adjoint equations with continuous piecewise linear functions, whereas the control variable is approximated with piecewise constant functions.
The main contribution of the present work is the proof of optimal ($d=2$) and quasi-optimal ($d=3$) a priori error estimates for the scalar problems associated with the weighted-sum method and the reference point method.

The paper is organized in the following manner:
In the remainder of this section, we introduce the notation that we will use throughout the manuscript.
In Section \ref{sec:MO-OCP}, we present a weak formulation for the bicriterial optimization problem \eqref{eq:multi_obj_ocp} and recall notions of Pareto optimality and Pareto stationarity.
Moreover, we present the weighted-sum method and the reference point method and explain how these scalarization methods can be used to characterize the Pareto stationary front. 
Suitable FE discretizations for the weighted-sum method and the reference point method are proposed in Section \ref{sec:FEM}; a priori error estimates are also proved.
Numerical experiments are presented in Section \ref{sec:num_exp}.

%%%%%%%%%%%%%%%%%%%

\textbf{Notation.} Throughout this work we use standard notation for Lebesgue and Sobolev spaces; cf. \cite{Eva08}, for instance. 
Let $\mathcal{X}$ and $\mathcal{Y}$ be Banach function spaces, we write $\mathcal{X}\hookrightarrow \mathcal{Y}$ to denote that $\mathcal{X}$ is continuously embedded in $\mathcal{Y}$. 
The norm of $\mathcal{X}$ is denoted by $\| \cdot \|_{\mathcal{X}}$. Given $p \in (1,\infty)$, we denote by $q$ its Hölder conjugate, that is, the real number such that $\nicefrac{1}{p}+\nicefrac{1}{q}=1$. In this work, vector inequalities must be understood componentwise.
More precisely, given $n\in \N$ and $\mathfrak{a},\mathfrak{b}\in \R^n$, we write $\mathfrak{a}\leq \mathfrak{b}$ if $\mathfrak{a}_{i} \leq \mathfrak{b}_{i}$ for all $i=1,\ldots,n$; similarly $\mathfrak{a}\geq \mathfrak{b}$.
We define the sets $\R^n_{\ge \mathfrak{a}}\colonequals\{ \mathfrak{b}\in\R^n: \mathfrak{b} \geq \mathfrak{a}\}$ and $\R^{n}_{\le \mathfrak{a}}\colonequals\{ \mathfrak{b}\in \mathbb{R}^{n} : \mathfrak{b} \le\mathfrak{a}\}$.
For convenience, we shall write $\R^{n}_{\le}\colonequals\R^{n}_{\le 0}$ and $\R^{n}_{\ge}\colonequals\R^{n}_{\ge 0}$. The relation $a \lesssim b$ indicates that $a \leq C b$, with a constant $C>0$ that depends neither on $a$, $b$ nor on the discretization parameters. The value of $C$ may change at each occurrence.

%%%%%%%%%%%%%%%%%%%
\section{The optimal control problem}\label{sec:MO-OCP}
%%%%%%%%%%%%%%%%%%%

In this section, we recall basic facts for the problem \eqref{eq:multi_obj_ocp}. 
We start by briefly studying the state equation and its solution.

%%%%%%%%%%%%%%%%%%%
\subsection{The state equation}
%%%%%%%%%%%%%%%%%%%

For $u\in\U$, we consider the following weak formulation for the state equation: Find $y\in V$ such that
\begin{equation}
    \label{eq:weak_Poisson}
    (y, \varphi)_V= (u,\varphi)_\U \quad \text{for all }\varphi\in V. 
\end{equation}
The Lax-Milgram lemma directly yields the existence of a unique solution $y\in V$ to \eqref{eq:weak_Poisson}. 
Moreover, the convexity of $\Omega$ implies that $y\in \Y$ holds; see, e.g., \cite[Theorems 3.2.1.2 and 4.3.1.4]{MR775683} for $d=2$ and \cite[Theorems 3.2.1.2]{MR775683} and \cite[Section 4.3.1]{MR2641539} for $d=3$.
Due to the compact embedding $H^{2}(\Omega)\hookrightarrow C(\overline{\Omega})$ \cite[Theorem 6.3, Part III]{MR2424078}, it follows that $y\in C(\overline{\Omega})$ holds. Hence, a point evaluation of $y\in\Y$ is meaningful.

%%%%%%%%%%%%%%%%%%%
\subsection{Weak formulation} 
%%%%%%%%%%%%%%%%%%%

We begin by defining the control-to-state map $S: \U\ni u \mapsto y \in \Y$, where $y=Su$ is the unique solution to \eqref{eq:weak_Poisson}.  
With $S$ at hand, we introduce the bicriterial reduced cost function $j: \U\to\R^2$ as $j(u)\colonequals\J(Su,u)$, that is,
\begin{equation}
    \label{def:reduced_cost}
    j(u)=
    \begin{pmatrix}
        j_1(u)\\
        j_2(u)
    \end{pmatrix}
    =
    \begin{pmatrix}
        \J_1(Su,u)\\
        \J_2(Su,u)
    \end{pmatrix}
    =\frac{1}{2}
    \begin{pmatrix}
        \sum\limits_{i=1}^{n_1}\left((Su)(\bx^i_1) - y_1^i\right)^2+\lambda_1\,{\|u\|}_\U^{2}\\
        \sum\limits_{i=1}^{n_2}\left((Su)(\bx^i_2) - y_2^i\right)^2 + \lambda_2\,{\|u\|}_\U^{2}
    \end{pmatrix}.
\end{equation}
We immediately note that $(Su)(\bx^i_k)=y(\bx^i_k)$ is meaningful for $1\le i\le n_k$ and $k\in\{1,2\}$. Hence, the objective $j$ is also well-defined. The associated reduced optimal control problem reads:
\begin{equation}
    \label{eq:weak_funct}
    \tag{$\hat{\mathbf P}$}
    \min j(u)\quad\text{s.t.}\quad u\in \Uad.
\end{equation}
Problem \eqref{eq:weak_funct} involves a vector-valued objective, which makes it necessary to introduce the concepts of \emph{order relation} and \emph{Pareto optimality} (see, e.g., \cite{MR2143243}): The point $\bar u\in\Uad$ is \emph{Pareto} optimal for \eqref{eq:weak_funct} if, for each $k\in\{1,2\}$, there is no other control $u\in\Uad\setminus \{\bar u\}$ satisfying $j_k(u)\leq j_k(\bar u)$ and $j_l(u)<j_l(\bar u)$ for at least one $l\in \{1,2\}$. We also define the \emph{Pareto set} and \emph{Pareto front} as
\begin{align*}
    \mathcal{P}_s\colonequals \{u\in\Uad\,:\,u \text{ is Pareto optimal}\}\subset\U   
    \quad\text{and}\quad
    \mathcal{P}_f\colonequals j(\mathcal{P}_s) \subset \R^2,
\end{align*}
respectively.

Note that both $j_1$ and $j_2$ in \eqref{def:reduced_cost} are bounded from below and strictly convex.
Therefore, similar arguments to those provided in the proof of \cite[Theorem 2.14]{Troltzsch} yield that the \emph{ideal vector} $\mathsf{v}^{\mathrm{id}}=(\mathsf{v}_{1}^{\mathrm{id}},\mathsf{v}_2^{\mathrm{id}})$ given as 
\begin{equation}
    \label{def:ideal_vector}
    \vida\colonequals \min\{j_1(u)\,:\,u\in\Uad\}\quad\text{and}\quad\vidb\colonequals \min\{j_2(u)\,:\,u\in\Uad\}
\end{equation}
is well-defined.

We now present an auxiliary result that establishes differentiability properties of $j_1$ and $j_2$.

\begin{proposition}[Differentiability of $j_1$ and $j_2$]
    \label{prop:diff_j1_j2}
    Let $u\in\U$. The objective functions $j_1$ and $j_2$, defined in \eqref{def:reduced_cost}, are Fr\'echet differentiable. Their derivatives at $u$ are given as
    \begin{align*}
        &j_k^{\prime}(u)w= \sum_{i=1}^{n_k}\left(\left((Su)(\bx_k^i)-y_k^i\right)(Sw)(\bx_k^i)\right)+ \lambda_k\,(u,w)_\U \quad\text{for any }w\in\U\text{ and for }k\in\{1,2\}.
    \end{align*}
\end{proposition}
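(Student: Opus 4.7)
The plan is to verify Fréchet differentiability directly from the definition, exploiting the fact that both $j_1$ and $j_2$ are compositions of smooth (in fact polynomial) maps on finite-dimensional data with linear continuous operators on $\U$. The only subtlety is that the functional involves point evaluations, so I must justify that $u\mapsto (Su)(\bx_k^i)$ is a bounded linear functional on $\U$.

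\textbf{Step 1 (point evaluations are bounded).} Recall from the discussion of the state equation that $S:\U\to \Y$ is linear, and by $\Y$-regularity together with the Lax--Milgram estimate we have $\|Su\|_{\Y}\lesssim\|u\|_{\U}$. The embedding $H^{2}(\Omega)\hookrightarrow C(\overline{\Omega})$ then gives $|(Su)(\bx)|\le \|Su\|_{C(\overline{\Omega})}\lesssim \|u\|_{\U}$ for every $\bx\in\overline{\Omega}$. Hence, for each $k\in\{1,2\}$ and $1\le i\le n_k$, the map $\U\ni u\mapsto (Su)(\bx_k^i)\in\R$ is linear and bounded.

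\textbf{Step 2 (expansion of the difference quotient).} Fix $u\in\U$ and take an arbitrary $w\in\U$. Using linearity of $S$, for each $i$ we have
\begin{align*}
\tfrac{1}{2}\bigl((S(u+w))(\bx_k^i)-y_k^i\bigr)^2-\tfrac{1}{2}\bigl((Su)(\bx_k^i)-y_k^i\bigr)^2
=\bigl((Su)(\bx_k^i)-y_k^i\bigr)(Sw)(\bx_k^i)+\tfrac{1}{2}\bigl((Sw)(\bx_k^i)\bigr)^2,
\end{align*}
while for the Tikhonov term $\tfrac{\lambda_k}{2}\|u+w\|_\U^2-\tfrac{\lambda_k}{2}\|u\|_\U^2=\lambda_k(u,w)_\U+\tfrac{\lambda_k}{2}\|w\|_\U^2$. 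Summing over $i=1,\dots,n_k$ identifies the candidate derivative as the expression in the statement, with remainder
\begin{align*}
r_k(u;w)=\tfrac{1}{2}\sum_{i=1}^{n_k}\bigl((Sw)(\bx_k^i)\bigr)^2+\tfrac{\lambda_k}{2}\|w\|_\U^2.
\end{align*}

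\textbf{Step 3 (remainder estimate and linearity).} Using Step~1, we get $|r_k(u;w)|\lesssim n_k\|w\|_\U^2+\lambda_k\|w\|_\U^2$, so $|r_k(u;w)|/\|w\|_\U\to 0$ as $\|w\|_\U\to 0$. Linearity of the candidate derivative in $w$ is clear from linearity of $S$ and of the $\U$-inner product, and its continuity follows from the estimates in Step~1 together with Cauchy--Schwarz on $(u,w)_\U$. This establishes Fréchet differentiability with the claimed derivative.

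The main (and essentially only) technical point is ensuring that the pointwise evaluations $(Su)(\bx_k^i)$ define continuous linear functionals on $\U$; once the $\Y$-regularity of the state and the embedding $\Y\hookrightarrow C(\overline{\Omega})$ are invoked, the rest of the argument reduces to expanding a quadratic and identifying a linear term with a quadratic remainder, both of which are routine.
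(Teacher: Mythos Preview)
Your proof is correct and follows essentially the same approach as the paper: the paper's own proof simply notes that Fr\'echet differentiability is an immediate consequence of the linearity and boundedness of $S$ and that the derivative formula follows from straightforward computation, and your Steps~1--3 are precisely a detailed expansion of that argument.
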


\begin{proof}
    The fact that $j_1$ and $j_2$ are Fr\'echet differentiable is an immediate consequence of the linearity and boundedness of the control-to-state map $S$.
    The characterization of their derivatives is obtained by straightforward computations.
\end{proof}

Before we continue with our analysis, we introduce, given $u\in\Uad$ and $k\in\{1,2\}$, the \emph{adjoint state} variable $p_k\in W_0^{1,\mathsf{p}}(\Omega)$ for any $\mathsf{p}\in (1,\nicefrac{d}{d - 1})$ as the unique solution to the variational problem
\begin{align}
    \label{eq:adjoint_eq}
    (\varphi, p_k)_V= \sum_{i=1}^{n_k}\left(y(\bx_k^i)-y_k^i\right)\varphi(\bx_k^i) \quad \text{for all }\varphi\in W_0^{1,\mathsf{q}}(\Omega),
\end{align}
where $\nicefrac{1}{\mathsf{p}}+\nicefrac{1}{\mathsf{q}}=1$ and $y=Su$ hold. Note that $p_k$ is not necessarily in $V$. However, the inner product $(\varphi, p_k)_V$ is well defined for $\varphi\in W_0^{1,\mathsf{q}}(\Omega)$ and $p_k\in W_0^{1,\mathsf{p}}(\Omega)$.
The right-hand side in \eqref{eq:adjoint_eq} is well defined, as $\varphi\in W_0^{1,\mathsf{q}}(\Omega)\hookrightarrow C(\overline{\Omega})$ is satisfied for $\mathsf q>d$ \cite[Theorem 6.3, Part III]{MR2424078}.
Well-posedness of the problem \eqref{eq:adjoint_eq} follows from \cite[Theorem 2]{MR812624}.

\begin{lemma}[Auxiliary result]
    \label{lemma:aux_res_adj}
    Let $u\in\U$, $y=Su$, and $p_k\in W_0^{1,\mathsf{p}}(\Omega)$ for any $\mathsf{p}\in (\nicefrac{2d}{2 + d},\nicefrac{d}{d - 1})$ be the unique solution to \eqref{eq:adjoint_eq} for $k\in\{1,2\}$. Then,
    \begin{align}
        \label{eq:identity_adjoint}
        (p_k,w)_{\U}=\sum_{i=1}^{n_k}\left(y(\bx_k^i)-y_k^i\right)(Sw)(\bx_k^i)\quad \text{for all }w\in \U.
    \end{align}
\end{lemma}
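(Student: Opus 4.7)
The plan is to use the state $z := Sw \in \Y = V \cap H^2(\Omega)$ associated with $w \in \U$ as test function in the adjoint equation \eqref{eq:adjoint_eq}, and then to identify the resulting $V$-inner product $(z,p_k)_V$ with the $\U$-inner product $(p_k,w)_\U$ by integrating by parts in the strong form $-\Delta z = w$, which is available because $z \in H^2(\Omega)$.

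First I would verify admissibility of $\varphi = z$ in \eqref{eq:adjoint_eq}. Since $z \in H^2(\Omega)$, the Sobolev embedding $H^1(\Omega) \hookrightarrow L^{\mathsf q}(\Omega;\R^d)$ (any $\mathsf q < \infty$ for $d=2$, and $\mathsf q \le \nicefrac{2d}{d-2}$ for $d=3$) yields $\nabla z \in L^{\mathsf q}(\Omega;\R^d)$; note that the lower bound $\mathsf p > \nicefrac{2d}{d+2}$ assumed in the statement is exactly equivalent by H\"older conjugacy to the corresponding restriction on $\mathsf q$. The zero boundary values are inherited from $z \in V$, so $z \in W^{1,\mathsf q}_0(\Omega)$. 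Testing \eqref{eq:adjoint_eq} with $\varphi = z$ then gives
\[
(z,p_k)_V \;=\; \sum_{i=1}^{n_k} \bigl( y(\bx_k^i) - y_k^i \bigr)\,(Sw)(\bx_k^i),
\]
which already matches the right-hand side of \eqref{eq:identity_adjoint}, so the remaining task is to prove $(z,p_k)_V = (p_k,w)_\U$.

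To this end I would integrate by parts against the strong PDE $-\Delta z = w \in L^2(\Omega)$. Formally, the chain
\[
(z,p_k)_V \;=\; \int_\Omega \nabla z \cdot \nabla p_k \dx \;=\; -\int_\Omega (\Delta z)\, p_k \dx \;=\; \int_\Omega w\, p_k \dx \;=\; (p_k,w)_\U
\]
holds, the boundary contribution vanishing because $p_k$ has zero trace in $W^{1,\mathsf p}(\Omega)$. To rigorously handle the sub-$H^1$ regularity of $p_k$, I would approximate $p_k$ in $W^{1,\mathsf p}_0(\Omega)$ by a sequence $\{p_k^n\} \subset C^\infty_c(\Omega)$ and pass to the limit: the first identity in the chain is stable because $\nabla z \in L^{\mathsf q}(\Omega;\R^d)$ pairs by H\"older with $\nabla p_k^n \to \nabla p_k$ in $L^{\mathsf p}(\Omega;\R^d)$, while the middle identity uses that $\Delta z \in L^2(\Omega)$ pairs with $p_k^n \to p_k$ in $L^2(\Omega)$, the latter convergence coming from the embedding $W^{1,\mathsf p}(\Omega) \hookrightarrow L^2(\Omega)$ which again is precisely the lower bound $\mathsf p > \nicefrac{2d}{d+2}$.

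The main obstacle is exactly this integration-by-parts step: because $p_k$ lies in $W^{1,\mathsf p}_0(\Omega) \setminus V$, neither the boundary term nor the interior pairing is automatic, and one must verify that both H\"older pairings arising in the limit are integrable. The virtue of the assumed range $\mathsf p \in (\nicefrac{2d}{d+2}, \nicefrac{d}{d-1})$ is that it simultaneously puts $p_k$ into $L^2(\Omega)$ and $\nabla z$ into $L^{\mathsf q}(\Omega;\R^d)$, closing both gaps at once and allowing the density argument to go through.
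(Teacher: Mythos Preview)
Your proposal is correct and follows essentially the same route as the paper: test the adjoint equation with $Sw\in W_0^{1,\mathsf q}(\Omega)$, then justify $(Sw,p_k)_V=(w,p_k)_\U$ by approximating $p_k$ with $p_k^n\in C_c^\infty(\Omega)$ in $W_0^{1,\mathsf p}(\Omega)$ and passing to the limit using $W_0^{1,\mathsf p}(\Omega)\hookrightarrow L^2(\Omega)$. The only cosmetic difference is that the paper writes the intermediate identity $(Sw,p_k^n)_V=(w,p_k^n)_\U$ directly from the weak Poisson equation \eqref{eq:weak_Poisson} with $\varphi=p_k^n$, whereas you phrase it as integration by parts against the strong form $-\Delta z=w$; for smooth $p_k^n$ these are identical.
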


\begin{proof}
   Let $k\in\{1,2\}$ be chosen. For $\mathsf{q} \in (d,\nicefrac{2d}{d-2})$ we have $\Y\hookrightarrow W_{0}^{1,\mathsf{q}}(\Omega)$; see, e.g., \cite[Theorem 4.12]{MR2424078}. Thus, it follows that $Sw\in W_{0}^{1,\mathsf{q}}(\Omega)$ for every $w\in \U$ so that we can replace $\varphi=Sw$ in \eqref{eq:adjoint_eq} to obtain 
    \begin{align}
        \label{eq:identity_point_varphi}
        (Sw, p_k)_V 
        =
        \sum_{i=1}^{n_k} \left(y(\bx_k^i) - y_k^i\right)(Sw)(\bx_k^i).
    \end{align}
    Notice that $p_k\not\in V$, so that we cannot choose $\varphi=p_k$ in \eqref{eq:weak_Poisson}. Let $\{p_k^n\}_{n\in \N}\subset C_0^{\infty}(\Omega)$ such that $p_k^n\to p_k$ in $W_0^{1,\mathsf{p}}(\Omega)$ with $\mathsf{p}<\nicefrac{d}{d-1}$. Set, for $n\in\N$, $\varphi=p_k^n$ and $u=w$ in \eqref{eq:weak_Poisson}. 
    This results in
    \begin{equation}
        \label{eq:identity_adj_seq}
        (Sw, p_k^n)_{V}=(w,p_k^n)_{\U}. 
    \end{equation}
    Since $p_k^n\to p_k$ in $W_0^{1,\mathsf{p}}(\Omega)\hookrightarrow \U$, we observe that $|(Sw, p_k^n - p_k)_{V}| \to 0$ and $|(w,p_k^n - p_k)_{\U}| \to 0$ as $n\to \infty$.
Hence, passing to the limit in \eqref{eq:identity_adj_seq} yields $(Sw, p_k)_{V} = (w, p_k)_{\U}$, which concludes the proof.
\end{proof}

%%%%%%%%%%%%%%%%%%%
\subsection{Scalarization techniques}\label{sec:scala_tech}
%%%%%%%%%%%%%%%%%%%

The search of Pareto optimal points will be carried out using two methods: the weighted-sum method and the (Euclidean) reference point method (see, e.g., \cite[Section 4.3]{MR3950723} and \cite[Section 3]{MR3609766}, respectively).
In what follows, we describe both methods and present first-order optimality conditions for their corresponding underlying scalar problems.

%%%%%%%%%%%%%%%%%%%
\subsubsection{Weighted-sum method (WSM)}\label{sec:WSM}
%%%%%%%%%%%%%%%%%%%

For weights $\alpha_{1}, \alpha_{2} > 0$ with $\alpha_1 + \alpha_2 = 1$, we set $\alpha\colonequals(\alpha_{1},\alpha_{2})$ and define the (scalar) function
\begin{align*}
    W_{\alpha}(u)\colonequals\alpha_{1}j_{1}(u)+\alpha_2 j_2(u)\quad\text{for every }u\in \U.
\end{align*}
Then, the scalar-valued problem
\begin{equation}
    \label{eq:WSM_problem}
    \tag{$\mathbf{\hat P}_\alpha$}
    \min W_{\alpha}(u)\quad\text{s.t.}\quad u\in\Uad
\end{equation}
is considered.
The latter is denoted as the weighted-sum problem (with positive weights $\alpha_{1}$ and $\alpha_{2}$) corresponding to \eqref{eq:weak_funct}. The weighted-sum method is based on solving problem \eqref{eq:WSM_problem} for varying $\alpha$. 

In the next result, we prove the existence of solutions for \eqref{eq:WSM_problem}.

\begin{proposition}[Existence of a solution (WSM)]
    \label{Prop:WSM}
    Let $\alpha =(\alpha_{1},\alpha_{2})\in \mathbb{R}^2_{>}$ such that $\alpha_1+\alpha_2=1$. Then, the scalar-valued optimal control problem \eqref{eq:WSM_problem} has a unique solution $\bar u_\alpha \in \Uad$, which is Pareto optimal for problem \eqref{eq:weak_funct}.
\end{proposition}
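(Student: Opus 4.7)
The plan is to split the statement into two claims: existence and uniqueness of a minimizer for \eqref{eq:WSM_problem}, and Pareto optimality of this minimizer for \eqref{eq:weak_funct}. I would handle them separately and in this order.

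For existence, I would apply the direct method of the calculus of variations. The set $\Uad$ is nonempty, bounded, closed, and convex in $\U = L^2(\Omega)$, and hence weakly sequentially compact. The functionals $j_1$ and $j_2$ are continuous and convex on $\U$ (the pointwise-evaluation terms are convex quadratic in $u$ via the bounded linear map $S$ and the continuous embedding $\Y \hookrightarrow C(\closure\Omega)$, and the Tikhonov terms $\tfrac{\lambda_k}{2}\|u\|_\U^2$ are strictly convex), hence weakly lower semicontinuous. Since $\alpha_1,\alpha_2>0$, the weighted sum $W_\alpha$ inherits these properties and is in addition coercive on $\U$ (actually the boundedness of $\Uad$ alone suffices). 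Thus, a minimizing sequence $\{u_n\}\subset\Uad$ admits a weakly convergent subsequence with limit $\bar u_\alpha\in\Uad$, and weak lower semicontinuity yields that $\bar u_\alpha$ is a minimizer.

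For uniqueness, I would observe that each $j_k$ is strictly convex because the regularization term $\tfrac{\lambda_k}{2}\|\cdot\|_\U^2$ with $\lambda_k>0$ is strictly convex, and the tracking term is convex; a positive combination of a strictly convex function with a convex function is strictly convex. Hence $W_\alpha$ is strictly convex on the convex set $\Uad$, so two distinct minimizers would contradict strict convexity along the segment joining them.

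For Pareto optimality, I would argue by contradiction in the standard way. Suppose $\bar u_\alpha$ is not Pareto optimal for \eqref{eq:weak_funct}. Then there exists $u\in\Uad\setminus\{\bar u_\alpha\}$ with $j_k(u)\le j_k(\bar u_\alpha)$ for all $k\in\{1,2\}$ and strict inequality for at least one index. Multiplying by the strictly positive weights $\alpha_1,\alpha_2$ and adding, one gets
\begin{equation*}
W_\alpha(u)=\alpha_1 j_1(u)+\alpha_2 j_2(u)<\alpha_1 j_1(\bar u_\alpha)+\alpha_2 j_2(\bar u_\alpha)=W_\alpha(\bar u_\alpha),
\end{equation*}
contradicting the fact that $\bar u_\alpha$ solves \eqref{eq:WSM_problem}. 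I do not anticipate any real obstacle here; the only subtle point is invoking the right regularity to justify that the pointwise-evaluation functionals $u\mapsto ((Su)(\bx_k^i)-y_k^i)^2$ are continuous and convex on $\U$, which is granted by Proposition \ref{prop:diff_j1_j2} together with the embedding $\Y\hookrightarrow C(\closure\Omega)$ and the boundedness of $S$. The positivity of both weights is essential in the Pareto step and should be emphasized.
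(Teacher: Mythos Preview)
Your proposal is correct and follows essentially the same approach as the paper: existence and uniqueness via weak sequential compactness of $\Uad$ together with weak lower semicontinuity and strict convexity of $j_1,j_2$ (hence of $W_\alpha$), and Pareto optimality by the standard contradiction argument that a dominating point would yield $W_\alpha(\hat u)<W_\alpha(\bar u_\alpha)$ since both weights are positive. The paper's proof is simply a terser version of what you wrote.
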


\begin{proof}
     The fact that $j_{1}$ and $j_2$ are strictly convex and weakly lower semicontinuous, in combination with the fact that $\mathbb{U}_{ad}$ is weakly sequentially compact, yields the existence of a unique optimal control $\bar u_\alpha $ for \eqref{eq:WSM_problem}.

    To prove the second statement of the proposition, we proceed by contradiction and assume that $\bar u_\alpha $ is not Pareto optimal for \eqref{eq:weak_funct}. 
    Then, there exists $\hat{u}\in \mathbb{U}_{ad}$ satisfying $j(\hat{u})\leq j(\bar u_\alpha )$ and $j_k(\hat{u}) < j_k(\bar u_\alpha )$ for some $k\in\{1,2\}$.  Hence, $\alpha_{k}j_k(\hat{u}) < \alpha_{k}j_k(\bar u_\alpha )$ ($\alpha_{k}>0$), which implies that $W_{\alpha}(\hat{u}) < W_{\alpha}(\bar u_\alpha )$. This contradicts the optimality of $\bar u_\alpha $ and concludes the proof.
\end{proof}

In view of Proposition \ref{prop:diff_j1_j2} and Lemma \ref{lemma:aux_res_adj}, we infer that
\begin{align*}
    W^{\prime}_{\alpha}(u)w = & \, \sum_{k=1}^2\alpha_k\left(\sum_{i=1}^{n_k}((Su)(\bx_{k}^i) - y_{k}^i)(Sw)(\bx_k^{i}) + \lambda_k(u,w)_{\U}\right) \\
    = & \, \sum_{k=1}^2\alpha_k(p_{k} + \lambda_k u,w)_{\U} \quad \text{for any } w\in \U.
\end{align*}
With this characterization at hand, we state the first-order sufficient optimality conditions for the WSM whose proof follows by standard arguments; see, e.g., \cite[Lemma 2.21]{Troltzsch}.

\begin{theorem}[First-order optimality condition (WSM)]
    Let $\alpha=(\alpha_{1},\alpha_{2})\in \mathbb{R}^2_{>}$ be such that $\alpha_1 + \alpha_2 = 1$. If $\bar u_{\alpha}\in \Uad$ is an optimal solution to \eqref{eq:WSM_problem}, then we have
    \begin{equation}
        \label{eq:var_ineq_WSM}
        \sum_{k=1}^2\alpha_k(\bar{p}_{\alpha,k} + \lambda_k \bar u_\alpha , u - \bar u_\alpha )_{\U} \geq 0 \quad \text{for all }u\in\Uad,
    \end{equation}
    where $\bar{p}_{\alpha,k}$ denotes the unique solution to \eqref{eq:adjoint_eq} with $\bar{y}_{\alpha}=S\bar u_\alpha $.
\end{theorem}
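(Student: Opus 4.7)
The plan is to invoke the standard first-order necessary condition for a Fr\'echet-differentiable functional minimized over a convex set, and then substitute the adjoint-based representation of $W_\alpha'$ that is displayed just before the theorem statement. Since $W_\alpha$ is strictly convex, this necessary condition is also sufficient, which matches the wording of the theorem.

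First I would record that $W_\alpha = \alpha_1 j_1 + \alpha_2 j_2$ is Fr\'echet differentiable on $\U$, which follows immediately from Proposition~\ref{prop:diff_j1_j2} and the linearity of the assignment $(j_1,j_2) \mapsto \alpha_1 j_1 + \alpha_2 j_2$. Next I would exploit the convexity of $\Uad$: for every $u \in \Uad$ and $t \in (0,1]$ the convex combination $\bar u_\alpha + t(u - \bar u_\alpha)$ lies in $\Uad$, so optimality of $\bar u_\alpha$ gives
\[
0 \le \frac{W_\alpha(\bar u_\alpha + t(u - \bar u_\alpha)) - W_\alpha(\bar u_\alpha)}{t}.
\]
Passing to the limit $t \downarrow 0$, which is routine by Fr\'echet differentiability, produces the variational inequality $W_\alpha'(\bar u_\alpha)(u - \bar u_\alpha) \ge 0$ for all $u \in \Uad$.

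To conclude, I would insert the explicit expression for $W_\alpha'$ displayed immediately before the theorem, evaluated at $u = \bar u_\alpha$. By Proposition~\ref{prop:diff_j1_j2} together with Lemma~\ref{lemma:aux_res_adj} applied to $\bar y_\alpha = S\bar u_\alpha$, the tracking contribution $\sum_{i=1}^{n_k}((S\bar u_\alpha)(\bx_k^i) - y_k^i)(Sw)(\bx_k^i)$ coincides with $(\bar p_{\alpha,k}, w)_{\U}$, where $\bar p_{\alpha,k}$ solves \eqref{eq:adjoint_eq} with $y = \bar y_\alpha$. Adding the Tikhonov term $\lambda_k (\bar u_\alpha, w)_{\U}$ and summing against $\alpha_k$ yields precisely the left-hand side of \eqref{eq:var_ineq_WSM}, evaluated at $w = u - \bar u_\alpha$.

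I do not foresee a substantive obstacle: all the analytic content, in particular the duality identity linking the point evaluations of $S w$ at $\{\bx_k^i\}$ with $(\bar p_{\alpha,k}, w)_{\U}$, is already established in Lemma~\ref{lemma:aux_res_adj}, and the low regularity of $\bar p_{\alpha,k}$ is harmless since the variational inequality is tested only by elements of $\U$. The only minor point worth verifying is the interchange of limit and finite summation in the passage $t \downarrow 0$, which is immediate.
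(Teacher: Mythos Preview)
Your proposal is correct and coincides with the paper's approach: the paper does not give a detailed proof but simply invokes the standard first-order necessary condition for a Fr\'echet-differentiable objective over a convex set (citing \cite[Lemma~2.21]{Troltzsch}) after having derived the adjoint-based formula for $W_\alpha'$, which is exactly the argument you outline.
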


Define $\bar{\mathfrak{p}}_{\alpha} = \alpha_1\bar{p}_{\alpha,1} +\alpha_2\bar{p}_{\alpha,2}$ and $\Lambda_{\alpha} = \alpha_1\lambda_1+\alpha_2\lambda_2$.
From \eqref{eq:var_ineq_WSM} (see, e.g., \cite[Theorem 2.28]{Troltzsch}) we infer that 
\begin{align}
\label{eq:projection_control_WSM}
    \bar u_{\alpha}(\bx) = \min\left\{\ub(\bx),\max\left\{\ua(\bx), - \Lambda_{\alpha}^{-1} \bar{\mathfrak{p}}_{\alpha}(\bx)\right\}\right\}\quad \text{f.a.a. } \bx \in \Omega.
\end{align}
We note that $\bar{\mathfrak{p}}_{\alpha}\in W_0^{1,\mathsf{p}}(\Omega)$, for any $\mathsf{p}\in (1,\nicefrac{d}{d - 1})$, solves
\begin{align*}
    (\varphi, \bar{\mathfrak{p}}_{\alpha})_V = \sum_{k=1}^{2}\alpha_{k}\sum_{i=1}^{n_k}\left(\bar{y}_{\alpha}(\bx_k^i)-y_k^i\right)\varphi(\bx_k^i) \quad \text{for all }\varphi\in W_0^{1,\mathsf{q}}(\Omega),
\end{align*}
where $\nicefrac{1}{\mathsf{p}}+\nicefrac{1}{\mathsf{q}}=1$ and $\bar{y}_{\alpha}=S\bar{u}_{\alpha}$ hold.
Hence, if $\ua,\ub\in H^{1}(\Omega)$, then, in view of \eqref{eq:projection_control_WSM} and \cite[Proposition 2.5]{MR3973329} we conclude that $\bar{u}_{\alpha} \in H^{1}(\Omega)$.
Moreover, if $\ua,\ub$ are just real constants, then \cite[Lemma 6.1]{MR3973329} immediately yields $\bar{u}_{\alpha}\in W^{1,\infty}(\Omega)$.

%%%%%%%%%%%%%%%%%%%
\subsubsection{Reference point method (RPM)}\label{sec:RPM}
%%%%%%%%%%%%%%%%%%%

Given a \emph{reference point} $\zeta = (\zeta_1,\zeta_2) \in \mathcal{P}_{f}+\mathbb{R}^2_{\leq}$ we introduce the distance function $R_{\zeta}:\U\to \mathbb{R}$ by
\begin{align*}
   R_{\zeta}(u)\colonequals\frac{1}{2}\,{\|j(u)-\zeta\|}_{\R^2}^2=\frac{1}{2}\left( (j_1(u) - \zeta_1)^2 + (j_2(u) - \zeta_2)^2\right)\quad \text{for }u\in\U,
\end{align*}
which measures the Euclidean distance between $j(u)$ and the reference point $\zeta$.  Moreover, if $\zeta \leq \mathsf{v}^{\mathrm{id}}$ with $\mathsf{v}^{\mathrm{id}}$ defined in \eqref{def:ideal_vector}, then the mapping $R_{\zeta}$ is strictly convex since it involves the composition between a strictly convex nondecreasing function and a strictly convex function.

The idea behind the reference point method is that by finding a point $u$ such that $j(u)$ approximates $\zeta$ as best as possible, we shall obtain a Pareto optimal point for \eqref{eq:weak_funct}. Therefore, we have to solve the (Euclidean) \emph{reference point problem}
\begin{equation}
    \label{eq:ref_point_prob}
    \tag{$\mathbf{\hat P}_\zeta$}
    \min R_{\zeta}(u)\quad\text{s.t.}\quad u\in\Uad,
\end{equation}
which is a scalar-valued minimization problem.

\begin{proposition}[Existence of a solution (RPM)]
    Let $\zeta = (\zeta_1,\zeta_2) \in \mathcal{P}_{f}+\mathbb{R}^2_{\leq}$ such that $\zeta \leq \mathsf{v}^{\mathrm{id}}$. Then, the scalar-valued optimal control problem \eqref{eq:ref_point_prob} has a unique solution $\bar{u}_{\zeta}\in \Uad$, which is Pareto optimal for problem \eqref{eq:weak_funct}.
\end{proposition}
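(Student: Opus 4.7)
The plan is to follow the same two-step pattern used for Proposition \ref{Prop:WSM}: first establish existence and uniqueness via the direct method of the calculus of variations, then derive Pareto optimality by a contradiction argument. For existence and uniqueness, I would observe that $\Uad$ is non-empty, closed, bounded, and convex in the Hilbert space $\U$, hence weakly sequentially compact. The remaining task is to verify that $R_{\zeta}$ is weakly lower semicontinuous and strictly convex on $\Uad$. Weak lower semicontinuity is inherited from $j_1$ and $j_2$ (which are continuous and convex, hence weakly lower semicontinuous) composed with the continuous, nondecreasing map $t\mapsto \tfrac12 (t-\zeta_k)_+^2$ after shifting --- and here the assumption $\zeta\le \mathsf{v}^{\mathrm{id}}$ is what guarantees $j_k(u)-\zeta_k\ge 0$ for every $u\in\Uad$, so that the squaring acts on a nonnegative argument. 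The same observation makes the composition strictly convex, as already noted in the text preceding \eqref{eq:ref_point_prob}. Standard direct method arguments (take a minimizing sequence, extract a weakly convergent subsequence, pass to the limit using weak lower semicontinuity) then yield a minimizer $\bar u_{\zeta}\in\Uad$, and strict convexity gives uniqueness.

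For the Pareto-optimality statement I would argue by contradiction: assume that $\bar u_{\zeta}$ is not Pareto optimal for \eqref{eq:weak_funct}. Then there exists $\hat u\in\Uad$ with $j_l(\hat u)\le j_l(\bar u_{\zeta})$ for $l\in\{1,2\}$ and $j_k(\hat u)<j_k(\bar u_{\zeta})$ for some $k\in\{1,2\}$. Invoking $\zeta\le \mathsf{v}^{\mathrm{id}}$ once more, both $j_l(\hat u)-\zeta_l$ and $j_l(\bar u_{\zeta})-\zeta_l$ lie in $[0,\infty)$, on which $t\mapsto t^2$ is monotone. Hence
\begin{equation*}
    \bigl(j_l(\hat u)-\zeta_l\bigr)^2\le \bigl(j_l(\bar u_{\zeta})-\zeta_l\bigr)^2\text{ for }l\in\{1,2\},
    \quad
    \bigl(j_k(\hat u)-\zeta_k\bigr)^2< \bigl(j_k(\bar u_{\zeta})-\zeta_k\bigr)^2,
\end{equation*}
and summing yields $R_{\zeta}(\hat u)<R_{\zeta}(\bar u_{\zeta})$, contradicting the optimality of $\bar u_{\zeta}$ for \eqref{eq:ref_point_prob}.

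The only subtle point --- and really the place where the hypothesis on $\zeta$ does the work --- is that squaring is order-preserving only on the nonnegative half-line; without $\zeta\le \mathsf{v}^{\mathrm{id}}$ one could imagine $j_k(\hat u)-\zeta_k$ flipping sign and a strict decrease in $j_k$ actually increasing $(j_k-\zeta_k)^2$, which would break the Pareto argument and also ruin strict convexity of $R_{\zeta}$. Once this monotonicity is in hand, both parts of the proposition follow by the arguments sketched above, essentially mirroring the proof of Proposition \ref{Prop:WSM}.
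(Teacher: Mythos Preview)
Your proposal is correct and matches the paper's approach exactly: the paper's proof simply reads ``We can argue as in the proof of Proposition~\ref{Prop:WSM}. For brevity, we skip the details,'' and you have faithfully filled in those details, including the correct identification of where the hypothesis $\zeta\le\mathsf{v}^{\mathrm{id}}$ is used (strict convexity of $R_\zeta$, already noted in the text before \eqref{eq:ref_point_prob}, and monotonicity of squaring in the Pareto contradiction step).
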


\begin{proof}
    We can argue as in the proof of Proposition~\ref{Prop:WSM}. For brevity, we skip the details.
    %The fact that $R_{\zeta}$ is strictly convex and weakly lower semicontinuous, in combination with the fact that $\Uad$ is weakly sequentially compact, yields the existence of a unique optimal control $\bar{u}_{\zeta}$ for \eqref{eq:ref_point_prob}.
    %To prove the second statement of the proposition, we proceed by contradiction and assume that $\bar{u}_{\zeta}$ is not Pareto optimal for \eqref{eq:weak_funct}. Then, there exists $\hat{u}\in\Uad$ satisfying $j(\hat{u})\leq j(\bar{u}_{\zeta})$ and $j_k(\hat{u}) < j_k(\bar{u}_{\zeta})$ for some $k\in\{1,2\}$. Hence, $0\leq j_k(\hat{u}) - \zeta_k < j_k(\bar{u}_{\zeta}) - \zeta_k$, which implies that $R_{\zeta}(\hat{u}) < R_{\zeta}(\bar{u}_{\zeta})$. The latter contradicts the optimality of $\bar{u}_{\zeta}$ and concludes the proof.
\end{proof}

For $u\in\U$ the use of Proposition \ref{prop:diff_j1_j2} and Lemma \ref{lemma:aux_res_adj} reveal that
\begin{align*}
    R^{\prime}_{\zeta}(u)w&=\sum_{k=1}^2(j_k(u) - \zeta_k)\left(\sum_{i=1}^{n_k}(Sw)(\bx_k^i)\left((Su)(\bx_k^i) - y_k^i\right) + \lambda_k(u,w)_\U\right) \\
    &= \sum_{k=1}^2(j_k(u) - \zeta_k)(p_k + \lambda_k u,w)_\U \quad \text{for any } w\in\U,
\end{align*}
from which we derive the following standard result.

\begin{theorem}[First-order optimality condition (RPM)]
    Let $\zeta=(\zeta_{1},\zeta_{2}) \in \mathcal{P}_f + \mathbb{R}_{\leq}^{2}$. If $\bar{u}_{\zeta}\in \Uad$ is an optimal solution to \eqref{eq:ref_point_prob}, then 
    \begin{equation}\label{eq:var_ineq_RPM}
        \sum_{k=1}^{2}(j_{k}(\bar{u}_{\zeta}) - \zeta_{k})(\bar{p}_{\zeta,k} + \lambda_k \bar{u}_{\zeta}, u - \bar{u}_{\zeta})_\U \geq 0 \quad \text{for all }u \in \Uad,
    \end{equation}
    where $\bar{p}_{\zeta,k}$ denotes the unique solution to \eqref{eq:adjoint_eq} with $\bar{y}_{\zeta}=S\bar{u}_{\zeta}$.
\end{theorem}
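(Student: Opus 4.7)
The plan is to derive the variational inequality by the standard argument for convex admissible sets, exploiting the Fr\'echet differentiability of $R_\zeta$ that is already available from Proposition \ref{prop:diff_j1_j2}.

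First I would observe that $R_\zeta$ is Fr\'echet differentiable on $\U$: by Proposition \ref{prop:diff_j1_j2} each $j_k$ is Fr\'echet differentiable, and $R_\zeta = \tfrac{1}{2}\sum_{k=1}^{2}(j_k - \zeta_k)^2$ is a finite sum of smooth scalar functions of $j_k(u)$, so the chain rule gives $R'_\zeta(u)w = \sum_{k=1}^{2}(j_k(u) - \zeta_k)j_k'(u)w$. Inserting the representation $j_k'(u)w = (p_k + \lambda_k u,w)_\U$ obtained via Lemma \ref{lemma:aux_res_adj} yields exactly the formula displayed immediately above the theorem statement.

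Next I would use convexity of $\Uad$. Fix an arbitrary $u \in \Uad$. By convexity, $u_t \colonequals \bar{u}_\zeta + t(u - \bar{u}_\zeta) \in \Uad$ for every $t \in [0,1]$, so optimality of $\bar u_\zeta$ gives $R_\zeta(u_t) \ge R_\zeta(\bar u_\zeta)$. Subtracting, dividing by $t>0$, and letting $t \to 0^{+}$, the definition of the Fr\'echet derivative yields
\begin{equation*}
    R'_\zeta(\bar u_\zeta)(u - \bar u_\zeta) \ge 0.
\end{equation*}
Substituting the explicit expression for $R'_\zeta$ computed in the first step, with $\bar p_{\zeta,k}$ solving \eqref{eq:adjoint_eq} for $\bar y_\zeta = S\bar u_\zeta$, produces the claimed inequality \eqref{eq:var_ineq_RPM}.

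I do not expect a real obstacle: the only point requiring a little care is the justification of the chain rule for $R_\zeta$, since the adjoint states $p_k$ live in $W_0^{1,\mathsf p}(\Omega)\setminus V$; however, Lemma \ref{lemma:aux_res_adj} has already encapsulated this subtlety and gives $j_k'(u)w = (p_k + \lambda_k u,w)_\U$ in a clean $L^2$ pairing, so the derivation of \eqref{eq:var_ineq_RPM} is entirely routine. As the authors note, this is a standard argument in the spirit of \cite[Lemma~2.21]{Troltzsch}.
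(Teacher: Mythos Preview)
Your proposal is correct and follows precisely the approach the paper indicates: compute $R_\zeta'$ via the chain rule using Proposition~\ref{prop:diff_j1_j2} and Lemma~\ref{lemma:aux_res_adj}, then invoke the standard variational-inequality argument on the convex set $\Uad$ (the paper itself gives no explicit proof, simply calling it a ``standard result'' and, for the analogous WSM theorem, pointing to \cite[Lemma~2.21]{Troltzsch}).
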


As in the Weighted-Sum Method (see \eqref{eq:projection_control_WSM}), we obtain a characterization for the optimal control $\bar{u}_{\zeta}$:
\begin{align*}
    \bar u_{\zeta}(\bx) = \min\{\ub,\max\{\ua, - \Lambda_{\zeta}^{-1} \bar{\mathfrak{p}}_{\zeta}(\bx) \}\}   \quad \text{f.a.a. } \bx \in \Omega,
\end{align*}
where
\begin{align*}
    \bar{\mathfrak{p}}_{\zeta} = (j_{1}(\bar{u}_{\zeta}) - \zeta_{1})\bar{p}_{\zeta,1} + (j_{2}(\bar{u}_{\zeta}) - \zeta_{2})\bar{p}_{\zeta,2},\qquad\Lambda_{\zeta} = \lambda_1(j_{1}(\bar{u}_{\zeta}) - \zeta_{1}) + \lambda_2(j_{2}(\bar{u}_{\zeta}) - \zeta_{2}).
\end{align*}
In particular, if $\ua,\ub$ are real constants, then $\bar{u}_{\zeta}\in W^{1,\infty}(\Omega)$ (see \cite[Lemma 6.1]{MR3973329}).

%%%%%%%%%%%%%%%%%%%
\section{The FE approximation}\label{sec:FEM}
%%%%%%%%%%%%%%%%%%%

In this section, we present suitable FE discretizations for the scalarization techniques proposed in Section~\ref{sec:scala_tech}. Moreover, we prove error estimates for the error committed when approximating solutions of the scalarized problems presented in the previous section.

Let us first introduce the discrete setting in which we will operate \cite{MR2373954,MR2050138}.
We denote by $\mathcal{T}_h = \{ T \}$ a conforming partition, or mesh, of $\overline{\Omega}$ into closed simplices $T$ of size $h_T\colonequals\text{diam}(T)$. 
Define $h\colonequals\max_{ T \in \mathcal{T}_h} h_T$. 
We denote by $\mathbb{T} = \{\mathcal{T}_h \}_{h>0}$ a collection of conforming and quasi-uniform meshes $\mathcal{T}_h$. Given a mesh $\mathcal{T}_{h} \in \mathbb{T}$, we define the FE space of continuous piecewise polynomials of degree one that vanish on the boundary as
\begin{equation}
    \label{def:piecewise_linear_set}
    \Vh\colonequals\left\{v^h\in C(\overline{\Omega}): v^h|_T\in \mathbb{P}_{1}(T)\text{ for all }T\in \mathcal{T}_{h}\right\}\cap V
\end{equation}
endowed with the inner product $(\cdot\,,\cdot)_V$. Let us also introduce the space
\begin{equation}
    \label{def:piecewise_constant_set}
    \Uh\colonequals\left\{u^h\in L^{\infty}(\Omega): u^h|_T\in \mathbb{P}_{0}(T)\text{ for all }T\in \mathcal{T}_{h}\right\}\subset\U.
\end{equation}

The FE-Galerkin approximation to \eqref{eq:weak_Poisson} is given as follows: Find $y^h\in \Vh$ such that
\begin{equation}
    \label{eq:discrete_Poisson}
    (y^h,\varphi^h)_V = (u,\varphi^h)_\U \quad \text{for all } \varphi^h\in \Vh. 
\end{equation}
We introduce the discrete control-to-state map $S_h: \U \ni u \mapsto y^h\in \Vh$, where $y^h=S_hu$ is the unique solution to \eqref{eq:discrete_Poisson}. 
The discrete function $j^h:\U\to\R^2$ is defined by
\begin{align}
    \label{def:reduced_cost_discrete}
    j^h(u)=
    \begin{pmatrix}
        j_{1}^{h}(u) \\
        j_{2}^{h}(u)
    \end{pmatrix}
    \colonequals
    \frac{1}{2}
    \begin{pmatrix}
        \sum\limits_{i=1}^{n_1}\left((S_h u)(\bx^i_1) - y_1^i\right)^2+\lambda_1\,{\|u\|}_\U^{2}\\
        \sum\limits_{i=1}^{n_2}\left((S_h u)(\bx^i_2) - y_2^i\right)^2 + \lambda_2\,{\|u\|}_\U^{2}
    \end{pmatrix}.
\end{align}
Finally, given $u\in\U$ and $k\in\{1,2\}$, we define $p_{k}^{h}\in\Vh$ as the unique solution to
\begin{align}
    \label{eq:adjoint_eq_discrete}
    (\varphi^h,p_{k}^{h})_V=\sum_{i=1}^{n_k}\left(y^h(\bx^i_k) - y_k^i\right)\varphi^h(\bx^i_k)\quad\text{for all }\varphi^h\in \Vh,
\end{align}
where $y^h=S_h u$.

In what follows, we shall introduce FE discretizations for the scalar problems \eqref{eq:WSM_problem} and \eqref{eq:ref_point_prob}, introduced in Sections \ref{sec:WSM} and \ref{sec:RPM}, respectively.

%%%%%%%%%%%%%%%%%%%
\subsection{Discretization of WSM}
%%%%%%%%%%%%%%%%%%%

For weights $\alpha_{1}, \alpha_{2} > 0$ with $\alpha_1 + \alpha_2 = 1$, we set $\alpha\colonequals(\alpha_{1},\alpha_{2})$ and define the function
\begin{align*}
    W_{\alpha}^{h}(u)\colonequals\alpha_{1}j_{1}^{h}(u)+ \alpha_2 j_{2}^{h}(u)\quad\text{for }u\in\U.    
\end{align*}
The proposed discrete version of the scalar-valued problem \eqref{eq:WSM_problem} thus reads
\begin{equation}
    \label{eq:WSM_problem_discrete}
    \tag{$\mathbf{\hat P}_{\alpha}^{h}$}
    \min W_{\alpha}^{h}(u^h)\quad\text{s.t.}\quad u^h\in\Uadh.
\end{equation}
As in the continuous case, we obtain a characterization for the derivative of $W_{\alpha}^{h}$:
\begin{align*}
    (W_{\alpha}^{h})^{\prime}(u^h)w^h=  \sum_{k=1}^2\alpha_k(p_{k}^{h} + \lambda_k u^h,w^h)_\U \quad \text{for all }w^h\in \Uh.
\end{align*}

We establish the existence of solutions for \eqref{eq:WSM_problem_discrete} and first-order optimality conditions.

\begin{proposition}[Discrete WSM]\label{prop:discrete_WSM}
    Let $\alpha =(\alpha_{1},\alpha_{2})\in\R^2_{>}$ such that $\alpha_1 + \alpha_2 = 1$. Then, the scalar-valued optimal control problem \eqref{eq:WSM_problem_discrete} has a unique solution $\bar{u}_{\alpha}^{h}\in \Uadh$. Moreover, $\bar{u}_{\alpha}^{h}$ is an optimal solution to \eqref{eq:WSM_problem_discrete} if and only if
    \begin{equation}
        \label{eq:discrete_var_ineq_WSM}
        \sum_{k=1}^2\alpha_k(\bar{p}_{\alpha,k}^{h} + \lambda_k \bar{u}_{\alpha}^{h}, u^h-\bar{u}_{\alpha}^{h})_\U \geq 0 \quad \text{for all }u^h \in \Uadh,
    \end{equation}
    where $\bar{p}_{\alpha,k}^{h}$ denotes the unique solution to \eqref{eq:adjoint_eq_discrete} with $\bar{y}_{\alpha}^{h}=S_h\bar{u}_{\alpha}^{h}$.
\end{proposition}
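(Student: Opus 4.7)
The plan is to mirror the continuous argument in Proposition~\ref{Prop:WSM} and the derivation of \eqref{eq:var_ineq_WSM}, with the simplification that we are in a finite-dimensional setting and that the discrete adjoint states $\bar{p}_{\alpha,k}^{h}$ lie in $\Vh\subset V$, which removes the density step needed in Lemma~\ref{lemma:aux_res_adj}.

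\emph{Step 1 (existence and uniqueness).} First I would observe that $\Uadh$ is a non-empty, closed, bounded and convex subset of the finite-dimensional space $\Uh$, hence compact. Linearity of $S_h$ and boundedness of point evaluations on $\Vh$ imply that $j_1^h$ and $j_2^h$ are continuous, while the terms $\tfrac{1}{2}\lambda_k\|\cdot\|_\U^2$ with $\lambda_k>0$ make each of them strictly convex. Since $\alpha_1,\alpha_2>0$, the convex combination $W_\alpha^h=\alpha_1 j_1^h+\alpha_2 j_2^h$ is continuous and strictly convex, so it attains its infimum on $\Uadh$ at a unique point $\bar u_\alpha^h$.

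\emph{Step 2 (derivative via the discrete adjoint).} Next I would compute $(W_\alpha^h)'(u^h)$. By the chain rule applied to \eqref{def:reduced_cost_discrete}, together with linearity and boundedness of $S_h$ (exactly as in Proposition~\ref{prop:diff_j1_j2}),
\begin{align*}
    (j_k^h)'(u^h)w^h=\sum_{i=1}^{n_k}\bigl((S_hu^h)(\bx_k^i)-y_k^i\bigr)(S_hw^h)(\bx_k^i)+\lambda_k(u^h,w^h)_\U.
\end{align*}
To rewrite the sum of point evaluations as an $\U$-inner product I would use the discrete analog of Lemma~\ref{lemma:aux_res_adj}: since $p_k^h\in\Vh$ we may simply take $\varphi^h=S_hw^h\in\Vh$ in \eqref{eq:adjoint_eq_discrete} and $\varphi^h=p_k^h\in\Vh$ in \eqref{eq:discrete_Poisson} to obtain
\begin{align*}
    (p_k^h,w^h)_\U=(S_hw^h,p_k^h)_V=\sum_{i=1}^{n_k}\bigl(y^h(\bx_k^i)-y_k^i\bigr)(S_hw^h)(\bx_k^i).
\end{align*}
Summing over $k$ with weights $\alpha_k$ yields
\begin{align*}
    (W_\alpha^h)'(u^h)w^h=\sum_{k=1}^{2}\alpha_k\bigl(p_k^h+\lambda_k u^h,w^h\bigr)_\U.
\end{align*}

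\emph{Step 3 (variational inequality).} Finally, since $\Uadh$ is convex and $W_\alpha^h$ is convex and Fréchet differentiable on $\Uh$, the standard characterization (cf.\ \cite[Lemma~2.21]{Troltzsch}) states that $\bar u_\alpha^h\in\Uadh$ is optimal if and only if $(W_\alpha^h)'(\bar u_\alpha^h)(u^h-\bar u_\alpha^h)\ge 0$ for every $u^h\in\Uadh$. Substituting the formula from Step~2 evaluated at $\bar u_\alpha^h$, with the associated discrete adjoints $\bar p_{\alpha,k}^h$ solving \eqref{eq:adjoint_eq_discrete} for $\bar y_\alpha^h=S_h\bar u_\alpha^h$, gives exactly \eqref{eq:discrete_var_ineq_WSM}.

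No real obstacle is expected: every ingredient (strict convexity, compactness, differentiability, convex first-order condition) is either inherited from finite dimensions or is a direct discretization of what was already done in Section~\ref{sec:MO-OCP}. The only point that deserves a line of explanation is that, in contrast to the continuous case, the approximation argument with test functions $p_k^n\in C_0^\infty(\Omega)$ used in Lemma~\ref{lemma:aux_res_adj} is unnecessary here because $p_k^h\in\Vh$ is itself an admissible test function in \eqref{eq:discrete_Poisson}.
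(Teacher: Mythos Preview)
Your proposal is correct and follows essentially the same approach as the paper: existence via continuity of $j_k^h$ and compactness of $\Uadh$, uniqueness via strict convexity, and the variational inequality from the standard first-order condition for convex problems (\cite[Lemma~2.21]{Troltzsch}). The paper simply abbreviates your Steps~2 and~3 by saying that \eqref{eq:discrete_var_ineq_WSM} ``follows as in the continuous case''; your explicit observation that the density argument of Lemma~\ref{lemma:aux_res_adj} is unnecessary because $p_k^h\in\Vh$ is already a valid test function is a welcome clarification but not a different route.
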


\begin{proof}
    Since $j_{1}^{h}$ and $j_2^{h}$ are continuous and $\Uadh$ is compact, Weierstra{\ss} theorem immediately yields the existence of a solution. Uniqueness follows from the strict convexity of $j_{1}^{h}$ and $j_2^{h}$. The variational inequality \eqref{eq:discrete_var_ineq_WSM} follows as in the continuous case.
\end{proof}

We describe the algorithm proposed to solve the problem \eqref{eq:WSM_problem_discrete}, which is based on a Barzilai-Borwein gradient method \cite{MR967848}; see also \cite[Algorithm 1]{MR4110640}.
\begin{algorithm}[ht]
	\caption{Solving problem \eqref{eq:WSM_problem_discrete} with $\alpha$ fixed}
    \label{alg:discrete_WSM}
	\begin{algorithmic}[1]
        \REQUIRE{Problem data, $\alpha=(\alpha_1,\alpha_2)\in\R^2_{>}$, $tol=1$, and $u^{h,-1},u^{h,0}\in \Uadh$ with $u^{h,-1}\neq u^{h,0}$;}
        \STATE{Set $l=0$;}
        \WHILE{$tol > 10^{-8}$}
            \STATE{Obtain $y^{h,l}\in\Vh$ by solving \eqref{eq:discrete_Poisson} with $u=u^{h,l}$;}
            \STATE{Obtain $p_{1}^{h,l},p_{2}^{h,l}\in \Vh$ by solving \eqref{eq:adjoint_eq_discrete} with $y^h=y^{h,l}$;}
            \STATE{Compute step size
            \begin{align*}
                t_l = \frac{((W_{\alpha}^{h})^{\prime}(u^{h,l}) - (W_{\alpha}^{h})^{\prime}(u^{h,{l-1}}),(W_{\alpha}^{h})^{\prime}(u^{h,l}) - (W_{\alpha}^{h})^{\prime}(u^{h,{l-1}}))_\U}{((W_{\alpha}^{h})^{\prime}(u^{h,l})-(W_{\alpha,h})^{\prime}(u^{h,{l-1}}), u^{h,l} - u^{h,{l-1}})_\U};
            \end{align*}}
            \STATE{Set $u^{h,{l+1}}= \min\{\ub,\max\{\ua,u^{h,l}- \nicefrac{1}{t_l}(W_{\alpha}^{h})^{\prime}(u^{h,l})\}\}$ and $u_{\mathrm{ref}}= \min\{\ub,\max\{\ua,u^{h,l}- (W_{\alpha}^{h})^{\prime}(u^{h,l})\}\}$ ;} 
            \STATE{Set $tol = \|u^{h,{l+1}} - u_{\mathrm{ref}}\|_{\U}$} and $l=l+1$;\footnotemark
        \ENDWHILE
        \RETURN{Optimal solution $\bar{u}_{\alpha}^{h}$ and its objective value $j^h(\bar{u}_{\alpha}^{h})$.}
	\end{algorithmic}
\end{algorithm}

In Algorithm \ref{alg:WSM}, we show how we compute the discrete approximation of the set of Pareto points and the corresponding front.
\begin{algorithm}[ht]
    \label{alg:WSM}
    \caption{Weighted-sum method}
    \begin{algorithmic}[1]
        \REQUIRE{Number $\ell_{\mathrm{max}}\in \mathbb{N}$ of stationary points, problem data, and $0<\varepsilon \ll 1$:}
        \STATE{Set $\mathcal{P}_{s}^h = \mathcal{P}_f^h = \emptyset$;}
        \FOR{$\ell=1,\ldots,\ell_\mathsf{max}$}
            \STATE{Set $\alpha_2=\varepsilon+\nicefrac{(\ell-1)}{(\ell_\mathsf{max}-1)}$;}
            \STATE{Solve problem \eqref{eq:WSM_problem_discrete} using Algorithm~\ref{alg:discrete_WSM} with weight $\alpha(\ell)=(1-\alpha_2,\alpha_2)$, save solution $\bar{u}_{\alpha(\ell)}^{h}$ and its objective value $j^h(\bar{u}_{\alpha(\ell)}^{h})$;}
            \STATE{Set $\mathcal{P}_{s}^h=\mathcal{P}_{s}^h \cup \{ \bar{u}_{\alpha(\ell)}^{h}\}$ and $\mathcal{P}_{f}^h=\mathcal{P}_{f}^h \cup \{ j^h(\bar{u}_{\alpha(\ell)}^{h})\}$;}
        \ENDFOR
        \RETURN{Discrete approximations $\mathcal{P}_{s}^h$ and $\mathcal{P}_f^h$ of Pareto stationary points and front, respectively.}
    \end{algorithmic}
\end{algorithm}

%%%%%%%%%%%%%%%%%%%
\subsubsection{Error estimates for WSM}
%%%%%%%%%%%%%%%%%%%

Let $\alpha =(\alpha_{1},\alpha_{2})\in \R^2_{>}$ be fixed with $\alpha_1 + \alpha_2 = 1$.
In this section, we prove convergence rates for $\|\bar u_\alpha  - \bar{u}_{\alpha}^{h}\|_\U$, where $\bar u_\alpha $ and $\bar{u}_{\alpha}^{h}$ denote the unique solutions to \eqref{eq:WSM_problem} and \eqref{eq:WSM_problem_discrete}, respectively.

We start by introducing the discrete auxiliary variable $\hat y^h \in \mathbb{V}_{h}$ as the unique solution to 
\begin{align}
    \label{def:hat_y}
    (\hat y^h,\varphi^h)_V= (\bar u_\alpha,\varphi^h)_\U \quad \text{for all }\varphi^h\in \mathbb{V}_{h}.
\end{align}
We also define, for $k\in\{1,2\}$, the variable $\hat{p}_{k}^{h}\in \Vh$ solution to
\begin{align}
    \label{def:hat_varphi}
    (\varphi^h,\hat p_{k}^{h})_V=\sum_{i=1}^{n_k}\left(\hat y^h(\bx_k^i) - y_k^i\right)\varphi^h(\bx_k^i) \quad \text{for all }\varphi^h \in \Vh.
\end{align}

\begin{proposition}[Auxiliary estimate]
    \label{prop:aux_estimate_hat_tilde}
    Let $\bar{p}_{\alpha,k}$, with $k\in\{1,2\}$, be the unique solution to \eqref{eq:adjoint_eq} associated with the optimal solution $\bar u_\alpha $ to \eqref{eq:WSM_problem}, and let $\hat p_{k}^{h}$, with $k\in\{1,2\}$, be the unique solution \eqref{def:hat_varphi}.
    Assume that $\ua,\ub$ are real constants.
    Then, there exists an $h_*>0$ such that
    \begin{equation*}
        \sum_{k=1}^2\|\bar{p}_{\alpha,k}-\hat p_{k}^{h}\|_{L^{1}(\Omega)} \lesssim h^2|\log h|^2 \quad \text{for all } ~ 0 < h < h_*.
    \end{equation*}
\end{proposition}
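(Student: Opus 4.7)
The plan is to introduce a natural intermediate discrete adjoint and decompose the error via a triangle inequality. Concretely, define $\tilde p_{k}^{h}\in\Vh$ as the unique solution of
\[
(\varphi^h,\tilde p_{k}^{h})_V=\sum_{i=1}^{n_k}\bigl(\bar y_{\alpha}(\bx_k^i)-y_k^i\bigr)\varphi^h(\bx_k^i)\quad\text{for all }\varphi^h\in\Vh,
\]
where $\bar y_{\alpha}=S\bar u_{\alpha}$ is the \emph{exact} optimal state. Then $\tilde p_{k}^{h}$ is precisely the standard Galerkin approximation in $\Vh$ of the continuous adjoint $\bar p_{\alpha,k}$, since both share the same right-hand side. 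The triangle inequality
\[
\|\bar p_{\alpha,k}-\hat p_{k}^{h}\|_{L^{1}(\Omega)}\le\|\bar p_{\alpha,k}-\tilde p_{k}^{h}\|_{L^{1}(\Omega)}+\|\tilde p_{k}^{h}-\hat p_{k}^{h}\|_{L^{1}(\Omega)}
\]
reduces matters to estimating the two summands separately.

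For the first summand, I am in the classical setting of FE approximation in $L^{1}$ of a Poisson-type problem with measure data supported at a finite collection of interior points. A sharp bound of order $h^{2}|\log h|^{2}$ is known in this context, and I would invoke (or mirror the argument of) \cite[Theorem 7.5]{MR3973329}: its proof relies on duality against a regular problem combined with weighted pointwise estimates for the discrete Green's function, and its hypotheses apply directly to $\bar p_{\alpha,k}$ since the right-hand side of \eqref{eq:adjoint_eq} is a sum of Diracs at interior points of the convex polytope $\Omega$.

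For the second summand, set $\delta^{h}\colonequals\tilde p_{k}^{h}-\hat p_{k}^{h}\in\Vh$. Subtracting the defining equations of $\tilde p_{k}^{h}$ and $\hat p_{k}^{h}$ gives
\[
(\varphi^h,\delta^{h})_V=\sum_{i=1}^{n_k}\bigl(\bar y_{\alpha}(\bx_k^i)-\hat y^h(\bx_k^i)\bigr)\varphi^h(\bx_k^i)\quad\text{for all }\varphi^h\in\Vh.
\]
Letting $g_{h}^{z}\in\Vh$ denote the discrete Green's function at $z\in\Omega$ (i.e., the Ritz projection of the continuous Green's function), I can write $\delta^{h}=\sum_{i=1}^{n_k}\bigl(\bar y_{\alpha}(\bx_k^i)-\hat y^h(\bx_k^i)\bigr)g_{h}^{\bx_k^i}$, so that
\[
\|\delta^{h}\|_{L^{1}(\Omega)}\le\Bigl(\sum_{i=1}^{n_k}\|g_{h}^{\bx_k^i}\|_{L^{1}(\Omega)}\Bigr)\,\|\bar y_{\alpha}-\hat y^h\|_{L^{\infty}(\Omega)}.
\]
The hypothesis that $\ua,\ub$ are real constants yields $\bar u_{\alpha}\in W^{1,\infty}(\Omega)$ (as noted right after \eqref{eq:projection_control_WSM}), hence sufficient regularity of $\bar y_{\alpha}$ for the standard FE $L^{\infty}$-estimate $\|\bar y_{\alpha}-\hat y^h\|_{L^{\infty}(\Omega)}\lesssim h^{2}|\log h|$ to apply on the convex polytope $\Omega$. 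Combining this with the uniform bound $\|g_{h}^{z}\|_{L^{1}(\Omega)}\lesssim 1$, valid in two dimensions by the $W^{1,p}$-regularity of the continuous Green's function for $p<2$ and a standard stability argument, produces $\|\delta^{h}\|_{L^{1}(\Omega)}\lesssim h^{2}|\log h|$.

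Summing both contributions and then over $k\in\{1,2\}$ yields the claimed $h^{2}|\log h|^{2}$ bound, valid for $h<h_{*}$ with $h_{*}$ chosen so that the relevant discrete Sobolev inequalities and inverse estimates hold without degradation. The main technical obstacle I expect is the first summand: the sharp $L^{1}$-estimate for a Dirac-data adjoint problem on a convex polytope is delicate, and rather than reproducing it I would rely on the proofs in \cite{MR3973329} (and related work such as \cite{MR3973329}), verifying only that the geometric and regularity hypotheses are met here. The second summand is essentially routine once the $L^{1}$-stability of the discrete Green's function and the $L^{\infty}$-state error estimate are in hand.
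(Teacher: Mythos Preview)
Your decomposition via the intermediate adjoint $\tilde p_k^h$ and the triangle inequality is exactly the paper's strategy. For the first summand you invoke the $L^1$--Dirac approximation result from \cite{MR3973329}; so does the paper, although the correct reference is \cite[Lemma~5.3]{MR3973329} rather than Theorem~7.5 (which concerns the variational discretization of the full control problem).

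For the second summand the routes diverge. The paper introduces the \emph{continuous} auxiliary $\chi\in W_0^{1,\mathsf p}(\Omega)$ solving the Dirac problem with weights $\bar y_\alpha(\bx_k^i)-\hat y^h(\bx_k^i)$, observes that $\tilde p_k^h-\hat p_k^h$ is its Galerkin approximation, and bounds $\|\tilde p_k^h-\hat p_k^h\|_{L^1}$ by $\|(\tilde p_k^h-\hat p_k^h)-\chi\|_{L^1}+\|\chi\|_{L^1}$, using \cite[Lemma~5.3]{MR3973329} again for the first piece and the stability $\|\nabla\chi\|_{L^{\mathsf p}}\lesssim\sum_i|\bar y_\alpha(\bx_k^i)-\hat y^h(\bx_k^i)|$ for the second. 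Your discrete Green's function representation is a perfectly legitimate alternative that achieves the same thing more directly, and the bound $\|g_h^{z}\|_{L^1}\lesssim 1$ holds for $d\in\{2,3\}$ (your restriction to two dimensions is unnecessary---the continuous Green's function lies in $W^{1,\mathsf p}(\Omega)\hookrightarrow L^1(\Omega)$ for $\mathsf p<\nicefrac{d}{d-1}$ in either dimension).

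There is, however, one genuine gap. You assert the \emph{global} estimate $\|\bar y_\alpha-\hat y^h\|_{L^\infty(\Omega)}\lesssim h^2|\log h|$, which would require $\bar y_\alpha\in W^{2,\infty}(\Omega)$. On a convex polytope this generally fails because of corner/edge singularities, even when $\bar u_\alpha\in W^{1,\infty}(\Omega)$. The paper avoids this by using only an \emph{interior} pointwise estimate: since the observation points lie in a compact $\omega_k\Subset\Omega$, \cite[Lemma~4.4(i)]{MR3973329} gives $\|\bar y_\alpha-\hat y^h\|_{L^\infty(\omega_k)}\lesssim h^2|\log h|^2$ from $\bar u_\alpha\in L^\infty(\Omega)$ alone. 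Your argument is easily repaired along the same lines: your representation already gives $\|\delta^h\|_{L^1}\le\sum_i|\bar y_\alpha(\bx_k^i)-\hat y^h(\bx_k^i)|\,\|g_h^{\bx_k^i}\|_{L^1}$, so there is no need to pass through the global $L^\infty$ norm---just bound the pointwise errors at the interior points $\bx_k^i$ via the interior estimate.
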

\begin{proof}
    For simplicity, we only prove the estimate for the term $\|\bar{p}_{\alpha,1}- \hat p_{1}^{h}\|_{L^{1}(\Omega)}$. The case $k=2$ follows by similar arguments. Let $\tilde p^h\in\Vh$ be the unique solution to 
    \begin{equation}
        \label{def:tilde_varphi}
        (\varphi^h,\tilde p^h)_V = \sum_{i=1}^{n_1}\left(\bar{y}_{\alpha}(\bx_1^i) - y_1^i\right)\varphi^h(\bx_1^i)\quad\text{for all }\varphi^h\in\Vh.
    \end{equation}
    We note that $\tilde p^h$ corresponds to the FE approximation of $\bar{p}_{\alpha,1}$ (solution to \eqref{eq:adjoint_eq} with $k=1$) in $\Vh$. 
    An application of the triangle inequality and \cite[Lemma 5.3]{MR3973329} results in 
    \begin{align*}
        \|\bar{p}_{\alpha,1}- \hat p_{1}^{h}\|_{L^{1}(\Omega)}\leq\|\bar{p}_{\alpha,1}-\tilde p^h\|_{L^{1}(\Omega)} + \|\tilde p^h-\hat p_{1}^{h}\|_{L^{1}(\Omega)}\lesssim h^2|\log h|^{2}+\|\tilde p^h-\hat p_{1}^{h}\|_{L^{1}(\Omega)}.
    \end{align*}
    To control $\|\tilde p^h-\hat p_{1}^{h}\|_{L^{1}(\Omega)}$, we introduce the variable $\chi \in W_0^{1,\mathsf{p}}(\Omega)$ with $\mathsf{p}<\nicefrac{d}{d-1}$ as the unique solution to
    \begin{align*}
        (\varphi,\chi)_V= \sum_{i=1}^{n_1}\left(\bar{y}_{\alpha}(\bx_1^i) - \hat y^h(\bx_1^i)\right)\varphi(\bx_1^i) \quad \text{for all }\varphi\in W_0^{1,\mathsf{q}}(\Omega).
    \end{align*}
    We note that $\tilde p^h-\hat p_{1}^{h}$ is the unique FE approximation of $\chi$ in $\Vh$. 
    Therefore, an application of \cite[Lemma 5.3]{MR3973329}, the embedding $W_0^{1,\mathsf{p}}(\Omega)\hookrightarrow L^{1}(\Omega)$, and the stability estimate $\|\nabla \chi\|_{L^{\mathsf{p}}(\Omega;\R^{d})} \lesssim \sum_{i=1}^{n_1}|\bar{y}_{\alpha}(\bx_{1}^i) - \hat y^h(\bx_{1}^i)|$ (cf. \cite[Theorem 1]{MR812624}) yield
    \begin{align}
        \label{eq:estimate_tilde_hat_varphi}
        \begin{aligned}
            \|\tilde p^h-\hat p_{1}^{h}\|_{L^{1}(\Omega)}
            &\leq\|(\tilde p^h-\hat p_{1}^{h})-\chi\|_{L^{1}(\Omega)} + \|\chi\|_{L^{1}(\Omega)}\\
            &\lesssim h^2|\log h|^2 + \|\nabla \chi\|_{L^{\mathsf{p}}(\Omega;\R^{d})} \lesssim h^2|\log h|^2 + \sum_{i=1}^{n_1}|\bar{y}_{\alpha}(\bx_{1}^i) - \hat y^h(\bx_{1}^i)|.
        \end{aligned}
    \end{align}
    To estimate $\sum_{i=1}^{n_1}|\bar{y}_{\alpha}(\bx_{1}^i) - \hat y^h(\bx_{1}^i)|$ in \eqref{eq:estimate_tilde_hat_varphi}, we proceed as follows. 
    Since $\Omega_{1}$ is finite, there exist sets $\omega_1,\widehat{\omega}_{1}$ such that $\Omega_{1} \subset \omega_{1} \Subset \widehat{\omega}_{1} \Subset \Omega$ with $\widehat{\omega}_{1}$ smooth. 
    At the same time, since $\ua,\ub\in \R$, we have in particular $\bar{u}_{\alpha}\in L^{\infty}(\Omega)$. 
    These ingredients, in combination with the fact that $\hat y^h$ corresponds to the unique FE approximation of $\bar{y}_{\alpha}$ in $\Vh$, allow us to use \cite[Lemma 4.4 (i)]{MR3973329} to obtain
    \begin{align*}
        \sum_{i=1}^{n_1}|\bar{y}_{\alpha}(\bx_{1}^i) - \hat y^h(\bx_{1}^i)|
        \leq 
        n_1\|\bar{y}_{\alpha} - \hat y^h\|_{L^{\infty}(\omega_1)}
        \lesssim
        h^2|\log h|^{2}, \qquad ~ 0 < h < h_*.
    \end{align*}
    This concludes the proof.
\end{proof}

With these ingredients at hand, we present the following error estimate.

\begin{theorem}[Error estimate: WSM]\label{thm:estimate_WSM}
    Let $\bar u_\alpha $ and $\bar{u}_{\alpha}^{h}$ be the unique solutions to \eqref{eq:WSM_problem} and \eqref{eq:WSM_problem_discrete}, respectively. 
    Assume that $\ua,\ub$ are real constants.
    Then, there exists $h_*,h_{\star}>0$ such that    
    \begin{align*}
        \left(\sum_{k=1}^2 \alpha_k\lambda_k\right)^{\frac{1}{2}}\|\bar u_\alpha  - \bar{u}_{\alpha}^{h}\|_\U\lesssim h|\log h|\quad \text{for all } h < \min\{h_*,h_{\star}\}.
    \end{align*}
\end{theorem}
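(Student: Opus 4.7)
My plan is to combine the continuous and discrete variational inequalities \eqref{eq:var_ineq_WSM} and \eqref{eq:discrete_var_ineq_WSM} via a Céa-type argument, then reduce the resulting residuals to terms controlled either by Proposition \ref{prop:aux_estimate_hat_tilde} or by $L^{2}$-orthogonality of the piecewise-constant projection. Write $e\colonequals\bar u_{\alpha}-\bar u_{\alpha}^{h}$, $\eta\colonequals\bar u_{\alpha}-\Pi_{h}\bar u_{\alpha}$, where $\Pi_{h}:\U\to\Uh$ is the $L^{2}$-projection onto piecewise constants. Because $\ua,\ub$ are real constants, $\Pi_{h}\bar u_{\alpha}\in\Uadh$, and $\Uadh\subset\Uad$.

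\textbf{Step 1 (combine the VIs).} Test \eqref{eq:var_ineq_WSM} with $u=\bar u_{\alpha}^{h}$ and \eqref{eq:discrete_var_ineq_WSM} with $u^{h}=\Pi_{h}\bar u_{\alpha}$, add them, and group the terms involving $e$ and $\eta$. Since $\lambda_{k}\bar u_{\alpha}^{h}\in\Uh$ and $\eta\perp\Uh$, the contribution $\sum_{k}\alpha_{k}\lambda_{k}(\bar u_{\alpha}^{h},\eta)_{\U}$ vanishes, so after rearrangement one obtains
\[
\sum_{k=1}^{2}\alpha_{k}\lambda_{k}\|e\|_{\U}^{2}\le\sum_{k=1}^{2}\alpha_{k}\bigl[(\bar p_{\alpha,k}^{h}-\bar p_{\alpha,k},e)_{\U}-(\bar p_{\alpha,k}^{h},\eta)_{\U}\bigr].
\]

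\textbf{Step 2 (handle the adjoint error against $e$).} Split $\bar p_{\alpha,k}^{h}-\bar p_{\alpha,k}=(\bar p_{\alpha,k}^{h}-\hat p_{k}^{h})+(\hat p_{k}^{h}-\bar p_{\alpha,k})$. A discrete analogue of Lemma \ref{lemma:aux_res_adj} (test \eqref{eq:adjoint_eq_discrete} with $\varphi^{h}=S_{h}e$ and use \eqref{eq:discrete_Poisson}) gives
\[
(\bar p_{\alpha,k}^{h}-\hat p_{k}^{h},e)_{\U}=-\sum_{i=1}^{n_{k}}\bigl((S_{h}e)(\bx_{k}^{i})\bigr)^{2}\le 0,
\]
since $\bar y_{\alpha}^{h}-\hat y^{h}=-S_{h}e$; this term can be dropped. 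Because $\bar u_{\alpha}$ and $\bar u_{\alpha}^{h}$ both lie in $[\ua,\ub]$, we have $\|e\|_{L^{\infty}(\Omega)}\lesssim 1$, so by $L^{1}$-$L^{\infty}$ duality and Proposition \ref{prop:aux_estimate_hat_tilde},
\[
(\hat p_{k}^{h}-\bar p_{\alpha,k},e)_{\U}\le\|\hat p_{k}^{h}-\bar p_{\alpha,k}\|_{L^{1}(\Omega)}\|e\|_{L^{\infty}(\Omega)}\lesssim h^{2}|\log h|^{2}.
\]

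\textbf{Step 3 (handle the projection residual).} By orthogonality, $(\bar p_{\alpha,k}^{h},\eta)_{\U}=(\bar p_{\alpha,k}^{h}-\Pi_{h}\bar p_{\alpha,k}^{h},\eta)_{\U}$; equivalently, write it as $(\bar p_{\alpha,k}^{h}-\bar p_{\alpha,k},\eta)_{\U}+(\bar p_{\alpha,k},\eta)_{\U}$. Since $\ua,\ub\in\R$, the projection formula \eqref{eq:projection_control_WSM} and \cite[Lemma 6.1]{MR3973329} give $\bar u_{\alpha}\in W^{1,\infty}(\Omega)$, hence $\|\eta\|_{L^{\infty}(\Omega)}\lesssim h$. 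Thus, by triangle inequality together with Proposition \ref{prop:aux_estimate_hat_tilde} and a discrete $L^{1}$-stability estimate $\|\bar p_{\alpha,k}^{h}-\hat p_{k}^{h}\|_{L^{1}(\Omega)}\lesssim\|e\|_{\U}$ (derived from the discrete analogue of Lemma \ref{lemma:aux_res_adj} and the $L^{\infty}$-stability of $S_{h}$), the first piece is bounded by $h\cdot(h^{2}|\log h|^{2}+\|e\|_{\U})$. For the second piece, orthogonality and a local Poincaré estimate on each element give $\|\bar p_{\alpha,k}-\Pi_{h}\bar p_{\alpha,k}\|_{L^{1}(\Omega)}\lesssim h\|\nabla\bar p_{\alpha,k}\|_{L^{1}(\Omega)}$, which is bounded via Hölder by $h\|\nabla\bar p_{\alpha,k}\|_{L^{\mathsf p}(\Omega)}$ since $\bar p_{\alpha,k}\in W_{0}^{1,\mathsf p}(\Omega)$. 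Altogether
\[
|(\bar p_{\alpha,k}^{h},\eta)_{\U}|\lesssim h^{2}+h\,\|e\|_{\U}.
\]

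\textbf{Step 4 (conclude).} Inserting the bounds from Steps 2 and 3 into Step 1 and applying Young's inequality $h\|e\|_{\U}\le\tfrac{\delta}{2}\|e\|_{\U}^{2}+\tfrac{1}{2\delta}h^{2}$ with $\delta$ small enough to absorb the quadratic term into the left-hand side yields
\[
\sum_{k=1}^{2}\alpha_{k}\lambda_{k}\|e\|_{\U}^{2}\lesssim h^{2}|\log h|^{2},
\]
which is the claimed estimate.

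\textbf{Anticipated main obstacle.} Step 3 is the delicate point: the continuous and discrete adjoints only enjoy the reduced regularity $W^{1,\mathsf p}$ with $\mathsf p<\nicefrac{d}{d-1}$, so standard Bramble--Hilbert arguments applied directly to $\bar p_{\alpha,k}^{h}$ are unavailable. The remedy is to exploit simultaneously (i) the orthogonality $\eta\perp\Uh$, (ii) the $W^{1,\infty}$-regularity of $\bar u_{\alpha}$ transferred from constancy of $\ua,\ub$ via \eqref{eq:projection_control_WSM}, and (iii) the auxiliary discrete adjoint $\hat p_{k}^{h}$ together with Proposition \ref{prop:aux_estimate_hat_tilde} to pay the $L^{1}$-cost rather than an $L^{2}$-cost. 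The logarithmic factor inherits directly from the $h^{2}|\log h|^{2}$-bound in Proposition \ref{prop:aux_estimate_hat_tilde}.
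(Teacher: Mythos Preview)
Your proof is correct and follows essentially the same route as the paper: combine the two variational inequalities, split the adjoint error through the auxiliary discrete adjoint $\hat p_{k}^{h}$, drop the nonpositive piece $(\bar p_{\alpha,k}^{h}-\hat p_{k}^{h},e)_{\U}\le 0$, invoke Proposition~\ref{prop:aux_estimate_hat_tilde} for the $L^{1}$--$L^{\infty}$ pairing, and handle the projection residual via the $W^{1,\infty}$-regularity of $\bar u_{\alpha}$. The only organizational differences are that you exploit $\eta\perp\Uh$ already in Step~1 (the paper carries the $\lambda_{k}\bar u_{\alpha}^{h}$-term into $\mathsf{II}$ and uses orthogonality later), and in Step~3 you bound $\|\bar p_{\alpha,k}^{h}-\hat p_{k}^{h}\|_{L^{1}}$ by a direct duality argument using $L^{\infty}$-stability of $S_{h}$, whereas the paper introduces a continuous auxiliary adjoint $\chi$ and cites \cite[Lemma~5.3 and Lemma~4.4(i)]{MR3973329}; both yield the same control $\lesssim h^{2}|\log h|^{2}+\|e\|_{\U}$, so the conclusion is identical.
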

\begin{proof}
    Let $\pi_{0}:\U\to \Uh$ be the $\U$-orthogonal projection operator. We consider $u = \bar{u}_{\alpha}^{h}$ in \eqref{eq:var_ineq_WSM} and $u^h = \pi_{0}\bar u_\alpha $ in \eqref{eq:discrete_var_ineq_WSM}, and add the obtained inequalities. 
    This results in
    \begin{align}
        \label{eq:initial_estimate_I_II}
        \begin{aligned}
            \sum_{k=1}^2\alpha_k\lambda_k\|\bar u_\alpha-\bar{u}_{\alpha}^{h}\|_\U^2&\le\sum_{k=1}^2\alpha_k(\bar{p}_{\alpha,k}-\bar{p}_{\alpha,k}^{h}, \bar{u}_{\alpha}^{h} - \bar u_\alpha )_\U+\sum_{k=1}^2\alpha_k(\bar{p}_{\alpha,k}^{h} + \lambda_k \bar{u}_{\alpha}^{h}, \pi_{0}\bar u_\alpha  - \bar u_\alpha )_\U \\
            &\equalscolon\mathsf{I} + \mathsf{II}.
        \end{aligned}
    \end{align}
    Let us bound 
    \begin{align}
        \label{eq:estimate_I}
        \mathsf{I}=\alpha_1(\bar{p}_{\alpha,1}-\bar{p}_{\alpha,1}^{h}, \bar{u}_{\alpha}^{h} - \bar u_\alpha )_\U+\alpha_2(\bar{p}_{\alpha,2} - \bar{p}_{\alpha,2}^{h}, \bar{u}_{\alpha}^{h} - \bar u_\alpha )_\U\equalscolon\mathsf I_1+\mathsf I_2.
    \end{align}
    For clarity, we concentrate only on $\mathsf{I}_{1}$, since $\mathsf{I}_{2}$ can be treated analogously. 
    We first utilize the auxiliary variable $\hat p_{1}^{h}\in\Vh$, introduced in \eqref{def:hat_varphi}, and write
    \begin{align*}
        \mathsf I_1=  \alpha_1(\hat p_{1}^{h}-\bar{p}_{\alpha,1}^{h},\bar{u}_{\alpha}^{h}- \bar u_\alpha )_\U+ \alpha_1(\bar{p}_{\alpha,1}- \hat p_{1}^{h}, \bar{u}_{\alpha}^{h} - \bar u_\alpha )_\U.    
    \end{align*}
    Recall that $\hat p_{1}^{h}-\bar{p}_{\alpha,1}^{h}\in \Vh$ and $\bar{y}_{\alpha}^h-\hat y^h \in \Vh$ solve
    \begin{align*}
        & (\varphi^h,\hat p_{1}^{h} - \bar{p}_{\alpha,1}^{h})_V= \sum_{i=1}^{n_1}\left(\hat y^h(\bx_1^i)- \bar{y}_{\alpha}^h(\bx_1^i)\right)\varphi^h(\bx_1^i) &&\text{for all }\varphi^h\in\Vh,\\
        & (\bar{y}_{\alpha}^h-\hat y^h,\varphi^h)_V = (\bar{u}_{\alpha}^{h}- \bar u_\alpha ,\varphi^h)_{\U} && \text{for all }\varphi^h\in \Vh,
    \end{align*}
    respectively, with $\hat y^h\in\Vh$ defined in \eqref{def:hat_y}. Hence, choosing $\varphi^h =\bar{y}_{\alpha}^h-\hat y^h$ in the first equation and $\varphi^h= \hat p_{1}^{h}-\bar{p}_{\alpha,1}^{h}$ in the second equation, we infer that
    \begin{align}\label{eq:estimate_I1_<=0}
        \alpha_1(\hat p_{1}^{h}-\bar{p}_{\alpha,1}^{h}, \bar{u}_{\alpha}^{h}- \bar u_\alpha )_\U= -\alpha_{1} \sum_{i=1}^{n_1}\left(\hat y^h(\bx_1^i) - \bar{y}_{\alpha}^h(\bx_1^i)\right)^{2} \leq 0.
    \end{align}
    Consequently, $\mathsf I_1\le \alpha_1(\bar{p}_{\alpha,1}- \hat p_{1}^{h}, \bar{u}_{\alpha}^{h}-\bar u_\alpha)_\U$.
    To control the latter, we apply H\"older's inequality, Proposition \ref{prop:aux_estimate_hat_tilde}, and the fact that $\bar{u}_{\alpha},\bar{u}_{\alpha}^{h}\in L^{\infty}(\Omega)$:
    \begin{equation}\label{eq:estimate_I1}
        \mathsf{I}_{1}
        \leq 
        \alpha_1\|\bar{p}_{\alpha,1}- \hat p_{1}^{h}\|_{L^{1}(\Omega)}\|\bar{u}_{\alpha}^{h} - \bar u_\alpha \|_{L^{\infty}(\Omega)}
        \lesssim h^{2}|\log h|^{2}, \qquad ~ 0 < h < h_{*}.
    \end{equation}
    Therefore, in view of \eqref{eq:estimate_I1} and its analogous version for $\mathsf{I}_{2}$, \eqref{eq:estimate_I}, and \eqref{eq:initial_estimate_I_II}, we conclude that 
    \begin{align}
        \label{eq:estimate_sum_II}
    \left(\sum_{k=1}^2\alpha_k\lambda_k\right)\|\bar u_\alpha  - \bar{u}_{\alpha}^{h}\|_\U^2
    \leq C_1 h^{2}|\log h|^{2} + \mathsf{II}
    \end{align}
    for a constant $C_1>0$ and all $0 < h < h_{*}$. 
    To estimate $\mathsf{II}$ we write 
    \begin{align}
        \label{eq:estimate_II}
        \mathsf{II}=\alpha_1(\bar{p}_{\alpha,1}^{h} + \lambda_1 \bar{u}_{\alpha}^{h}, \pi_{0}\bar u_\alpha  - \bar u_\alpha )_\U + \alpha_2(\bar p_{\alpha,2}^{h} + \lambda_2 \bar{u}_{\alpha}^{h}, \pi_{0}\bar u_\alpha  - \bar u_\alpha )_\U\equalscolon\mathsf{II}_{1}+ \mathsf{II}_{2}. 
    \end{align}
    In what follows, we only estimate $\mathsf{II}_{1}$ since the estimation of $\mathsf{II}_{2}$ follows analogous steps.
    First, we note that
    \begin{align}
        \label{eq:estimate_II1}
        \begin{aligned}
            \mathsf{II}_{1}= &\,\alpha_1(\bar{p}_{\alpha,1} + \lambda_1 \bar u_\alpha , \pi_{0}\bar u_\alpha  - \bar u_\alpha )_\U + \alpha_1(\hat p_{1}^{h} - \bar{p}_{\alpha,1}, \pi_{0}\bar u_\alpha  - \bar u_\alpha )_\U\\
            & + \alpha_1(\bar{p}_{\alpha,1}^{h} - \hat p_{1}^{h}, \pi_{0}\bar u_\alpha  - \bar u_\alpha )_\U+ \alpha_1\lambda_{1}(\bar{u}_{\alpha}^{h} - \bar u_\alpha , \pi_{0}\bar u_\alpha  - \bar u_\alpha )_\U.
        \end{aligned}
    \end{align}
    To control the term $\alpha_1(\bar{p}_{\alpha,1} + \lambda_1 \bar u_\alpha , \pi_{0}\bar u_\alpha  - \bar u_\alpha )_\U$ in \eqref{eq:estimate_II1}, we use the orthogonality of $\pi_0$ to infer that
    \begin{align*}
        \alpha_1(\bar{p}_{\alpha,1} + \lambda_1 \bar u_\alpha, \pi_{0}\bar u_\alpha  - \bar u_\alpha )_\U
        = & ~
        \alpha_1((\bar{p}_{\alpha,1} + \lambda_1 \bar u_\alpha) - \pi_{0}(\bar{p}_{\alpha,1} + \lambda_1 \bar u_\alpha), \pi_{0}\bar u_\alpha  - \bar u_\alpha )_\U
        \\ 
        = & ~
        \alpha_1(\bar{p}_{\alpha,1} - \pi_{0}\bar{p}_{\alpha,1}, \pi_{0}\bar u_\alpha  - \bar u_\alpha )_\U - \lambda_{1}\|\pi_0\bar u_\alpha - \bar u_\alpha\|_\U^2 \\
        \leq & ~ \alpha_1(\bar{p}_{\alpha,1} - \pi_{0}\bar{p}_{\alpha,1}, \pi_{0}\bar u_\alpha  - \bar u_\alpha )_\U.
     \end{align*}
    Then, in view of the $W^{1,\infty}(\Omega)$-regularity of $\bar{u}_{\alpha}$, which stems from \eqref{eq:projection_control_WSM} and \cite[Lemma 6.1]{MR3973329}, and using that $\|\pi_{0}\bar{p}_{\alpha,1} - \bar{p}_{\alpha,1}\|_{L^{1}(\Omega)} \lesssim h\|\nabla \bar{p}_{\alpha,1}\|_{L^{1}(\Omega;\R^{d})}$ (cf. \cite[Proposition 1.135]{MR2050138}) we obtain
    \begin{align*}
        \alpha_1(\bar{p}_{\alpha,1} + \lambda_1 \bar u_\alpha , \pi_{0}\bar u_\alpha  - \bar u_\alpha )_\U
        \leq & ~
        \alpha_1\|\pi_{0}\bar{p}_{\alpha,1} - \bar{p}_{\alpha,1}\|_{L^{1}(\Omega)}\|\pi_{0}\bar u_\alpha  - \bar u_\alpha \|_{L^{\infty}(\Omega)} \\
        \lesssim & ~
        h^2\|\nabla \bar{p}_{\alpha,1}\|_{L^{1}(\Omega;\R^{d})}\|\nabla \bar{u}_{\alpha}\|_{L^{\infty}(\Omega;\R^{d})} 
        \lesssim
        h^2.
    \end{align*}
    The second term on the right-hand side of \eqref{eq:estimate_II1} is bounded in view of Proposition \ref{prop:aux_estimate_hat_tilde} and the $W^{1,\infty}(\Omega)$-regularity of $\bar{u}_{\alpha}$.
    These arguments yield
    \begin{align*}
        \alpha_1(\hat p_{1}^{h} - \bar{p}_{\alpha,1}, \pi_{0}\bar u_\alpha  - \bar u_\alpha )_{\U}
        \leq 
        \frac{\alpha_1}{2}\|\hat{p}_{1}^{h} - \bar{p}_{\alpha,1}\|_{L^{1}(\Omega)}^{2} + \frac{\alpha_1}{2}\|\pi_{0}\bar u_\alpha  - \bar u_\alpha \|_{L^{\infty}(\Omega)}^{2}
        \lesssim
        h^{4}|\log h|^{4} + h^{2} 
        \lesssim h^{2}.
    \end{align*}
    To estimate the third term on the right-hand side of \eqref{eq:estimate_II1}, we use H\"older's inequality and similar arguments to those that lead to \eqref{eq:estimate_tilde_hat_varphi}, to arrive at
    \begin{align}\nonumber
        \alpha_1(\bar{p}_{\alpha,1}^{h} - \hat p_{1}^{h}, \pi_{0}\bar u_\alpha  - \bar u_\alpha )_\U
        \leq & ~
        \alpha_1\|\bar{p}_{\alpha,1}^{h} - \hat p_{1}^{h}\|_{L^{1}(\Omega)}\|\pi_{0}\bar u_\alpha  - \bar u_\alpha \|_{L^{\infty}(\Omega)} \\
        \lesssim & ~
        \left(h^2|\log h|^2 + \sum_{i=1}^{n_1}|\bar{y}_{\alpha}^{h}(\bx_{1}^i) - \hat y^h(\bx_{1}^i)|\right)\|\pi_{0}\bar u_\alpha  - \bar u_\alpha \|_{L^{\infty}(\Omega)}.
        \label{eq:estimate_third_1}
    \end{align}
    We let $\phi\in \Y$ be the unique solution to $(\phi,\varphi)_{V} = (\bar{u}_{\alpha}^{h} - \bar{u}_{\alpha},\varphi)_{\U}$ for all $\varphi \in V$, and immediately note that $\bar{y}_{\alpha}^{h} - \hat y^h$ corresponds to the unique FE approximation of $\phi$ in $\Vh$.
    On the other hand, since the set $\Omega_{1}$ is finite, there exist sets $\omega_1,\widehat{\omega}_{1}$ such that $\Omega_{1} \subset \omega_{1} \Subset \widehat{\omega}_{1} \Subset \Omega$ with $\widehat{\omega}_{1}$ smooth. 
    Hence, the application of \cite[Lemma 4.4 (i)]{MR3973329} and the use of the stability estimate $\|\phi\|_{L^{\infty}(\omega_1)}\leq \|\phi\|_{L^{\infty}(\Omega)} \lesssim \|\bar{u}_{\alpha}^{h} - \bar{u}_{\alpha}\|_{\U}$ imply that
    \begin{align*}
        \sum_{i=1}^{n_1}|\bar{y}_{\alpha}(\bx_{1}^i) - \hat y^h(\bx_{1}^i)|
        \lesssim
        \|(\bar{y}_{\alpha} - \hat y^h) - \psi\|_{L^{\infty}(\omega_1)} + \|\psi\|_{L^{\infty}(\Omega)}
        \lesssim
        h^2|\log h|^{2} + \|\bar{u}_{\alpha}^{h} - \bar{u}_{\alpha}\|_{\U}, \qquad ~ 0 < h < h_{\star}.
    \end{align*}
    Using this estimate, the $W^{1,\infty}(\Omega)$-regularity of $\bar{u}_{\alpha}$, and Young's inequality in \eqref{eq:estimate_third_1}, we conclude that 
    \begin{align*}
    \alpha_1(\bar{p}_{\alpha,1}^{h} - \hat p_{1}^{h}, \pi_{0}\bar u_\alpha  - \bar u_\alpha )_\U
    \lesssim 
    h^3|\log h|^{2} + h\|\bar{u}_{\alpha}^{h} - \bar{u}_{\alpha}\|_{\U}
    \leq 
    C_{2}h^3|\log h|^{2} + C_{3}h^{2} + \frac{\alpha_{1}\lambda_{1}}{4}\|\bar{u}_{\alpha}^{h} - \bar u_\alpha \|_\U^{2}
    \end{align*}
    for constants $C_2,C_3 >0$.
    Finally, the term $\alpha_1\lambda_{1}(\bar{u}_{\alpha}^{h} - \bar u_\alpha , \pi_{0}\bar u_\alpha  - \bar u_\alpha )_\U$ in \eqref{eq:estimate_II1} is bounded using Cauchy-Schwarz inequality and estimate \cite[Proposition 1.135]{MR2050138} as follows
    \begin{align*}
      \alpha_1\lambda_{1}(\bar{u}_{\alpha}^{h} - \bar u_\alpha , \pi_{0}\bar u_\alpha  - \bar u_\alpha )_\U
      \leq & \,
          \alpha_{1}\lambda_{1}\|\pi_{0}\bar u_\alpha  - \bar u_\alpha \|_\U^{2} + \frac{\alpha_{1}\lambda_{1}}{4}\|\bar{u}_{\alpha}^{h} - \bar u_\alpha \|_\U^{2}\\
      \leq & \, C_4 h^{2} + \frac{\alpha_{1}\lambda_{1}}{4}\|\bar{u}_{\alpha}^{h} - \bar u_\alpha \|_\U^{2}
    \end{align*}
    for a constant $C_4>0$.
    Therefore, the combination of all the estimates derived for the terms on the right-hand side of \eqref{eq:estimate_II1} gives as a result that
    \begin{equation*}
        \mathsf{II}_{1}\leq C h^{2} + \frac{\alpha_{1}\lambda_{1}}{2}\|\bar{u}_{\alpha}^{h} - \bar u_\alpha \|_\U^{2} \quad (C > 0). 
    \end{equation*}
    Similarly, we have that $\mathsf{II}_{2} \leq C h^{2} + \nicefrac{\alpha_{2}\lambda_{2}}{2}\|\bar{u}_{\alpha}^{h} - \bar u_\alpha \|_\U^{2}$. We conclude the desired error estimate using the estimates obtained for $\mathsf{II}_{1}$ and $\mathsf{II}_{2}$ in \eqref{eq:estimate_II}, and using the resulting estimate in \eqref{eq:estimate_sum_II}.
\end{proof}

We now improve the error estimate of Theorem \ref{thm:estimate_WSM} in two dimensions. Its proof utilizes an FE estimate for the specific case $d=2$; cf., \cite{Scott73,Scott76} and \cite{MR812624}.

\begin{theorem}[Improved error estimate: WSM]
    \label{Theorem:ImprovedErrorWSM}
    Let $d = 2$. In the framework of Theorem~{\em\ref{thm:estimate_WSM}}, we have the following optimal error estimate:
        \begin{align*}
        \left(\sum_{k=1}^2 \alpha_k\lambda_k\right)^{\frac{1}{2}}\|\bar u_\alpha  - \bar{u}_{\alpha}^{h}\|_\U\lesssim h \quad \text{for all } 0 < h < h_{\star}.
    \end{align*}
\end{theorem}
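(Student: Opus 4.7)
The plan is to replay the proof of Theorem~\ref{thm:estimate_WSM} verbatim, but to replace every FE estimate that carries a logarithmic pollution factor with a sharper $d=2$ counterpart. Inspecting the proof of Theorem~\ref{thm:estimate_WSM}, the $|\log h|$ factors enter in exactly three places: (i) the bound $\|\bar{p}_{\alpha,k}-\hat p_k^h\|_{L^1(\Omega)}\lesssim h^2|\log h|^2$ from Proposition~\ref{prop:aux_estimate_hat_tilde}; (ii) the pointwise state estimate $|\bar y_\alpha(\bx_k^i)-\hat y^h(\bx_k^i)|\lesssim h^2|\log h|^2$ that stems from \cite[Lemma~4.4~(i)]{MR3973329}; (iii) the intermediate estimate $\|\bar{p}_{\alpha,k}^h-\hat p_k^h\|_{L^1(\Omega)}\lesssim h^2|\log h|^2 + \sum_i|\bar y_\alpha^h(\bx_k^i)-\hat y^h(\bx_k^i)|$ used inside the bound for $\mathsf{II}_1$. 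Each of these estimates is sharp in three dimensions, but in two dimensions the underlying Green's function for the Laplacian is only logarithmically singular, and the corresponding FE estimates can be upgraded to the log-free rate $O(h^2)$.

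Concretely, the first step is to establish the log-free version of Proposition~\ref{prop:aux_estimate_hat_tilde},
\begin{align*}
\sum_{k=1}^2\|\bar{p}_{\alpha,k}-\hat p_k^h\|_{L^{1}(\Omega)}\lesssim h^2\qquad (d=2),
\end{align*}
by invoking the classical $L^\infty$/weighted estimates of Scott \cite{Scott73,Scott76} for FE approximations of the Poisson problem with point-mass right-hand side, together with Casas' $L^2$ theory for measure-valued data \cite{MR812624}. The same references also yield the sharpened pointwise estimate $|\bar y_\alpha(\bx_k^i)-\hat y^h(\bx_k^i)|\lesssim h^2$ once the interior Nitsche-type duality argument used in \cite[Lemma~4.4~(i)]{MR3973329} is run with the 2D Green's function bounds. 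These two improvements simultaneously remove the logarithmic pollution from (i), (ii) and, via the estimate \eqref{eq:estimate_tilde_hat_varphi}, also from (iii).

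Once these 2D-specific auxiliary estimates are in place, the proof of Theorem~\ref{thm:estimate_WSM} runs without modification. Every occurrence of $h^2|\log h|^2$ becomes $h^2$, and the ``mixed'' term in the estimate of $\mathsf{II}_1$ that produced $h^3|\log h|^2 + h\|\bar{u}_{\alpha}^{h} - \bar{u}_{\alpha}\|_{\U}$ now becomes $h^3 + h\|\bar{u}_{\alpha}^{h} - \bar{u}_{\alpha}\|_{\U}$, so that Young's inequality still absorbs the $\|\bar{u}_{\alpha}^{h} - \bar{u}_{\alpha}\|_{\U}^2$ contribution into the left-hand side. The resulting estimate is
\begin{align*}
\Bigl(\sum_{k=1}^2 \alpha_k\lambda_k\Bigr)\|\bar u_\alpha  - \bar{u}_{\alpha}^{h}\|_\U^{2}\lesssim h^{2},
\end{align*}
which after taking square roots gives the claimed optimal rate.

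The main obstacle is step one: carefully transferring Scott's two-dimensional pointwise and $L^1$ FE estimates for the Poisson problem with Dirac data to the present setting of the discrete adjoint \eqref{def:hat_varphi} (whose right-hand side is a finite linear combination of point evaluations against continuous finite-element test functions rather than genuine Dirac masses). Once the log-free $L^1$ bound for the adjoint difference and the log-free $L^\infty$ bound for the state difference at the tracking points are secured, the remainder of the argument is a routine repetition of the proof of Theorem~\ref{thm:estimate_WSM} with cleaner constants.
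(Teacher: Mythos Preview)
Your approach differs from the paper's, and in a way that both overshoots the target and leans on estimates that are harder to pin down than you suggest.

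First, an oversight in your diagnosis: only the bound for $\mathsf{I}$ actually needs sharpening. If you re-read the proof of Theorem~\ref{thm:estimate_WSM}, the final estimate for $\mathsf{II}_1$ is already $\mathsf{II}_1\le Ch^2+\tfrac{\alpha_1\lambda_1}{2}\|\bar u_\alpha^h-\bar u_\alpha\|_\U^2$; the intermediate terms $h^4|\log h|^4$ and $h^3|\log h|^2$ are both $o(h^2)$, so the logarithms there are harmless. The sole log-polluted contribution to the final bound is \eqref{eq:estimate_I1}.

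The paper exploits this and changes \emph{only} the estimate of $\mathsf{I}_1$, and it does so not by removing the logarithm from the $L^1$ adjoint estimate, but by switching norms: it keeps the nonpositivity \eqref{eq:estimate_I1_<=0}, applies Cauchy--Schwarz instead of the $L^1$--$L^\infty$ H\"older pairing, and uses the $L^2$ estimate $\|\bar p_{\alpha,1}-\hat p_1^h\|_\U\lesssim h$, valid for $d=2$ by \cite[Theorem~3]{MR812624}. Young's inequality then gives $\mathsf{I}_1\le Ch^2+\tfrac{\alpha_1\lambda_1}{4}\|\bar u_\alpha^h-\bar u_\alpha\|_\U^2$, and $\mathsf{II}$ is reused verbatim from Theorem~\ref{thm:estimate_WSM}.

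Your route instead asks for log-free versions of the $L^1(\Omega)$ adjoint bound and the interior $L^\infty$ state bound. Neither of these follows directly from \cite{Scott73,Scott76} or \cite{MR812624} in the form you need: the $L^\infty$ FE error on quasi-uniform meshes typically carries at least one $|\log h|$ factor even in 2D, and the $L^1$ rate $h^2$ for Dirac right-hand sides is not established in the cited references (\cite[Lemma~5.3]{MR3973329} gives $h^2|\log h|^2$ and is not claimed to be suboptimal). So the ``main obstacle'' you flag is a genuine one, and you have not supplied an argument for it. The paper sidesteps the obstacle entirely by moving to $L^2$, where Casas' estimate is classical.
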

\begin{proof}
    The proof relies on a more careful estimate for $\mathsf{I}$ in \eqref{eq:estimate_I}.
    To control this term, we concentrate, again, only on $\mathsf{I}_1$. 
    Writing $\mathsf I_1=  \alpha_1(\hat p_{1}^{h}-\bar{p}_{\alpha,1}^{h},\bar{u}_{\alpha}^{h}- \bar u_\alpha )_\U+ \alpha_1(\bar{p}_{\alpha,1}- \hat p_{1}^{h}, \bar{u}_{\alpha}^{h} - \bar u_\alpha )_\U$, using the estimate \eqref{eq:estimate_I1_<=0} and Cauchy-Schwarz inequality, we obtain 
    \begin{align*}
    \mathsf I_1
    \le \alpha_1(\bar{p}_{\alpha,1}- \hat p_{1}^{h}, \bar{u}_{\alpha}^{h}-\bar u_\alpha)_\U
    \leq \alpha_1\|\bar{p}_{\alpha,1}- \hat p_{1}^{h}\|_{\U}\|\bar{u}_{\alpha}^{h}-\bar u_\alpha\|_{\U}.
    \end{align*}
    On the other hand, the use of arguments similar to those that lead to the result of Proposition \ref{prop:aux_estimate_hat_tilde} in combination with the error estimate from \cite[Theorem 3]{MR812624} gives 
    \begin{align}\label{eq:analog_prop3.2}
     \|\bar{p}_{\alpha,1} - \hat p_{1}^{h}\|_{\U} \lesssim h,    
    \end{align}
    upon using that $d=2$.
    Hence, Young's inequality and the error estimate \eqref{eq:analog_prop3.2} allow us to conclude that
    \begin{equation*}
        \mathsf{I}_{1}
        \leq 
        \alpha_1\lambda_1\lambda_1^{-1}\|\bar{p}_{\alpha,1} - \hat p_{1}^{h}\|_{\U}\|\bar{u}_{\alpha}^{h} - \bar u_\alpha \|_{\U}
        \leq  Ch^{2} + \frac{\alpha_1\lambda_1}{4}\|\bar{u}_{\alpha}^{h} - \bar u_\alpha \|_{\U}^2, \quad (C>0).
    \end{equation*}
    Using, this estimate and its analogous version for $\mathsf{I}_{2}$, \eqref{eq:estimate_I}, and \eqref{eq:initial_estimate_I_II}, we obtain 
    \begin{align*}
    \left(\frac{3}{4}\sum_{k=1}^2\alpha_k\lambda_k\right)\|\bar u_\alpha  - \bar{u}_{\alpha}^{h}\|_\U^2
    \leq C h^{2} + \mathsf{II}.
    \end{align*}
    The term $\mathsf{II}$ is bounded as in the proof of Theorem \ref{thm:estimate_WSM}. 
    This concludes the proof.
\end{proof}

%%%%%%%%%%%%%%%%%%%%%%%%%%%%%%%%%%%%%%%%%%%%%%%%%%%%%%%%%%%%%%%%%
%%%%%%%%%%%%%%%%%%%%%%%%%%%%%%%%%%%%%%%%%%%%%%%%%%%%%%%%%%%%%%%%%

\subsection{Discretization of RPM}

Given $\zeta = (\zeta_1,\zeta_2) \in \mathcal{P}_{f}+\mathbb{R}^2_{\leq}$, we introduce the function $R_{\zeta}^{h}(u) \colonequals\nicefrac{1}{2}\left( (j_{1}^{h}(u) - \zeta_1)^2 + (j_{2}^{h}(u) - \zeta_2)^2\right)$ for every $u\in \U$.
The discrete version of the scalar-valued problem \eqref{eq:ref_point_prob} then reads
\begin{equation}\label{eq:ref_point_prob_discrete}
    \tag{$\mathbf{\hat P}_{\zeta}^{h}$}
    \min R_{\zeta}^{h}(u^h)\quad\text{s.t.}\quad u^h\in\Uadh.
\end{equation}

We have the following characterization for $(R_{\zeta}^{h})^{\prime}(u^h)$
\begin{align*}
    (R_{\zeta}^{h})^{\prime}(u^h)w^h =  \sum_{k=1}^{2}(j_{k}^{h}(u^h) - \zeta_{k})(p_{k}^{h} + \lambda_k u^h,w^h)_{\U} \quad \text{for all }w^h\in \mathbb{U}_{h}.
\end{align*}
With this characterization at hand, we present the following result, the proof of which follows as in Proposition \ref{prop:discrete_WSM}.

\begin{proposition}[Discrete RPM]
Let $\zeta =(\zeta_{1},\zeta_{2})\in \mathcal{P}_{f}+\mathbb{R}^2_{\leq}$.
Then, the scalar-valued optimal control problem \eqref{eq:ref_point_prob_discrete} has a unique solution $\bar{u}_{\zeta}^{h}\in \Uadh$.
Moreover, $\bar{u}_{\zeta}^{h}$ is an optimal solution to \eqref{eq:ref_point_prob_discrete} if and only if
\begin{equation}\label{eq:discrete_var_ineq_RPM}
 \sum_{k=1}^{2}(j_{k}^{h}(\bar{u}_{\zeta}^{h}) - \zeta_{k})(\bar{p}_{\zeta,k}^{h} + \lambda_k \bar{u}_{\zeta}^{h}, u^h - \bar{u}_{\zeta}^{h})_{\U} \geq 0 \quad \text{for all }u^h \in \Uadh,
\end{equation}
where $\bar{p}_{\zeta,k}^{h}$ denotes the unique solution to \eqref{eq:adjoint_eq_discrete} with $\bar{y}_{\zeta}^{h}=S_h\bar{u}_{\zeta}^{h}$.
\end{proposition}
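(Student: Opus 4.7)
The plan is to mirror the proof of Proposition \ref{prop:discrete_WSM} step by step, replacing the linear combination $W_\alpha^h$ by the quadratic distance functional $R_\zeta^h$.

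For existence, I would observe that the discrete control-to-state map $S_h$ is linear and bounded from $\U$ into $\Vh$, that point evaluations are continuous on the finite-dimensional space $\Vh$, and that the $\U$-norm is continuous. Consequently both $j_1^h$ and $j_2^h$, and hence $R_\zeta^h$, are continuous on $\U$. Since $\Uadh=\Uh\cap\Uad$ is a closed, bounded subset of the finite-dimensional space $\Uh$, it is compact, so Weierstra{\ss}' theorem immediately produces an optimal control $\bar u_\zeta^h\in\Uadh$.

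For uniqueness, I would establish strict convexity of $R_\zeta^h$ exactly as was done in the continuous case preceding \eqref{eq:ref_point_prob}: each $j_k^h$ is strictly convex on $\U$ (it is the sum of squares of affine-in-$u$ terms plus the strictly convex quadratic $\tfrac{\lambda_k}{2}\|u\|_\U^2$), and under the assumption $\zeta\in\mathcal P_f+\R^2_{\le}$ (coupled with the implicit discrete analogue of $\zeta\le\mathsf v^{\mathrm{id}}$ that guarantees $j_k^h(u)-\zeta_k\ge 0$ on $\Uadh$), each summand $(j_k^h-\zeta_k)^2$ is the composition of the strictly convex, nondecreasing map $t\mapsto\tfrac{1}{2}t^2$ on $[0,\infty)$ with the strictly convex, nonnegative function $j_k^h-\zeta_k$. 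Summing yields strict convexity of $R_\zeta^h$, which rules out a second minimizer on the convex set $\Uadh$.

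The variational inequality \eqref{eq:discrete_var_ineq_RPM} then comes from the standard first-order characterization for differentiable convex minimization over a convex set: $\bar u_\zeta^h$ is optimal if and only if $(R_\zeta^h)'(\bar u_\zeta^h)(u^h-\bar u_\zeta^h)\ge 0$ for every $u^h\in\Uadh$, and substituting the explicit formula for $(R_\zeta^h)'(u^h)$ recorded just before the proposition (together with $\bar p_{\zeta,k}^h$ solving \eqref{eq:adjoint_eq_discrete} for $\bar y_\zeta^h=S_h\bar u_\zeta^h$) produces exactly \eqref{eq:discrete_var_ineq_RPM}. The only nontrivial ingredient compared with the WSM proof is the justification of strict convexity, since the nonnegativity of $j_k^h-\zeta_k$ plays here the role that the trivial positivity $\alpha_k>0$ played in \eqref{eq:WSM_problem_discrete}; everything else is a direct transcription.
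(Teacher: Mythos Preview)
Your proposal is correct and follows exactly the paper's approach: the paper's proof simply reads ``the proof of which follows as in Proposition~\ref{prop:discrete_WSM},'' and you have spelled out that argument in full (Weierstra{\ss} on the compact set $\Uadh$ for existence, strict convexity of $R_\zeta^h$ for uniqueness, and the standard first-order characterization for \eqref{eq:discrete_var_ineq_RPM}). Your remark that the nonnegativity $j_k^h-\zeta_k\ge0$ underpins strict convexity---playing the role that $\alpha_k>0$ plays in the WSM case---is a detail the paper leaves implicit.
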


Given $\zeta$ fixed, we use Algorithm \ref{alg:discrete_RPM} to solve problem \eqref{eq:ref_point_prob_discrete}.
This algorithm is also based on a Barzilai-Borwein gradient method.
\begin{algorithm}[ht]
	\caption{Solving problem \eqref{eq:ref_point_prob_discrete} with $\zeta$ fixed}
    \label{alg:discrete_RPM}
	\begin{algorithmic}[1]
        \REQUIRE{Problem data, $\zeta=(\zeta_1,\zeta_2)$ fixed, $tol=1$, and $u^{h,-1},u^{h,0}\in \Uadh$ with $u^{h,-1}\neq u^{h,0}$;}
        \STATE{Set $l=0$;}
        \WHILE{$tol > 10^{-8}$}
            \STATE{Obtain $y^{h,l}\in\Vh$ by solving \eqref{eq:discrete_Poisson} with $u=u^{h,l}$;}
            \STATE{Obtain $p_{1}^{h,l},p_{2}^{h,l}\in \Vh$ by solving \eqref{eq:adjoint_eq_discrete} with $y^h=y^{h,l}$;}
            \STATE{Compute step size
            \begin{align*}
                t_l = \frac{((R_{\zeta}^{h})^{\prime}(u^{h,l})-(R_{\zeta}^{h})^{\prime}(u^{h,{l-1}}),(R_{\zeta}^{h})^{\prime}(u^{h,l})- (R_{\zeta}^{h})^{\prime}(u^{h,{l-1}}))_\U}{((R_{\zeta}^{h})^{\prime}(u^{h,l})-(R_{\zeta}^{h})^{\prime}(u^{h,{l-1}}), u^{h,l} - u^{h,{l-1}})_\U};
            \end{align*}}
            \STATE{Set $u^{h,{l+1}}= \min\{\ub,\max\{\ua,u^{h,l}- \nicefrac{1}{t_l}(R_{\zeta}^{h})^{\prime}(u^{h,l})\}\}$ and \\$u_{\mathrm{ref}}= \min\{\ub,\max\{\ua,u^{h,l}- (R_{\zeta}^{h})^{\prime}(u^{h,l})\}\}$;} 
            \STATE{Set $tol = \|u^{h,{l+1}} - u_{\mathrm{ref}}\|_{\U}$ and $l=l+1$;}
        \ENDWHILE
        \RETURN{Optimal solution $\bar{u}_{\zeta}^{h}$ and its objective value $j^h(\bar{u}_{\zeta}^{h})$.}
	\end{algorithmic}
\end{algorithm}

In Algorithm \ref{alg:WSM} we show how we compute the discrete approximation of the set of Pareto points and the corresponding front.

\begin{algorithm}[ht]
    \label{alg:RPM}
    \caption{Reference point method}
    \begin{algorithmic}[1]
        \REQUIRE{Number $\ell_{\mathrm{max}}\in \mathbb{N}$ of stationary points, problem data, parameters $\eta^{\perp},\eta^{\|}>0$, and parameter $\varepsilon \ll 1$:}
        \STATE{Compute $\bar{u}_{\alpha}^{h,\mathrm{init}}$ solution to \eqref{eq:WSM_problem_discrete} with $\alpha=(1-\varepsilon,\varepsilon)$ (initial point);} 
        \STATE{Compute $\bar{u}_{\alpha}^{h,\mathrm{end}}$ solution to \eqref{eq:WSM_problem_discrete} with $\alpha=(\varepsilon,1-\varepsilon)$ (ending point);}
        \STATE{Set $\mathcal{P}_{s}^h = \{\bar{u}_{\alpha}^{h,\mathrm{init}}\}\cup \{\bar{u}_{\alpha}^{h,\mathrm{end}}\}, \mathcal{P}_f^h = \{j^{h}(\bar{u}_{\alpha}^{h,\mathrm{init}})\}\cup \{ j^h(\bar{u}_{\alpha}^{h,\mathrm{end}})\}$ and $\ell=1$;}
        \WHILE{$\zeta_1^{\ell} < j_{1}^{h}(\bar{u}_{\alpha}^{h,\mathrm{end}})$ and $\ell \leq \ell_{\mathrm{max}} - 1$}
            \STATE{Solve problem \eqref{eq:ref_point_prob_discrete} using Algorithm \ref{alg:discrete_RPM} with reference point $\zeta^{\ell}$, save the solution $\bar{u}_{\zeta}^{h,\ell}$ and its evaluation $j^h(\bar{u}_{\zeta}^{h,\ell})$;}
            \STATE{Set $\mathcal{P}_{s}^h=\mathcal{P}_{s}^h \cup \{ \bar{u}_{\zeta}^{h,\ell}\}$, $\mathcal{P}_{f}^h=\mathcal{P}_{f}^h \cup \{ j^h(\bar{u}_{\zeta}^{h,\ell})\}$, and $\ell=\ell+1$;}
        \ENDWHILE
        \RETURN{Discrete approximations $\mathcal{P}_{s}^h$ and $\mathcal{P}_f^h$ of Pareto stationary points and front.}
    \end{algorithmic}
\end{algorithm}

An important question to tackle when using the RPM is how to choose the reference points $\zeta$ in the numerical implementation.
We perform this following the approach of \cite[Section 3.2]{MR4491072}.
To make matter precise, let $\ell_{\mathrm{max}}$ be the maximal number of Pareto stationary points in the numerical implementation and let $u^{h,0}$ denote an initial starting point being a stationary point of the weighted-sum problem with weights $\alpha_{1} = 1 - \alpha_{2}$ and $\alpha_2 = \varepsilon \ll 1$. 
Then, the first reference point $\zeta^1$ (corresponding to the second point on the front) is chosen as
\begin{align}\label{def:zeta_1}
    \zeta^{1} = j^{h}(u^{h,0})^{\intercal} - (\mathfrak{h}^{\perp}, \mathfrak{h}^{\|}),
\end{align}
where $\mathfrak{h}^{\perp}, \mathfrak{h}^{\|}>0$ are suitable scaling parameters. 
Then, for $\ell = 1,\ldots, \ell_{\mathrm{max}}-2$ we take
\begin{align}\label{def:general_zeta}
    \zeta^{\ell+1} = j^{h}(u^{h,\ell})^{\intercal} + \mathfrak{h}^{\|}\frac{\eta^{\|}}{\|\eta^{\|}\|_{\mathbb{R}^{2}}} + \mathfrak{h}^{\perp}\frac{\eta^{\perp}}{\|\eta^{\perp}\|_{\mathbb{R}^{2}}},
\end{align}
with $\eta^{\perp} = \zeta^{\ell} - j^h(u^{h,\ell})$ and $\eta^{\|}=(-\eta_{2}^{\perp}, \eta_{1}^{\perp})$.
Note that due to the strong weighting of $j_{1}^{h}(u^{h,0})$, the Pareto front is approximately vertical in the area of the first
reference point, which motivates the initial choice $\eta^{\|}=(0,-1)$ and $\eta^{\perp}=(-1,0)$ in \eqref{def:zeta_1}. Using this update technique, we end up with the reference point method stated in Algorithm \ref{alg:RPM}.

%%%%%%%%%%%%%%%%%%%%%%%%%%%%%%%%%%%%%%%%%%%%%%%%%%%%%%%%%%%%%%%%%
%%%%%%%%%%%%%%%%%%%%%%%%%%%%%%%%%%%%%%%%%%%%%%%%%%%%%%%%%%%%%%%%%

\subsubsection{Error estimates for RPM}

Let $\zeta = (\zeta_1,\zeta_2) \in \mathcal{P}_{f}+\mathbb{R}^2_{\leq}$ be fixed.
In this section, we prove convergence rates for $\|\bar u_\zeta  - \bar{u}_{\zeta}^{h}\|_\U$, where $\bar u_\zeta $ and $\bar{u}_{\zeta}^{h}$ denote the unique solutions to \eqref{eq:ref_point_prob} and \eqref{eq:ref_point_prob_discrete}, respectively.

\begin{lemma}[Auxiliary estimate]\label{lemma:ju-juh}
    Let $v,w \in \Uad$.
    Assume that $\ua,\ub$ are real constants.
    Then,
    \begin{align*}
        \sum_{k=1}^{2}|j_{k}(v) - j_{k}^{h}(w)|
        \lesssim
        h^{2}|\log h|^{2} + \|v - w\|_{\U}  \quad \text{for all } h < h_*.
    \end{align*}
\end{lemma}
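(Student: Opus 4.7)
The plan is to split the difference by interpolating the continuous objective at $w$, writing
\begin{align*}
\sum_{k=1}^{2}|j_{k}(v) - j_{k}^{h}(w)| \le \sum_{k=1}^{2}\bigl(|j_{k}(v) - j_{k}(w)| + |j_{k}(w) - j_{k}^{h}(w)|\bigr),
\end{align*}
and to estimate each piece separately. The first piece will produce the $\|v-w\|_{\U}$ term, while the second will produce the $h^{2}|\log h|^{2}$ term.

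For the first piece I would expand the difference of squares, i.e.\
\begin{align*}
\bigl((Sv)(\bx_k^i) - y_k^i\bigr)^2 - \bigl((Sw)(\bx_k^i) - y_k^i\bigr)^2 = \bigl(S(v-w)\bigr)(\bx_k^i)\cdot\bigl((Sv)(\bx_k^i)+(Sw)(\bx_k^i)-2y_k^i\bigr),
\end{align*}
together with $\|v\|_{\U}^{2}-\|w\|_{\U}^{2}=(v-w,v+w)_{\U}$. Since $v,w\in\Uad$ are uniformly bounded in $\U$, and the continuous embedding $\Y\hookrightarrow C(\overline{\Omega})$ combined with the stability of $S:\U\to\Y$ gives $\|Sv\|_{C(\overline{\Omega})}+\|Sw\|_{C(\overline{\Omega})}\lesssim 1$, the second factor above is uniformly bounded, and $|(S(v-w))(\bx_k^i)|\lesssim\|v-w\|_{\U}$. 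Summing over $i$ and $k$ yields $\sum_{k}|j_k(v)-j_k(w)|\lesssim \|v-w\|_{\U}$.

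For the second piece, the regularization terms $\frac{\lambda_k}{2}\|w\|_{\U}^{2}$ cancel, leaving only the tracking parts. The same difference-of-squares factorization gives
\begin{align*}
|j_k(w)-j_k^h(w)| \lesssim \sum_{i=1}^{n_k}\bigl|(Sw)(\bx_k^i)-(S_hw)(\bx_k^i)\bigr|,
\end{align*}
uniformly in $w\in\Uad$. Here the main ingredient, and essentially the only non-routine step, is a pointwise $L^{\infty}$-estimate for $Sw-S_h w$ on a neighborhood of $\Omega_k$. Since $\ua,\ub$ are real constants we have $w\in L^{\infty}(\Omega)$ with $\|w\|_{L^{\infty}(\Omega)}\le\max\{|\ua|,|\ub|\}$, so fixing smooth sets $\Omega_k\subset\omega_k\Subset\widehat{\omega}_k\Subset\Omega$ and invoking the local maximum-norm estimate of \cite[Lemma 4.4 (i)]{MR3973329}—exactly as in the proof of Proposition~\ref{prop:aux_estimate_hat_tilde}—yields $\|Sw-S_h w\|_{L^{\infty}(\omega_k)}\lesssim h^{2}|\log h|^{2}$ for all $h<h_*$. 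Applied at the finitely many points in $\Omega_k$ this gives $\sum_{k}|j_k(w)-j_k^h(w)|\lesssim h^{2}|\log h|^{2}$.

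Combining the two estimates yields the desired bound. The only delicate point is the pointwise error estimate on $\omega_k$, and this rests entirely on a verbatim reuse of the local maximum-norm argument already deployed in Proposition~\ref{prop:aux_estimate_hat_tilde}; the rest of the proof is bookkeeping with the difference-of-squares identity and the uniform bounds inherited from $v,w\in\Uad$.
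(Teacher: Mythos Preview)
Your proof is correct and essentially the same as the paper's: both rely on the difference-of-squares identity, the uniform bounds inherited from $v,w\in\Uad$, and the local maximum-norm estimate \cite[Lemma~4.4\,(i)]{MR3973329} for $Sw-S_hw$. The only cosmetic difference is that the paper applies the difference of squares directly between $j_k(v)$ and $j_k^h(w)$ and then splits the resulting state error $|y_v(\bx_k^i)-y_w^h(\bx_k^i)|$ via $y_w$, whereas you split at the level of the objectives via $j_k(w)$; the ingredients and the work are identical.
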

\begin{proof}
We begin by estimating the term $|j_{1}(v) - j_{1}^{h}(w)|$.
Let $y_v,y_w \in V$ be the unique solutions to 
\begin{align*}
    &(y_v,\varphi)_V= ( v,\varphi)_\U \quad \text{for all }\varphi\in V,\\
    &(y_w,\varphi)_V= ( w,\varphi)_\U \quad \text{for all }\varphi\in V,
\end{align*}
and let $y_{v}^{h}, y_{w}^{h} \in \mathbb{V}_{h}$ be their corresponding FE approximations, i.e., 
\begin{align*}
    &(y_{v}^{h},\varphi^h)_V= ( v,\varphi^h)_\U \quad \text{for all }\varphi^h\in \mathbb{V}_{h},\\
    &(y_{w}^{h},\varphi^h)_V= ( w,\varphi^h)_\U \quad \text{for all }\varphi^h\in \mathbb{V}_{h}.
\end{align*}
Then, we have that
\begin{align*}
|j_{1}(v) - j_{1}^{h}(w)|
\leq
\frac{1}{2}\sum_{i=1}^{n_1}|y_v(\bx_{1}^{i}) - y_{w}^{h}(\bx_{1}^{i})||y_v(\bx_{1}^{i}) + y_{w}^{h}(\bx_{1}^{i}) - 2y_{1}^{i}| + \frac{\lambda_{1}}{2}|(v-w,v+w)_{\U}|.
\end{align*}
Using the fact that $v,w\in \Uad$ and that (see \cite[Lemma 4.4 (i)]{MR3973329})
\begin{equation*}
|y_{w}^{h}(\bx_{1}^{i})| \leq |y_{w}^{h}(\bx_{1}^{i}) - y_{w}(\bx_{1}^{i})| + \|y_{w}\|_{L^{\infty}(\Omega)} \lesssim h^{2}|\log h|^{2} + \|w\|_{\U} \leq C, \quad i\in \{1,\ldots,n_1\},
\end{equation*}
we infer the estimate
\begin{align}\label{eq:j1v-jh1w}
|j_{1}(v) - j_{1}^{h}(w)|
\lesssim
\sum_{i=1}^{n_1}|y_v(\bx_{1}^{i}) - y_{w}^{h}(\bx_{1}^{i})| + \|v-w\|_{\U},
\end{align}
with a hidden constant that is independent of the discretization parameter. 
The term $\sum_{i=1}^{n_1}|y_v(\bx_{1}^{i}) - y_{w}^{h}(\bx_{1}^{i})|$ in \eqref{eq:j1v-jh1w} is controlled using $y_{w}$, the stability estimate $\|y_{v} - y_{w}\|_{L^{\infty}(\Omega)} \lesssim \|v-w\|_{\U}$, and \cite[Lemma 4.4 (i)]{MR3973329}:
\begin{equation*}
\sum_{i=1}^{n_1}|y_v(\bx_{1}^{i}) - y_{w}^{h}(\bx_{1}^{i})|
\leq 
n_1\|y_{v} - y_{w}\|_{L^{\infty}(\Omega)} + \sum_{i=1}^{n_1}|y_w(\bx_{1}^{i}) - y_{w}^{h}(\bx_{1}^{i})|
\lesssim
\|v-w\|_{\U} + h^{2}|\log h|^{2}.
\end{equation*}
Using this estimate in \eqref{eq:j1v-jh1w} we obtain $|j_{1}(v) - j_{1}^{h}(w)| \lesssim h^{2}|\log h|^{2} + \|v-w\|_{\U}$.
The estimation of $|j_{2}(v) - j_{2}^{h}(w)|$ follows analogous arguments. This concludes the proof.
\end{proof}

Before presenting the next result, we introduce $\tilde y^h \in \mathbb{V}_{h}$ as the unique solution to 
\begin{align}
    \label{def:tilde_y}
    (\tilde y^h,\varphi^h)_V= ( \bar u_\zeta,\varphi^h)_\U \quad \text{for all }\varphi^h\in \Vh,
\end{align}
and, for $k\in\{1,2\}$, the auxiliary variable $\tilde p_{k}^{h}\in \Vh$, solution to
\begin{align}
    \label{def:tilde_p}
    (\varphi^h,\tilde p_{k}^{h})_V=\sum_{i=1}^{n_k}\left(\tilde y^h(\bx_k^i) - y_k^i\right)\varphi^h(\bx_k^i) \quad \text{for all }\varphi^h \in \Vh.
\end{align}

\begin{theorem}[Error estimate: RPM]\label{thm:estimate_RPM}
    Let $\bar u_\zeta $ and $\bar{u}_{\zeta}^{h}$ be the unique solutions to \eqref{eq:ref_point_prob} and \eqref{eq:ref_point_prob_discrete}, respectively.
    Assume that $\ua,\ub\in \R$.
    Then, we have the error estimate
    \begin{align*}
      \sum_{k=1}^2 \lambda_k\left(\frac{3j_k(\bar{u}_{\zeta}) + j_{k}^{h}(\bar{u}_{\zeta}^{h})}{2} - 2\zeta_k\right)\|\bar u_\zeta  - \bar{u}_{\zeta}^{h}\|_\U^2\lesssim h^2|\log h|^2\quad \text{for all } h < h_*.
    \end{align*}
    In particular, if $h<h_*$ is small enough, then we have $\|\bar u_\zeta  - \bar{u}_{\zeta}^{h}\|_\U\lesssim h|\log h|$.
\end{theorem}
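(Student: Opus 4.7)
The approach is to adapt the proof of Theorem~\ref{thm:estimate_WSM} to the \emph{adaptive} weights $\mu_k\colonequals j_k(\bar u_\zeta)-\zeta_k$ and $\mu_k^h\colonequals j_k^h(\bar u_\zeta^h)-\zeta_k$ that appear in \eqref{eq:var_ineq_RPM} and \eqref{eq:discrete_var_ineq_RPM}, respectively. I would first test \eqref{eq:var_ineq_RPM} with $u=\bar u_\zeta^h$ and \eqref{eq:discrete_var_ineq_RPM} with $u^h=\pi_0\bar u_\zeta$, where $\pi_0\colon\U\to\Uh$ is the $L^2$-orthogonal projection (note $\pi_0\bar u_\zeta\in\Uadh$ because $\ua,\ub\in\R$), and add the two inequalities. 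Since $\mu_k\ne\mu_k^h$, the ``mass'' block no longer collapses into a single coercive contribution as in the WSM proof; instead, for each $k$, the orthogonality identity $(\bar u_\zeta^h,\pi_0\bar u_\zeta)_\U=(\bar u_\zeta^h,\bar u_\zeta)_\U$ yields
\begin{equation*}
\mu_k(\bar u_\zeta,\bar u_\zeta^h-\bar u_\zeta)_\U+\mu_k^h(\bar u_\zeta^h,\pi_0\bar u_\zeta-\bar u_\zeta^h)_\U=-\tfrac{\mu_k+\mu_k^h}{2}\|\bar u_\zeta-\bar u_\zeta^h\|_\U^2+\tfrac{\mu_k-\mu_k^h}{2}\bigl(\|\bar u_\zeta^h\|_\U^2-\|\bar u_\zeta\|_\U^2\bigr),
\end{equation*}
so that after multiplying by $\lambda_k$ and summing in $k$ the natural coercivity factor becomes $\lambda_k(\mu_k+\mu_k^h)/2$. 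Expanding $\mu_k-\mu_k^h$ via the definitions of $j_k,j_k^h$ separates out a nonnegative quadratic $\tfrac{\lambda_k^2}{4}(\|\bar u_\zeta^h\|_\U^2-\|\bar u_\zeta\|_\U^2)^2$ which reinforces the left-hand side, together with a pointwise cross term controlled by the interior $L^\infty$-FE estimate \cite[Lemma~4.4(i)]{MR3973329} and Young's inequality.

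The adjoint block is split via $\mu_k\bar p_{\zeta,k}-\mu_k^h\bar p_{\zeta,k}^h=\mu_k(\bar p_{\zeta,k}-\bar p_{\zeta,k}^h)+(\mu_k-\mu_k^h)\bar p_{\zeta,k}^h$. The classical piece $\sum_k\mu_k(\bar p_{\zeta,k}-\bar p_{\zeta,k}^h,\bar u_\zeta^h-\bar u_\zeta)_\U$ is treated exactly as $\mathsf I$ in Theorem~\ref{thm:estimate_WSM}: inserting the auxiliary variable $\tilde p_k^h$ from \eqref{def:tilde_p} peels off the nonpositive contribution $-\mu_k\sum_i(\tilde y^h(\bx_k^i)-\bar y_\zeta^h(\bx_k^i))^2$ via the test-function identity producing \eqref{eq:estimate_I1_<=0}, while the remainder $(\bar p_{\zeta,k}-\tilde p_k^h,\bar u_\zeta^h-\bar u_\zeta)_\U$ is bounded by $h^2|\log h|^2$ using the RPM analogue of Proposition~\ref{prop:aux_estimate_hat_tilde} together with the $L^\infty$-boundedness of $\bar u_\zeta-\bar u_\zeta^h$. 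The piece $\sum_k\mu_k^h(\bar p_{\zeta,k}^h,\pi_0\bar u_\zeta-\bar u_\zeta)_\U$ is treated exactly as $\mathsf{II}$, exploiting $\pi_0$-orthogonality and the $W^{1,\infty}$-regularity of $\bar u_\zeta$. The genuinely new piece $\sum_k(\mu_k-\mu_k^h)(\bar p_{\zeta,k}^h,\bar u_\zeta^h-\bar u_\zeta)_\U$ has no WSM counterpart; here Lemma~\ref{lemma:ju-juh} is the essential new ingredient, supplying $|\mu_k-\mu_k^h|\lesssim h^2|\log h|^2+\|\bar u_\zeta-\bar u_\zeta^h\|_\U$, so that Cauchy--Schwarz together with Young's inequality produces a bound of order $h^4|\log h|^4$ plus a small multiple of $\|\bar u_\zeta-\bar u_\zeta^h\|_\U^2$ that is absorbed into the left-hand side without destroying coercivity.

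Collecting these estimates gives $\sum_k\lambda_k\tfrac{\mu_k+\mu_k^h}{2}\|\bar u_\zeta-\bar u_\zeta^h\|_\U^2\lesssim h^2|\log h|^2$. Since $\zeta\le\mathsf v^{\mathrm{id}}$ forces $\mu_k,\mu_k^h\ge 0$ (the latter at least for small $h$), we have $\tfrac{3\mu_k+\mu_k^h}{2}\le 3\cdot\tfrac{\mu_k+\mu_k^h}{2}$, which upgrades the bound to the form stated in the theorem. For the ``in particular'' statement, Lemma~\ref{lemma:ju-juh} applied with $v=w\in\{\bar u_\zeta,\bar u_\zeta^h\}$ ensures $\mu_k^h\to\mu_k$ as $h\to 0$, so $\sum_k\lambda_k(3\mu_k+\mu_k^h)/2$ is bounded from below by a strictly positive constant for all sufficiently small $h$; dividing yields $\|\bar u_\zeta-\bar u_\zeta^h\|_\U\lesssim h|\log h|$. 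The main obstacle is the bookkeeping around the cross contributions driven by $\mu_k-\mu_k^h$: Lemma~\ref{lemma:ju-juh} has to be combined with Young's inequality so that the extra $\|\bar u_\zeta-\bar u_\zeta^h\|_\U^2$-mass it introduces remains strictly smaller than the coercive factor and the $h$-remainder does not exceed $h^2|\log h|^2$ --- a weight/error coupling with no analogue in the WSM analysis.
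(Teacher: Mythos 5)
Your setup (testing \eqref{eq:var_ineq_RPM} with $u=\bar u_\zeta^h$ and \eqref{eq:discrete_var_ineq_RPM} with $u^h=\pi_0\bar u_\zeta$, adding, treating the pieces $\mu_k(\bar p_{\zeta,k}-\bar p_{\zeta,k}^h,\bar u_\zeta^h-\bar u_\zeta)_\U$ via $\tilde p_k^h$ and the sign argument of \eqref{eq:estimate_I1_<=0}, and the piece $\mu_k^h(\bar p_{\zeta,k}^h+\lambda_k\bar u_\zeta^h,\pi_0\bar u_\zeta-\bar u_\zeta)_\U$ as in Theorem \ref{thm:estimate_WSM}) matches the paper's starting point, and your mass-block identity is algebraically correct. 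The gap lies in how you dispose of the two couplings driven by $\mu_k-\mu_k^h=j_k(\bar u_\zeta)-j_k^h(\bar u_\zeta^h)$, namely the remainder of $\tfrac{\lambda_k}{2}(\mu_k-\mu_k^h)\bigl(\|\bar u_\zeta^h\|_\U^2-\|\bar u_\zeta\|_\U^2\bigr)$ after extracting the square, and $(\mu_k-\mu_k^h)(\bar p_{\zeta,k}^h,\bar u_\zeta^h-\bar u_\zeta)_\U$. Lemma \ref{lemma:ju-juh} only gives $|\mu_k-\mu_k^h|\lesssim h^2|\log h|^2+\|\bar u_\zeta-\bar u_\zeta^h\|_\U$, and the second factor in each product is itself of size $\|\bar u_\zeta-\bar u_\zeta^h\|_\U$ with an $O(1)$ constant (involving $\|\bar p_{\zeta,k}^h\|_\U$, the stability of $S_h$ at the points, and the box bounds). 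So all you get is $Ch^2|\log h|^2\|\bar u_\zeta-\bar u_\zeta^h\|_\U+C'\|\bar u_\zeta-\bar u_\zeta^h\|_\U^2$, and Young's inequality cannot make $C'$ small: it trades between two factors, but here both factors already scale like the error. Since your only coercivity on the left is $\sum_k\lambda_k(\mu_k+\mu_k^h)/2$, which can be arbitrarily small (small $\lambda_k$, $\zeta$ near the front), the claimed absorption of "a small multiple of $\|\bar u_\zeta-\bar u_\zeta^h\|_\U^2$" is unjustified and the argument does not close. Your framework also lacks the curvature terms $\sum_i(S_h(\bar u_\zeta-\bar u_\zeta^h)(\bx_k^i))^2$ on the left that could otherwise dominate the tracking part of these cross terms, because you never Taylor-expand the (quadratic) functionals.

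This is exactly the difficulty the paper's proof is designed to avoid. There, the exact Taylor expansions of $j_k$ and $j_k^h$ keep the second-order terms on the left, and an algebraic regrouping places the nonnegative square $\sum_k(j_k(\bar u_\zeta)-j_k^h(\bar u_\zeta^h))^2$ on the left as well, so that the only surviving cross term pairs $j_k(\bar u_\zeta)-j_k^h(\bar u_\zeta^h)$ with the same-argument consistency error $j_k(\bar u_\zeta)-j_k^h(\bar u_\zeta)=O(h^2|\log h|^2)$ (term $\mathsf I$ in \eqref{eq:estimate_I_II_III}); no $O(1)$-coefficient multiple of $\|\bar u_\zeta-\bar u_\zeta^h\|_\U^2$ ever appears on the right. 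In fact, if you add your two dangerous terms and expand $j_k^h$ around $\bar u_\zeta^h$, you find $\tfrac{\lambda_k}{2}\bigl(\|\bar u_\zeta^h\|_\U^2-\|\bar u_\zeta\|_\U^2\bigr)+(\bar p_{\zeta,k}^h,\bar u_\zeta^h-\bar u_\zeta)_\U=j_k^h(\bar u_\zeta^h)-j_k^h(\bar u_\zeta)+\tfrac12\sum_i(S_h(\bar u_\zeta-\bar u_\zeta^h)(\bx_k^i))^2$, which reproduces precisely the paper's square-completion; so the repair is to perform this regrouping (equivalently, follow the Taylor-expansion route leading to \eqref{eq:estimate_I_II_III} and the analogue of Proposition \ref{prop:aux_estimate_hat_tilde} for $\tilde p_k^h$ from \eqref{def:tilde_p}), or alternatively to first establish plain convergence $\|\bar u_\zeta-\bar u_\zeta^h\|_\U\to0$ and use it to render the offending coefficient $o(1)$ for small $h$ --- neither of which is what your Cauchy--Schwarz/Young bookkeeping provides. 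The final conversion of the coercivity factor ($\tfrac{3\mu_k+\mu_k^h}{2}\le 3\cdot\tfrac{\mu_k+\mu_k^h}{2}$ for $\mu_k^h\ge0$) and the positivity discussion for small $h$ are fine.
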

\begin{proof}
    Let $\pi_{0}:\U\to \Uh$ be the $\U$-orthogonal projection operator. 
    We consider $u = \bar{u}_{\zeta}^{h}$ in \eqref{eq:var_ineq_RPM} and $u^h = \pi_{0}\bar u_\zeta $ in \eqref{eq:discrete_var_ineq_RPM}, and add the obtained inequalities. 
    This results in
    \begin{align}\nonumber
           & \sum_{k=1}^{2}(j_{k}(\bar{u}_{\zeta}) - \zeta_{k})(\bar{p}_{\zeta,k} + \lambda_k \bar{u}_{\zeta}, \bar{u}_{\zeta}^{h} - \bar{u}_{\zeta})_\U +\sum_{k=1}^{2}(j_{k}^{h}(\bar{u}_{\zeta}^{h}) - \zeta_{k})(\bar{p}_{\zeta,k}^{h} + \lambda_k \bar{u}_{\zeta}^{h}, \bar u_\zeta - \bar{u}_{\zeta}^{h})_{\U} \\
            & ~ + \sum_{k=1}^{2}(j_{k}^{h}(\bar{u}_{\zeta}^{h}) - \zeta_{k})(\bar{p}_{\zeta,k}^{h} + \lambda_k \bar{u}_{\zeta}^{h}, \pi_{0}\bar u_\zeta - \bar{u}_{\zeta})_{\U} \geq 0.
            \label{eq:rpm_first_ineq}
    \end{align}
        The fact that the cost functions $j_k$ and $j_{k}^{h}$ ($k\in\{1,2\}$) are quadratic implies the following Taylor expansions:
        \begin{align*}
            & ~ j_{k}(\bar{u}_{\zeta}^{h}) =  j_{k}(\bar{u}_{\zeta}) + (\bar{p}_{\zeta,k} + \lambda_k \bar{u}_{\zeta}, \bar{u}_{\zeta}^{h} - \bar{u}_{\zeta})_\U + \frac{1}{2}\left(\sum_{i=1}^{n_k}(S(\bar{u}_{\zeta} - \bar{u}_{\zeta}^{h})(\bx_{k}^{i}))^{2} + \lambda_{k}\|\bar{u}_{\zeta} - \bar{u}_{\zeta}^{h}\|_{\U}^{2}\right),\\
            & ~ j_{k}^{h}(\bar{u}_{\zeta}) =  j_{k}^{h}(\bar{u}_{\zeta}^{h}) + (\bar{p}_{\zeta,k}^{h} + \lambda_k \bar{u}_{\zeta}^{h}, \bar u_\zeta - \bar{u}_{\zeta}^{h})_{\U} + \frac{1}{2}\left(\sum_{i=1}^{n_k}(S_{h}(\bar{u}_{\zeta} - \bar{u}_{\zeta}^{h})(\bx_{k}^{i}))^{2}  + \lambda_{k}\|\bar{u}_{\zeta} - \bar{u}_{\zeta}^{h}\|_{\U}^{2}\right).
        \end{align*}
        Using these expansions in \eqref{eq:rpm_first_ineq}, it follows that
        \begin{align*}
            &\sum_{k=1}^{2}(j_{k}(\bar{u}_{\zeta}) - \zeta_{k})\left(j_{k}(\bar{u}_{\zeta}^{h}) - j_{k}(\bar{u}_{\zeta}) - \tfrac{1}{2}\sum_{i=1}^{n_k}(S(\bar{u}_{\zeta} - \bar{u}_{\zeta}^{h})(\bx_{k}^{i}))^{2}-\tfrac{\lambda_{k}}{2}\|\bar{u}_{\zeta} - \bar{u}_{\zeta}^{h}\|_{\U}^{2}\right)  \\
            & + ~ \sum_{k=1}^{2}(j_{k}^{h}(\bar{u}_{\zeta}^{h}) - \zeta_{k})\left(j_{k}^{h}(\bar{u}_{\zeta}) - j_{k}^{h}(\bar{u}_{\zeta}^{h}) - \tfrac{1}{2}\sum_{i=1}^{n_k}(S_{h}(\bar{u}_{\zeta} - \bar{u}_{\zeta}^{h})(\bx_{k}^{i}))^{2}-\tfrac{\lambda_{k}}{2}\|\bar{u}_{\zeta} - \bar{u}_{\zeta}^{h}\|_{\U}^{2}\right)\\
            & + ~\sum_{k=1}^{2}(j_{k}^{h}(\bar{u}_{\zeta}^{h}) - \zeta_{k})(\bar{p}_{\zeta,k}^{h} + \lambda_k \bar{u}_{\zeta}^{h}, \pi_{0}\bar u_\zeta - \bar{u}_{\zeta})_{\U} \geq 0.
    \end{align*}
Hence, using the identity
\begin{align*}
    & \sum_{k=1}^{2}(j_{k}(\bar{u}_{\zeta}) - \zeta_{k})(j_{k}(\bar{u}_{\zeta}^{h}) - j_{k}(\bar{u}_{\zeta})) + \sum_{k=1}^{2}(j_{k}^{h}(\bar{u}_{\zeta}^{h}) - \zeta_{k})(j_{k}^{h}(\bar{u}_{\zeta}) - j_{k}^{h}(\bar{u}_{\zeta}^{h})) \\
    & ~ = 
     \sum_{k=1}^{2}(j_{k}(\bar{u}_{\zeta}) - \zeta_{k})(j_{k}(\bar{u}_{\zeta}^{h}) - j_{k}^{h}(\bar{u}_{\zeta}^{h}))
     + \sum_{k=1}^{2}(j_{k}(\bar{u}_{\zeta}) - \zeta_{k})(j_{k}^{h}(\bar{u}_{\zeta}^{h}) - j_{k}(\bar{u}_{\zeta})) \\
     & ~ + \sum_{k=1}^{2}(j_{k}^{h}(\bar{u}_{\zeta}^{h}) - \zeta_{k})(j_{k}^{h}(\bar{u}_{\zeta}) - j_{k}^{h}(\bar{u}_{\zeta}^{h})) \\
     & =  \sum_{k=1}^{2}(j_{k}(\bar{u}_{\zeta}) - \zeta_{k})(j_{k}(\bar{u}_{\zeta}^{h}) - j_{k}^{h}(\bar{u}_{\zeta}^{h})) - \sum_{k=1}^{2}(j_{k}(\bar{u}_{\zeta}) - j_{k}^{h}(\bar{u}_{\zeta}^{h}))^{2} \\
     & ~ + \sum_{k=1}^{2}(j_{k}^{h}(\bar{u}_{\zeta}^{h}) - \zeta_{k})(j_{k}^{h}(\bar{u}_{\zeta}) - j_{k}(\bar{u}_{\zeta})),
\end{align*}
we arrive at
\begin{align}\label{eq:initial_estimate_RPM}
    \begin{split}
            &\sum_{k=1}^{2}\frac{(j_{k}(\bar{u}_{\zeta}) - \zeta_{k})}{2}\left(\sum_{i=1}^{n_k}(S(\bar{u}_{\zeta} - \bar{u}_{\zeta}^{h})(\bx_{k}^{i}))^{2}+\lambda_{k}\|\bar{u}_{\zeta} - \bar{u}_{\zeta}^{h}\|_{\U}^{2}\right) + \sum_{k=1}^{2}(j_{k}(\bar{u}_{\zeta}) - j_{k}^{h}(\bar{u}_{\zeta}^{h}))^{2}\\
            & ~ + \sum_{k=1}^{2}\frac{(j_{k}^{h}(\bar{u}_{\zeta}^{h}) - \zeta_{k})}{2}\left(\sum_{i=1}^{n_k}(S_{h}(\bar{u}_{\zeta} - \bar{u}_{\zeta}^{h})(\bx_{k}^{i}))^{2}+\lambda_{k}\|\bar{u}_{\zeta} - \bar{u}_{\zeta}^{h}\|_{\U}^{2}\right)\\
             \leq ~  &  \sum_{k=1}^{2}(j_{k}(\bar{u}_{\zeta}) - \zeta_{k})(j_{k}(\bar{u}_{\zeta}^{h}) - j_{k}^{h}(\bar{u}_{\zeta}^{h})) + \sum_{k=1}^{2}(j_{k}^{h}(\bar{u}_{\zeta}^{h}) - \zeta_{k})(j_{k}^{h}(\bar{u}_{\zeta}) - j_{k}(\bar{u}_{\zeta})) \\
            & + ~\sum_{k=1}^{2}(j_{k}^{h}(\bar{u}_{\zeta}^{h}) - \zeta_{k})(\bar{p}_{\zeta,k}^{h} + \lambda_k \bar{u}_{\zeta}^{h}, \pi_{0}\bar u_\zeta - \bar{u}_{\zeta})_{\U}.
    \end{split}
\end{align}
Thus, using in \eqref{eq:initial_estimate_RPM} the identity 
\begin{align*}
    j_{k}(\bar{u}_{\zeta}^{h}) - j_{k}^{h}(\bar{u}_{\zeta}^{h})
    = ~ & 
    j_{k}(\bar{u}_{\zeta}) + (\bar{p}_{\zeta,k} + \lambda_{k}\bar{u}_{\zeta}, \bar{u}_{\zeta}^{h} - \bar{u}_{\zeta})_{\U} + \frac{1}{2}\sum_{i=1}^{n_k}(S(\bar{u}_{\zeta} - \bar{u}_{\zeta}^{h})(\bx_k^i))^{2} + \frac{\lambda_{k}}{2}\|\bar{u}_{\zeta} - \bar{u}_{\zeta}^{h}\|_{\U}^{2} \\
    & - j_{k}^{h}(\bar{u}_{\zeta}) - (\tilde{p}_{k}^{h} + \lambda_{k}\bar{u}_{\zeta}^{h}, \bar{u}_{\zeta}^{h} - \bar{u}_{\zeta})_{\U} - \frac{1}{2}\sum_{i=1}^{n_k}(S_h(\bar{u}_{\zeta} - \bar{u}_{\zeta}^{h})(\bx_k^i))^{2} - \frac{\lambda_{k}}{2}\|\bar{u}_{\zeta} - \bar{u}_{\zeta}^{h}\|_{\U}^{2}\\
= &  ~  (j_{k}(\bar{u}_{\zeta}) -  j_{k}^{h}(\bar{u}_{\zeta})) + (\bar{p}_{\zeta,k} - \tilde{p}_{k}^{h}, \bar{u}_{\zeta}^{h} - \bar{u}_{\zeta})_{\U} - \lambda_{k}\|\bar{u}_{\zeta} - \bar{u}_{\zeta}^{h}\|_{\U}^2\\
 & ~ + \frac{1}{2}\sum_{i=1}^{n_k}(S(\bar{u}_{\zeta} - \bar{u}_{\zeta}^{h})(\bx_k^i))^{2} - \frac{1}{2}\sum_{i=1}^{n_k}(S_h(\bar{u}_{\zeta} - \bar{u}_{\zeta}^{h})(\bx_k^i))^{2},
\end{align*}
which follows from a Taylor expansion, we conclude that 
    \begin{align}\label{eq:estimate_I_II_III}
    \begin{split}
            &\left(\sum_{k=1}^{2}\left(\frac{3j_{k}(\bar{u}_{\zeta}) + j_{k}^{h}(\bar{u}_{\zeta}^{h})}{2} - 2\zeta_{k}\right)\lambda_{k}\right)\|\bar{u}_{\zeta} - \bar{u}_{\zeta}^{h}\|_{\U}^{2} + \sum_{k=1}^{2}(j_{k}(\bar{u}_{\zeta}) - j_{k}^{h}(\bar{u}_{\zeta}^{h}))^{2}\\
            & ~ + \sum_{k=1}^{2}\left(\frac{j_{k}(\bar{u}_{\zeta}) + j_{k}^{h}(\bar{u}_{\zeta}^{h})}{2} - \zeta_{k}\right)\left(\sum_{i=1}^{n_k}(S_{h}(\bar{u}_{\zeta} - \bar{u}_{\zeta}^{h})(\bx_{k}^{i}))^{2}\right)\\
             \leq ~  &  \sum_{k=1}^{2}(j_{k}(\bar{u}_{\zeta}) - j_{k}^{h}(\bar{u}_{\zeta}^{h}))(j_{k}(\bar{u}_{\zeta}) -  j_{k}^{h}(\bar{u}_{\zeta})) + \sum_{k=1}^{2}(j_{k}(\bar{u}_{\zeta}) - \zeta_{k})(\bar{p}_{\zeta,k} - \tilde{p}_{k}^{h}, \bar{u}_{\zeta}^{h} - \bar{u}_{\zeta})_{\U}\\
             &  + \sum_{k=1}^{2}(j_{k}^{h}(\bar{u}_{\zeta}^{h}) - \zeta_{k})(\bar{p}_{\zeta,k}^{h} + \lambda_k \bar{u}_{\zeta}^{h}, \pi_{0}\bar u_\zeta - \bar{u}_{\zeta})_{\U} =: \mathsf{I} + \mathsf{II} + \mathsf{III}.
    \end{split}
    \end{align}
We recall that the auxiliary term $\tilde{p}_{k}^{h}$ ($k\in\{1,2\}$) is defined in \eqref{def:tilde_p}.
We immediately note that, for $h>0$ sufficiently small, the inequality $j_{k}^{h}(\bar{u}_{\zeta}^{h}) - \zeta_{k}>0$ ($k\in\{1,2\}$) holds; this follows from the fact that $\|\bar{u}_{\zeta} - \bar{u}_{\zeta}^{h}\|_{\U} \to 0$ when $h \to 0$ in combination with Lemma \ref{lemma:ju-juh}.
Consequently, all the terms on the left-hand side of \eqref{eq:estimate_I_II_III} are nonnegative when $h>0$ is small enough.

We now estimate $\mathsf{I}$, $\mathsf{II}$, and $\mathsf{III}$. 
To estimate $\mathsf{I}$ we use Lemma \ref{lemma:ju-juh} and Young's inequality to arrive at 
\begin{equation*}
\mathsf{I}
\lesssim
 (h^{2}|\log h|^{2} + \|\bar{u}_{\zeta} - \bar{u}_{\zeta}^{h}\|_{\U})h^{2}|\log h|^{2}
 \lesssim
 (1 + \varepsilon^{-1}) h^{4}|\log h|^{4} + \varepsilon\|\bar{u}_{\zeta} - \bar{u}_{\zeta}^{h}\|_{\U}^{2},
\end{equation*}
with $\varepsilon > 0$ arbitrary.
To control the term $\mathsf{II}$, we use H\"older's inequality, an analogous version of Proposition \ref{prop:aux_estimate_hat_tilde} for $\|\bar{p}_{\zeta,k} - \tilde{p}_{k}^{h}\|_{L^{1}(\Omega)}$, and the fact that $\bar{u}_{\zeta}^{h},\bar{u}_{\zeta}\in \Uad$ to obtain
\begin{equation*}
\mathsf{II}
\lesssim
 \|\bar{p}_{\zeta,k} - \tilde{p}_{k}^{h}\|_{L^{1}(\Omega)}\|\bar{u}_{\zeta}^{h} - \bar{u}_{\zeta}\|_{L^{\infty}(\Omega)}
 \lesssim
  h^{2}|\log h|^2.
\end{equation*}
The term $\mathsf{III}$ can be bounded as the term $\mathsf{II}$ in the proof of Theorem \ref{thm:estimate_WSM} (cf. \eqref{eq:estimate_II}). 
Thus, we obtain that 
\begin{equation}\label{eq:final_III_RPM}
\mathsf{III}
\lesssim
 \varepsilon^{-1} h^2 + \varepsilon \|\bar{u}_{\zeta}^{h} - \bar{u}_{\zeta}\|_{\U}^{2},
\end{equation}
with $\varepsilon > 0$ arbitrary, upon using that $\sum_{k=1}^{2}|j_{k}^{h}(\bar{u}_{\zeta}^{h}) - \zeta_{k}|\leq C$, for $C>0$.

We conclude the desired bound by replacing the estimates obtained for $\mathsf{I}$, $\mathsf{II}$, and $\mathsf{III}$ in \eqref{eq:estimate_I_II_III} and taking $\varepsilon > 0$ small enough.
\end{proof}

As in the discrete approximation of the weighted-sum method (see Theorem~\ref{Theorem:ImprovedErrorWSM}), we can improve the error estimate of Theorem \ref{thm:estimate_RPM} in two dimensions.

\begin{theorem}[Improved error estimate: RPM]\label{thm:improved_RPM}
    Let $d = 2$. In the framework of Theorem~{\em\ref{thm:estimate_RPM}}, we have the following optimal error estimate:
        \begin{align*}
        \|\bar u_\zeta  - \bar{u}_{\zeta}^{h}\|_\U\lesssim h \quad \text{for all } 0 < h < h_{*}.
    \end{align*}
\end{theorem}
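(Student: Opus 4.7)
The plan is to revisit the proof of Theorem~\ref{thm:estimate_RPM}, retaining the decomposition \eqref{eq:estimate_I_II_III} into $\mathsf{I}$, $\mathsf{II}$, $\mathsf{III}$, and to sharpen only the bound for the adjoint-approximation term $\mathsf{II}$, exploiting the same two-dimensional FE estimate for the Dirac-measure-data adjoint that was used in the proof of Theorem~\ref{Theorem:ImprovedErrorWSM} (namely the analogue of \eqref{eq:analog_prop3.2}, itself a consequence of \cite[Theorem 3]{MR812624}).

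The first step is to keep the bound for $\mathsf{I}$ unchanged. The estimate in Theorem~\ref{thm:estimate_RPM} already gives $\mathsf{I}\lesssim(1+\varepsilon^{-1})h^{4}|\log h|^{4}+\varepsilon\|\bar u_\zeta-\bar u_\zeta^{h}\|_\U^{2}$ via Lemma~\ref{lemma:ju-juh}, which is well within an $\mathcal{O}(h^{2})$ budget.

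The second, and only substantive, step is to rebound $\mathsf{II}=\sum_{k=1}^{2}(j_{k}(\bar u_\zeta)-\zeta_{k})(\bar p_{\zeta,k}-\tilde p_{k}^{h},\bar u_\zeta^{h}-\bar u_\zeta)_\U$. Instead of the $L^{1}$--$L^{\infty}$ Hölder pairing used in Theorem~\ref{thm:estimate_RPM}, I would apply the Cauchy--Schwarz inequality in $\U$ and invoke the two-dimensional RPM analogue of \eqref{eq:analog_prop3.2}, i.e.\ $\|\bar p_{\zeta,k}-\tilde p_{k}^{h}\|_\U\lesssim h$, obtained by adapting Proposition~\ref{prop:aux_estimate_hat_tilde} with the $L^{2}$-estimate from \cite[Theorem 3]{MR812624} in place of the $L^{1}$-estimate of \cite[Lemma 5.3]{MR3973329}. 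Since the coefficients $j_{k}(\bar u_\zeta)-\zeta_{k}$ are bounded, Young's inequality then yields
\begin{equation*}
\mathsf{II}\le C h^{2}+\varepsilon\|\bar u_\zeta-\bar u_\zeta^{h}\|_\U^{2}
\end{equation*}
for arbitrary $\varepsilon>0$.

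The third step is to retain the bound \eqref{eq:final_III_RPM} for $\mathsf{III}$ verbatim from the proof of Theorem~\ref{thm:estimate_RPM}: that argument already produces $\mathsf{III}\lesssim\varepsilon^{-1}h^{2}+\varepsilon\|\bar u_\zeta^{h}-\bar u_\zeta\|_\U^{2}$, and so needs no improvement. Finally, I would insert these three bounds into \eqref{eq:estimate_I_II_III}, observe that for $h$ small enough the coefficient $\sum_{k=1}^{2}\lambda_{k}\bigl(\tfrac{3j_{k}(\bar u_\zeta)+j_{k}^{h}(\bar u_\zeta^{h})}{2}-2\zeta_{k}\bigr)$ on the left-hand side is bounded below by a positive constant (this follows from $\zeta\le\mathsf{v}^{\mathrm{id}}\le j_{k}(\bar u_\zeta)$ together with $j_{k}^{h}(\bar u_\zeta^{h})\to j_{k}(\bar u_\zeta)$ by Lemma~\ref{lemma:ju-juh}), and choose $\varepsilon>0$ small enough to absorb the remaining $\|\bar u_\zeta-\bar u_\zeta^{h}\|_\U^{2}$ terms into the left-hand side. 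This produces the claimed $\mathcal{O}(h)$ bound.

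The main obstacle to check is really just the RPM analogue of \eqref{eq:analog_prop3.2}: one must verify that the $L^{2}$-convergence rate $\|\bar p_{\zeta,k}-\tilde p_{k}^{h}\|_\U\lesssim h$ remains valid in the present RPM setting (where $\tilde p_{k}^{h}$ is defined through \eqref{def:tilde_y}--\eqref{def:tilde_p} with $\bar u_\zeta$ rather than $\bar u_\alpha$), with a constant independent of $h$ and of the RPM-specific weights. This is a routine adaptation of the argument underlying Proposition~\ref{prop:aux_estimate_hat_tilde}, combined with the two-dimensional $L^{2}$ pointwise-source estimate of \cite[Theorem 3]{MR812624}, and carries over essentially verbatim. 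Once this ingredient is in hand, the rest of the argument is an immediate reshuffling of the computation in Theorem~\ref{thm:estimate_RPM}.
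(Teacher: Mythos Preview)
Your proposal is correct and follows essentially the same route as the paper: both retain the decomposition \eqref{eq:estimate_I_II_III}, both keep the bound \eqref{eq:final_III_RPM} for $\mathsf{III}$ unchanged, and both sharpen $\mathsf{II}$ via Cauchy--Schwarz together with the two-dimensional $L^{2}$ adjoint estimate $\|\bar p_{\zeta,k}-\tilde p_{k}^{h}\|_\U\lesssim h$ (the RPM analogue of \eqref{eq:analog_prop3.2}). The one difference is in the treatment of $\mathsf{I}$: the paper also re-estimates $\mathsf{I}$ by first deriving a 2D variant of Lemma~\ref{lemma:ju-juh} (namely $\sum_{k}|j_{k}(v)-j_{k}^{h}(w)|\lesssim h+\|v-w\|_\U$), which yields $\mathsf{I}\lesssim(1+\varepsilon^{-1})h^{2}+\varepsilon\|\bar u_\zeta-\bar u_\zeta^{h}\|_\U^{2}$; you instead observe that the bound already obtained in Theorem~\ref{thm:estimate_RPM}, $\mathsf{I}\lesssim(1+\varepsilon^{-1})h^{4}|\log h|^{4}+\varepsilon\|\bar u_\zeta-\bar u_\zeta^{h}\|_\U^{2}$, is well within the $\mathcal{O}(h^{2})$ budget and requires no further work. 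Your shortcut is legitimate and slightly more economical; the paper's extra step for $\mathsf{I}$ is not needed for the stated result.
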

\begin{proof}
     The proof relies on different estimates for $\mathsf{I}$ and $\mathsf{II}$ in \eqref{eq:estimate_I_II_III}.
     First, we note that similar arguments to the ones that lead to Lemma \ref{lemma:ju-juh} in combination with the error estimate from \cite[Theorem 3]{MR812624} give
    \begin{align}\label{eq:analog_lemma3.6}
    \sum_{k=1}^{2}|j_{k}(v) - j_{k}^{h}(w)|
    \lesssim
    h + \|v - w\|_{\U}. 
    \end{align}
    Then, we bound $\mathsf{I}$ using the estimate \eqref{eq:analog_lemma3.6} as follows:
    \begin{align*}
       \mathsf{I}
       \lesssim
       (h + \|\bar{u}_{\zeta} - \bar{u}_{\zeta}^{h}\|_{\U}^2)h 
       \lesssim
       (1 + \varepsilon^{-1})h^2 + \varepsilon\|\bar{u}_{\zeta} - \bar{u}_{\zeta}^{h}\|_{\U}^2,
       \end{align*}
    with $\varepsilon > 0$ arbitrary.
    For the term $\mathsf{II}$ in \eqref{eq:estimate_I_II_III}, we use the error bound $\|\bar{p}_{\zeta,k} - \tilde{p}_{k}^{h}\|_{\U} \lesssim h$ and obtain
\begin{align*}
     \mathsf{II}
     \lesssim
     \|\bar{p}_{\zeta,k} - \tilde{p}_{k}^{h}\|_{\U}\|\bar{u}_{\zeta}^{h} - \bar{u}_{\zeta}\|_{\U}
    \lesssim
       \varepsilon^{-1} h^{2} + \varepsilon\|\bar{u}_{\zeta} - \bar{u}_{\zeta}^{h}\|_{\U}^2,
\end{align*}
    with $\varepsilon > 0$ arbitrary.
    Therefore, the use of the previous two estimates and \eqref{eq:final_III_RPM} in \eqref{eq:estimate_I_II_III}, and taking $\varepsilon > 0$ and $h > 0$ sufficiently small, yields the desired result.        
\end{proof}

%%%%%%%%%%%%%%%%%%%%%%%%%%%%%%%%%%%%%%%%%%%%%%%%%%%%%%%%%%%%%%%%%
%%%%%%%%%%%%%%%%%%%%%%%%%%%%%%%%%%%%%%%%%%%%%%%%%%%%%%%%%%%%%%%%%
%%%%%%%%%%%%%%%%%%%%%%%%%%%%%%%%%%%%%%%%%%%%%%%%%%%%%%%%%%%%%%%%%
%%%%%%%%%%%%%%%%%%%%%%%%%%%%%%%%%%%%%%%%%%%%%%%%%%%%%%%%%%%%%%%%%

\section{Numerical examples}\label{sec:num_exp}

In this section, we present some numerical experiments that show the performance of the scalarization methods presented in Section \ref{sec:scala_tech} when approximating Pareto stationary points and front.
The experiments were carried out with a code implemented in \texttt{MATLAB$^\copyright$ (R2024a)}.

In both numerical examples, we consider the domains $\Omega=(0,1)^2$, $\Omega_{1}=\{(0.75,0.25)\}$, $\Omega_{2}=\{(0.25,0.75)\}$, the desired states $y_{1}^{1} = 6$, $y_{2}^{1}=-2$, and the bilateral bounds $\ua=-7$, $\ub=15$. For the regularization parameters $\lambda_1$ and $\lambda_2$ in \eqref{def:cost_funct}, we consider four different configurations: $(\lambda_1,\lambda_2)=(1,1)$, $\lambda_1=1$ and $\lambda_2=0.1$, $\lambda_1=0.1$ and $\lambda_2=1$, and $\lambda_1=\lambda_2=0.1$. 
In both numerical examples, we consider the domains $\Omega=(0,1)^2$, $\Omega_{1}=\{(0.75,0.25)\}$, $\Omega_{2}=\{(0.25,0.75)\}$, the desired states $y_{1}^{1} = 6$, $y_{2}^{1}=-2$, and the bilateral bounds $\ua=-7$, $\ub=15$. For the regularization parameters $\lambda_1$ and $\lambda_2$ in \eqref{def:cost_funct}, we consider four different configurations: $(\lambda_1,\lambda_2)=(1,1)$, $\lambda_1=1$ and $\lambda_2=0.1$, $\lambda_1=0.1$ and $\lambda_2=1$, and $\lambda_1=\lambda_2=0.1$. 
In both numerical examples, we consider the domains $\Omega=(0,1)^2$, $\Omega_{1}=\{(0.75,0.25)\}$, $\Omega_{2}=\{(0.25,0.75)\}$, the desired states $y_{1}^{1} = 6$, $y_{2}^{1}=-2$, and the bilateral bounds $\ua=-7$, $\ub=15$. For the regularization parameters $\lambda_1$ and $\lambda_2$ in \eqref{def:cost_funct}, we study four different configurations: $(\lambda_1,\lambda_2)=(1,1)$, $(\lambda_1,\lambda_2)=(1,0.1)$, $(\lambda_1,\lambda_2)=(0.1,1)$ and $(\lambda_1,\lambda_2)=(0.1,0.1)$. 

We finally mention that, in the absence of an exact solution, we compute the error committed in the approximation by taking as a reference solution $\bar{u}_{\alpha}$ (resp. $\bar{u}_{\zeta}$) the discrete optimal control $\bar{u}_{\alpha}^{h}$  (resp. $\bar{u}_{\zeta}^{h}$) obtained on a finer triangulation $\mathcal{T}_{h}$: the mesh $\mathcal{T}_{h}$ is such that $h = 2^{-8}$. 

\subsection{Example 1 (WSM).} 

We present the results obtained for this example in Figures \ref{fig:ex1_1}-\ref{fig:ex1_3}, and Table \ref{table:ex1}.
In Fig. \ref{fig:ex1_1}, the Pareto fronts and the approximation error $\|\mathcal{J}(\bar{y}_{\alpha}^{h},\bar{u}_{\alpha}^{h}) - \mathcal{J}(\bar{y}_{\alpha},\bar{u}_{\alpha})\|_{2}$ are shown with $50$ different values of $\alpha$ considering mesh refinement using the weighted-sum method, and for the four different configurations of $(\lambda_1,\lambda_2)$ mentioned above. For these four configurations, we observe that the approximate Pareto front seems to converge to the continuous one as the discretization parameter $h$ decreases. 
We also observe that, even for the same problem data, the shape of the curves that represent the Pareto fronts changes as the values of $\lambda_1$ and $\lambda_2$ change.
In Fig. \ref{fig:ex1_3}, we show the approximate optimal control $\bar{u}_{\alpha}^{h}$ obtained with $\lambda_1=\lambda_2=0.1$, $h=2^{-6}$, and for four different instances of $\alpha$, namely $\alpha = (0.2,0.8)$, $\alpha = (0.4,0.6)$, $\alpha = (0.6,0.4)$, and $\alpha = (0.8,0.2)$.
Finally, in Table \ref{table:ex1}, we present the approximation error $\|\bar{u}_{\alpha} - \bar{u}_{\alpha}^{h}\|_{\U}$ for different cases of $\alpha$.
We observe that optimal experimental rates of convergence $\mathcal{O}(h)$ are attained for the four different choices of $\alpha$, which is in agreement with Theorem \ref{Theorem:ImprovedErrorWSM}.

\begin{figure}
\begin{center}
\begin{minipage}[c]{0.38\textwidth}\centering
\includegraphics[trim={0 0 0 0},clip,width=6.0cm,height=4.0cm,scale=0.1]{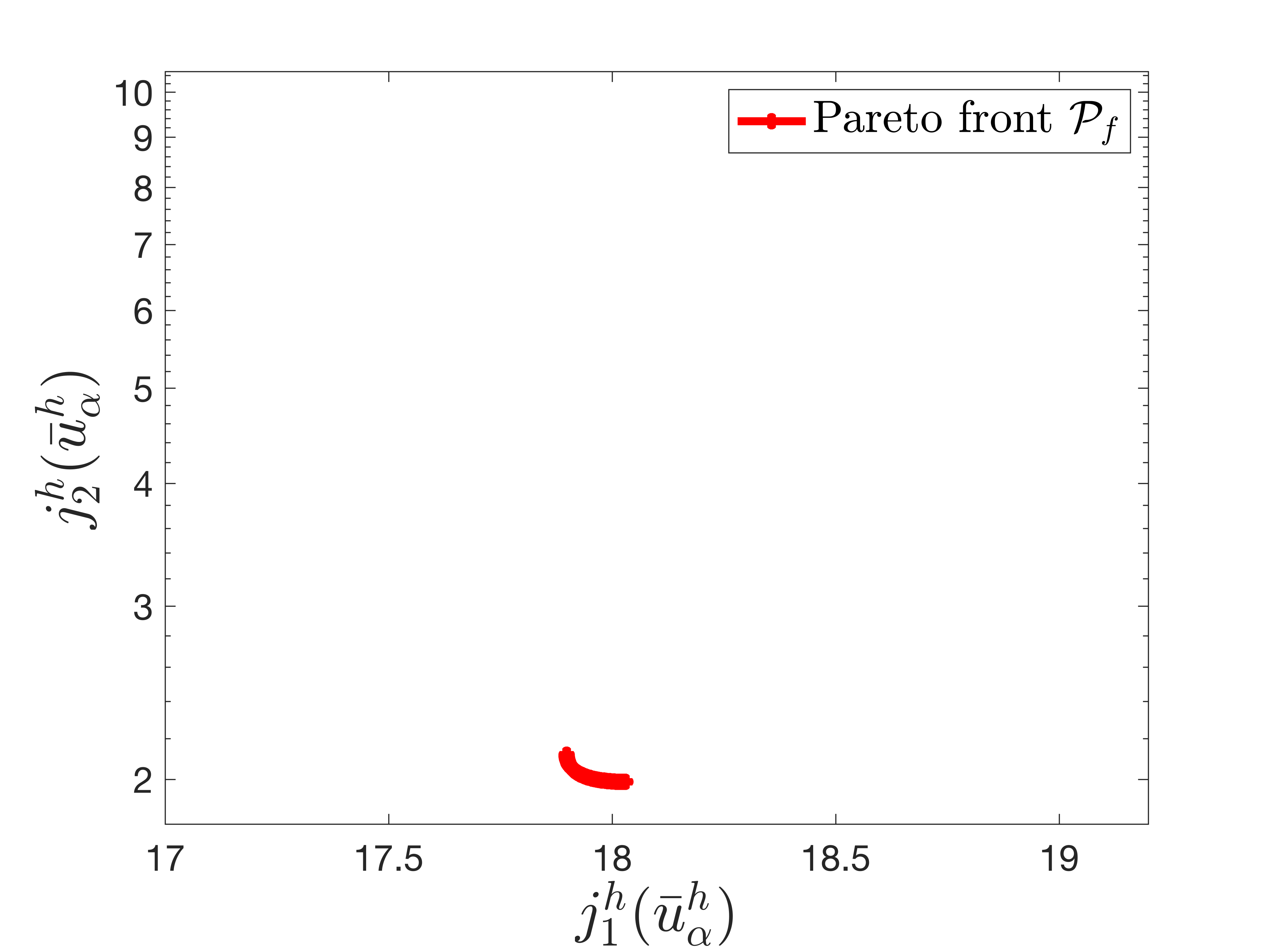}
\qquad
{\small{(1.A)}}
\end{minipage}
\begin{minipage}[c]{0.38\textwidth}\centering
\includegraphics[trim={0 0 0 0},clip,width=5.5cm,height=4.0cm,scale=0.1]{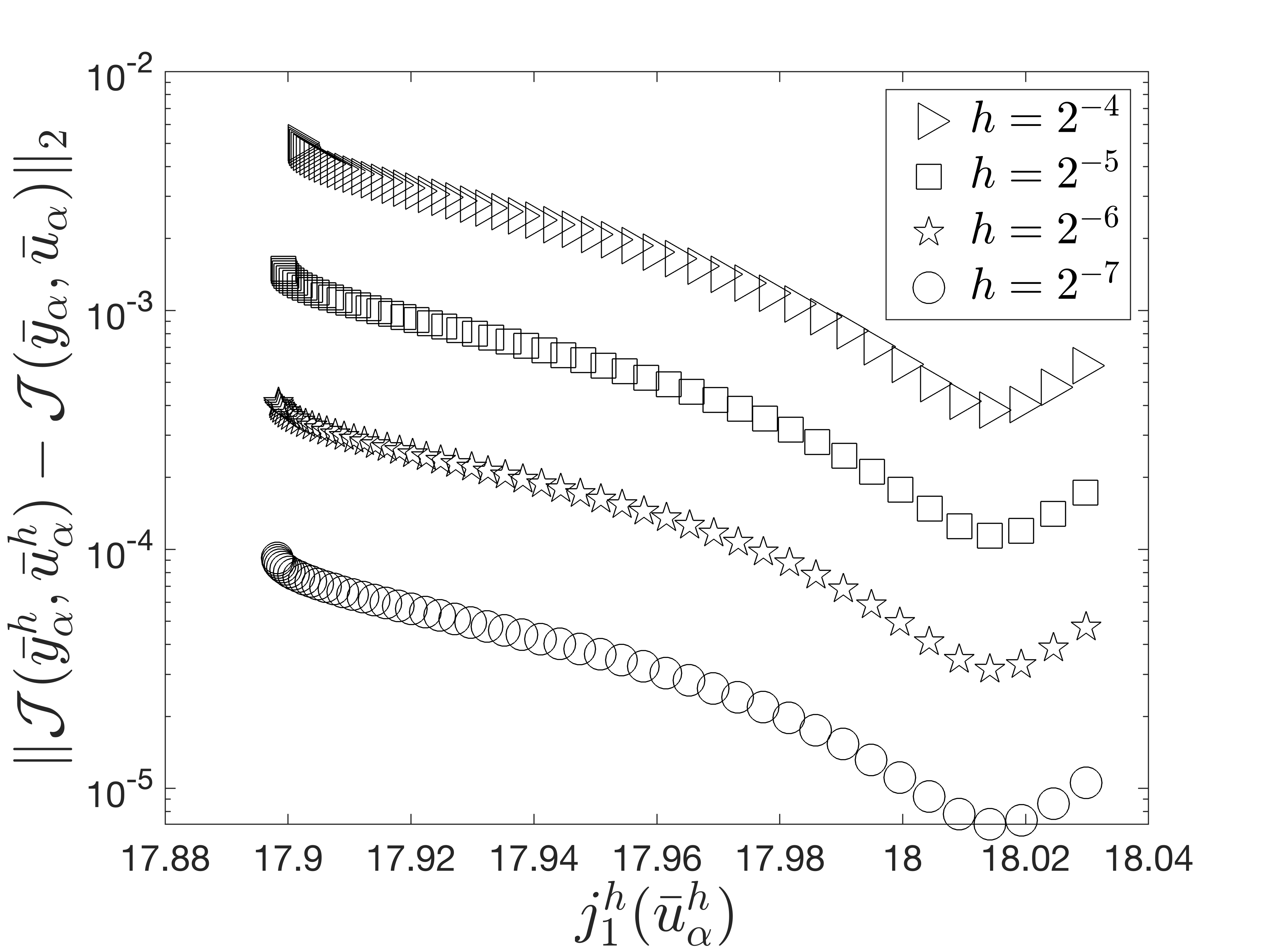}
\qquad
{\small{(1.B)}}
\end{minipage}
\begin{minipage}[c]{0.38\textwidth}\centering
\includegraphics[trim={0 0 0 0},clip,width=5.5cm,height=4.0cm,scale=0.1]{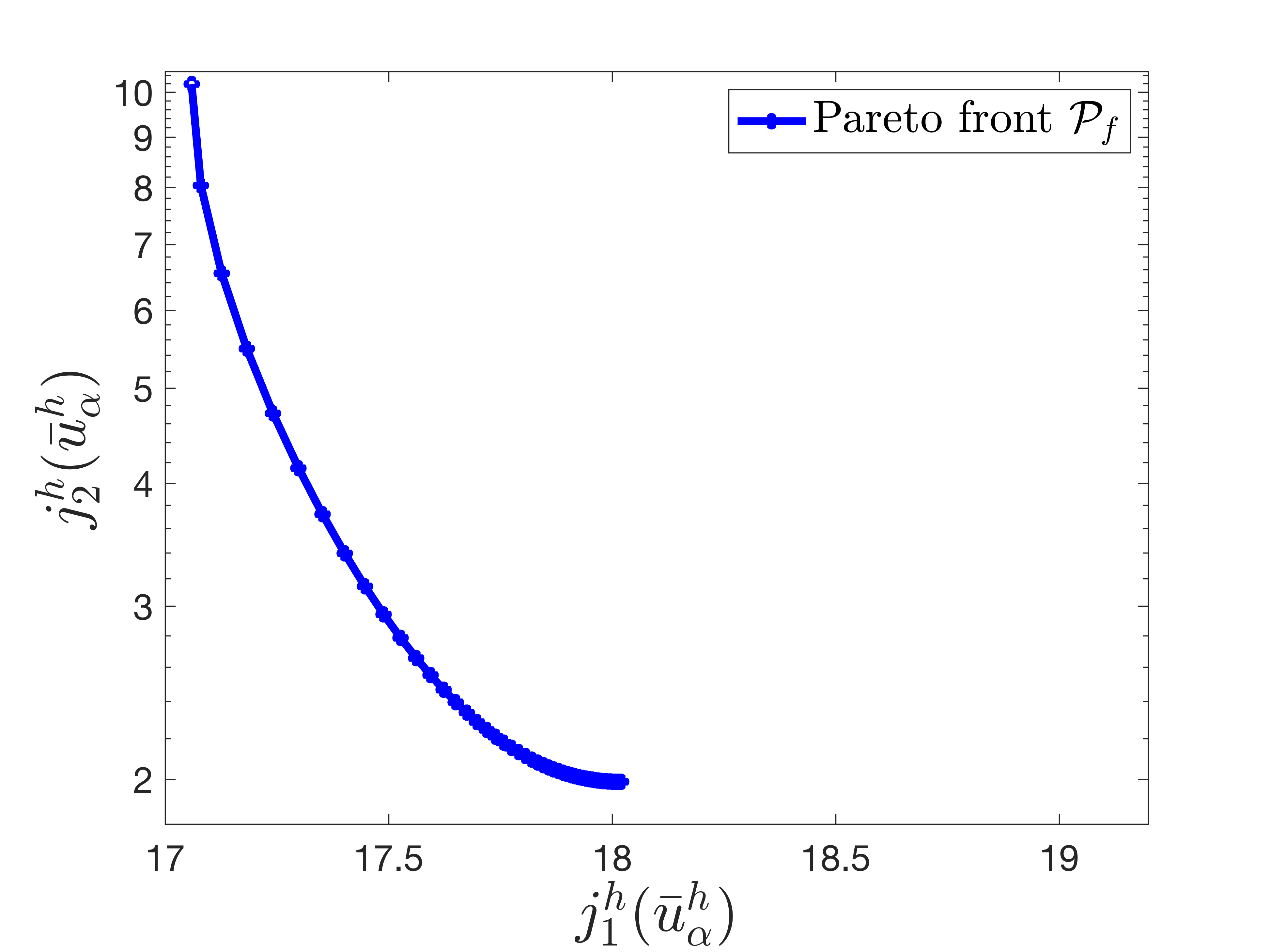}
\qquad
{\small{(1.C)}}
\end{minipage}
\begin{minipage}[c]{0.38\textwidth}\centering
\includegraphics[trim={0 0 0 0},clip,width=5.5cm,height=4.0cm,scale=0.1]{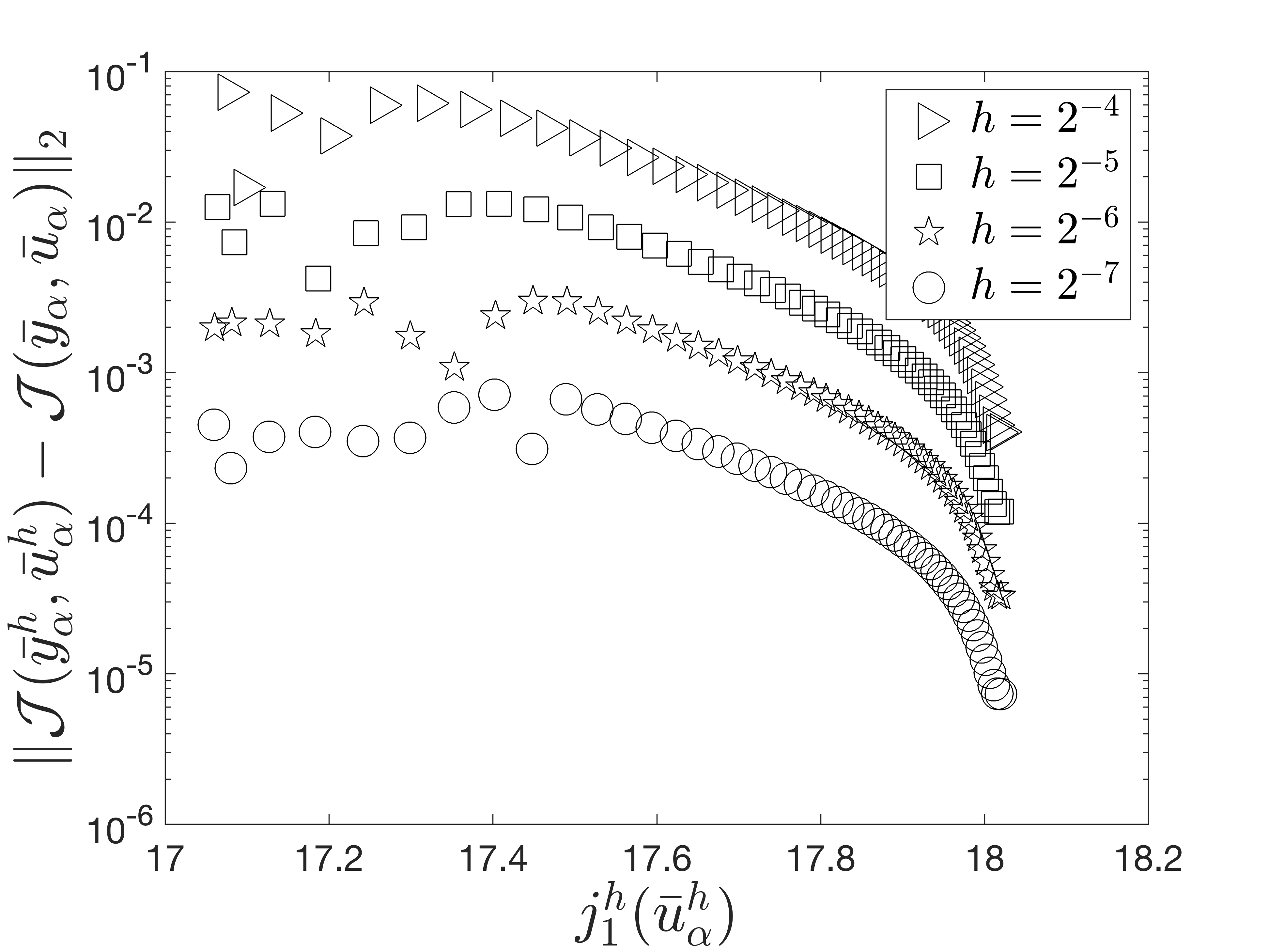}
\qquad
{\small{(1.D)}}
\end{minipage}
\begin{minipage}[c]{0.38\textwidth}\centering
\includegraphics[trim={0 0 0 0},clip,width=5.5cm,height=4.0cm,scale=0.1]{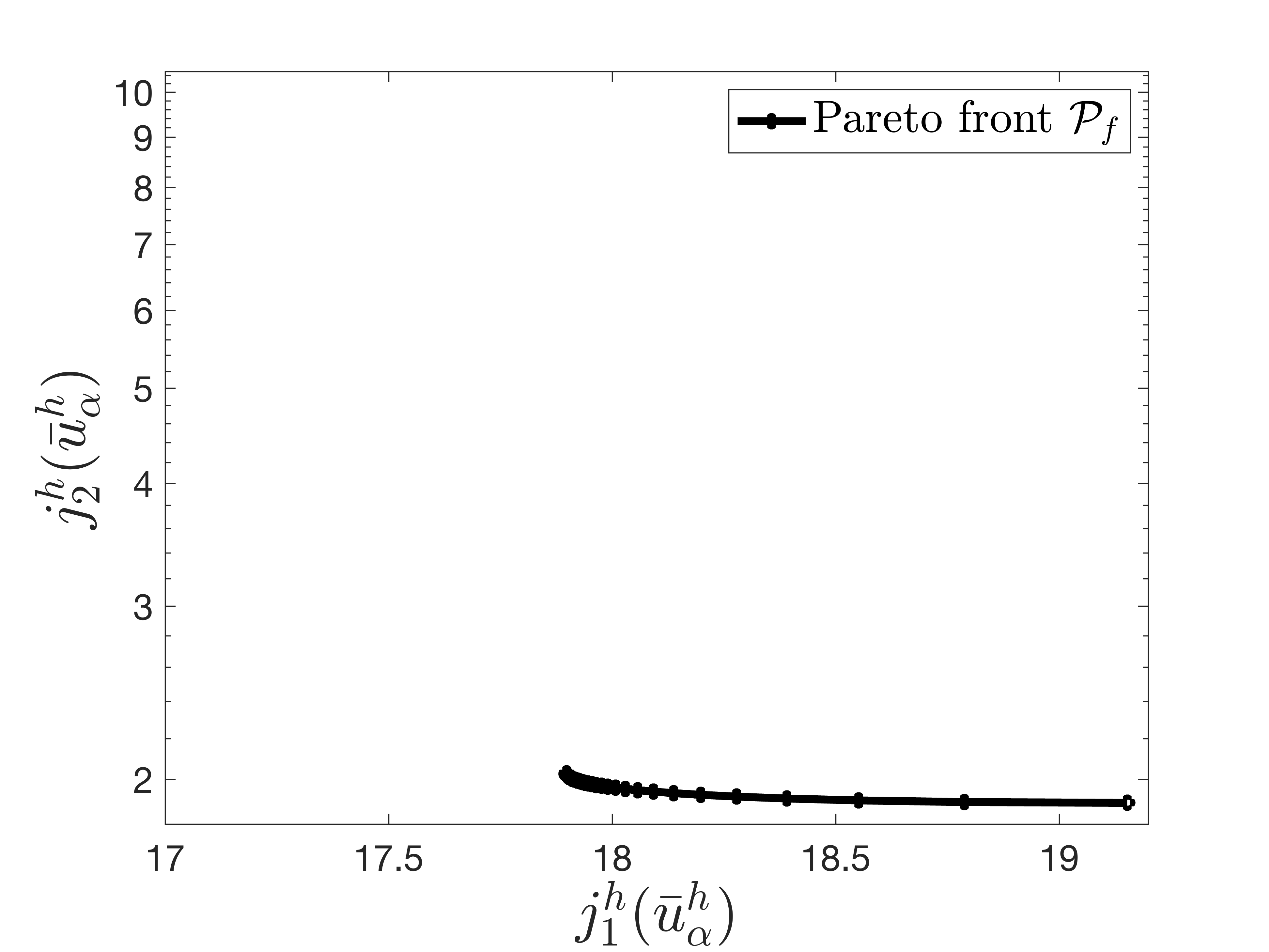}
\qquad
{\small{(1.E)}}
\end{minipage}
\begin{minipage}[c]{0.38\textwidth}\centering
\includegraphics[trim={0 0 0 0},clip,width=5.5cm,height=4.0cm,scale=0.1]{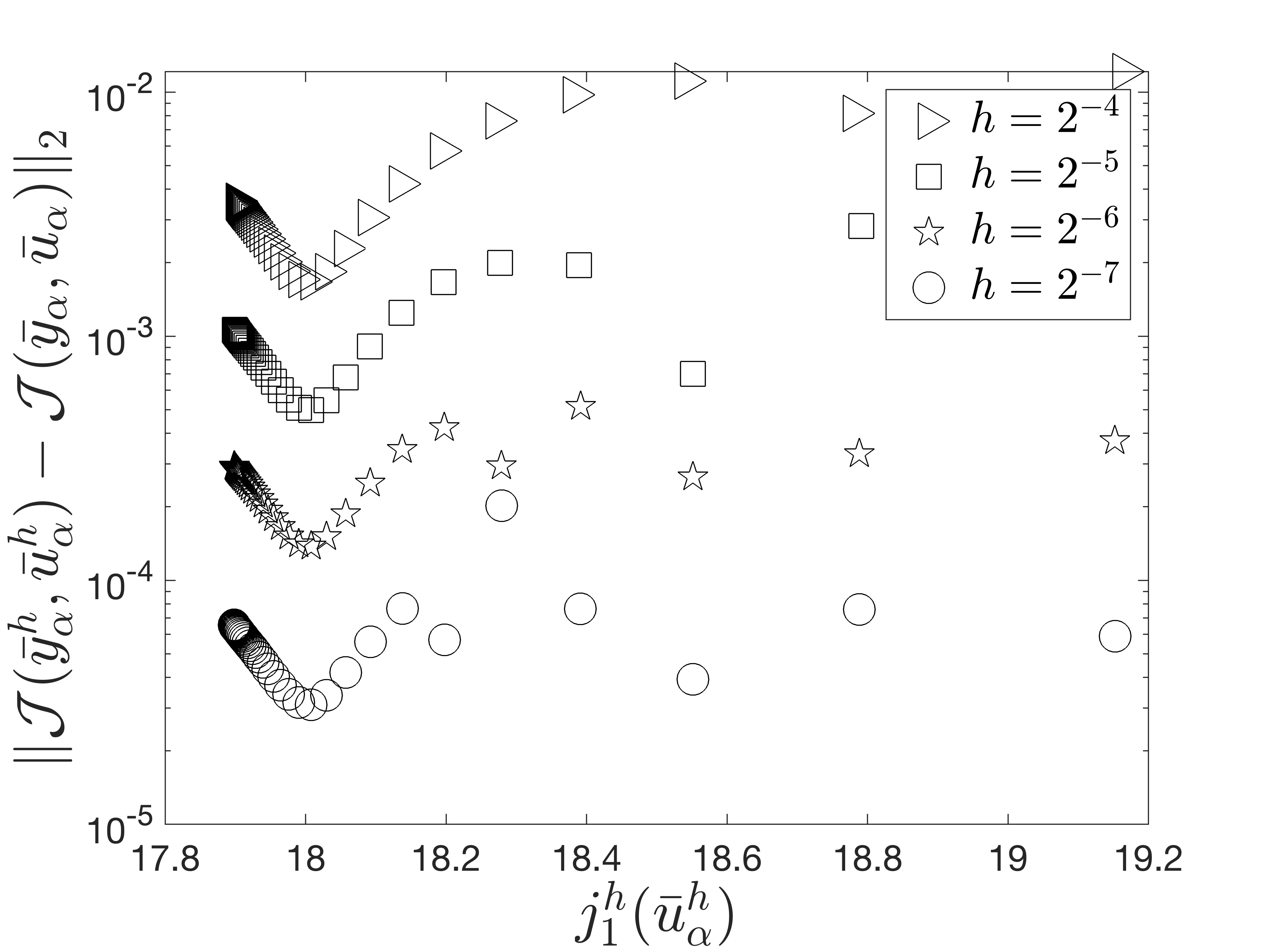}
\qquad
{\small{(1.F)}}
\end{minipage}
\begin{minipage}[c]{0.38\textwidth}\centering
\includegraphics[trim={0 0 0 0},clip,width=5.5cm,height=4.0cm,scale=0.1]{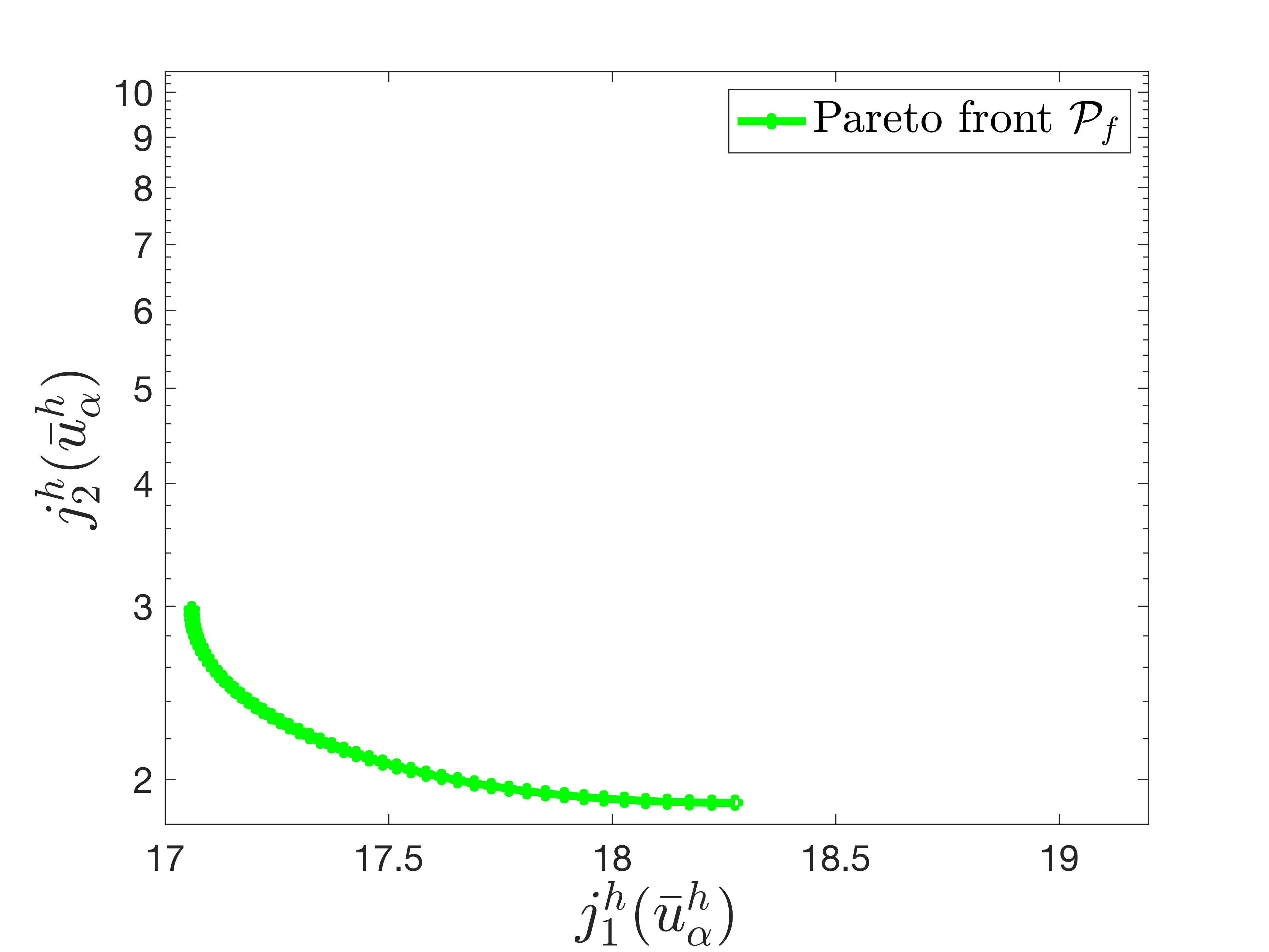}
\qquad
{\small{(1.G)}}
\end{minipage}
\begin{minipage}[c]{0.38\textwidth}\centering
\includegraphics[trim={0 0 0 0},clip,width=5.5cm,height=4.0cm,scale=0.1]{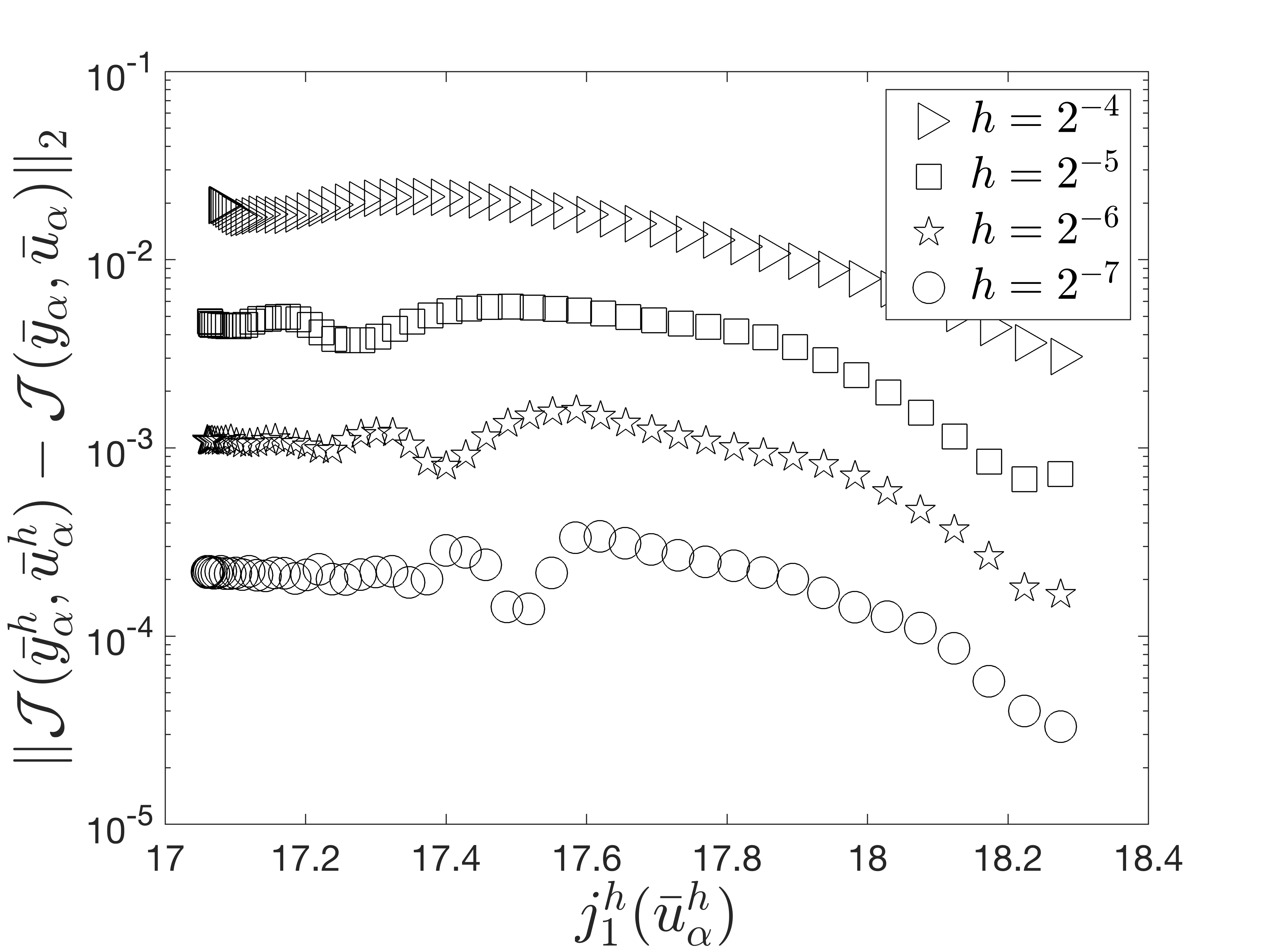}
\qquad
{\small{(1.H)}}
\end{minipage}
\caption{Pareto front and approximation error $\|\mathcal{J}(\bar{y}_{\alpha}^{h},\bar{u}_{\alpha}^{h}) - \mathcal{J}(\bar{y}_{\alpha},\bar{u}_{\alpha})\|_{2}$ with $50$ different values of $\alpha$ considering mesh refinement under the WSM for the cases $\lambda_1=\lambda_2=1$ (1.A)--(1.B), $\lambda_1=0.1$ and $\lambda_2=1$ (1.C)--(1.D), $\lambda_1=1$ and $\lambda_2=0.1$ (1.E)--(1.F), and  $\lambda_1=\lambda_2=0.1$ (1.G)--(1.H).}
\label{fig:ex1_1}
\end{center}
\end{figure}

\begin{figure}
\begin{center}
\begin{minipage}[c]{0.235\textwidth}\centering
\includegraphics[trim={0 0 0 0},clip,width=4.3cm,scale=0.1]{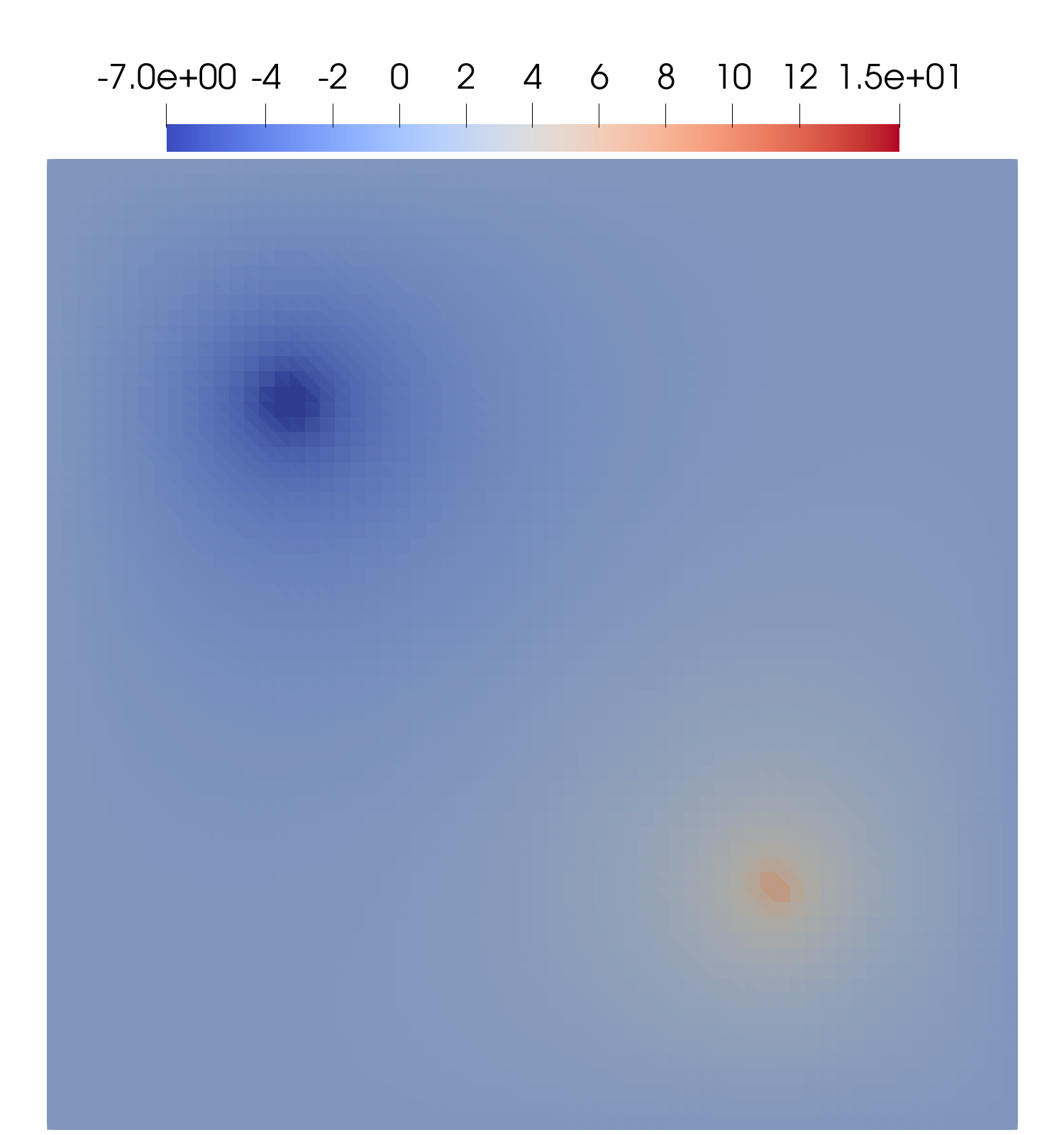}\\
\quad 
{\small{(2.A)}}
\end{minipage}
\begin{minipage}[c]{0.235\textwidth}\centering
\includegraphics[trim={0 0 0 0},clip,width=4.3cm,scale=0.1]{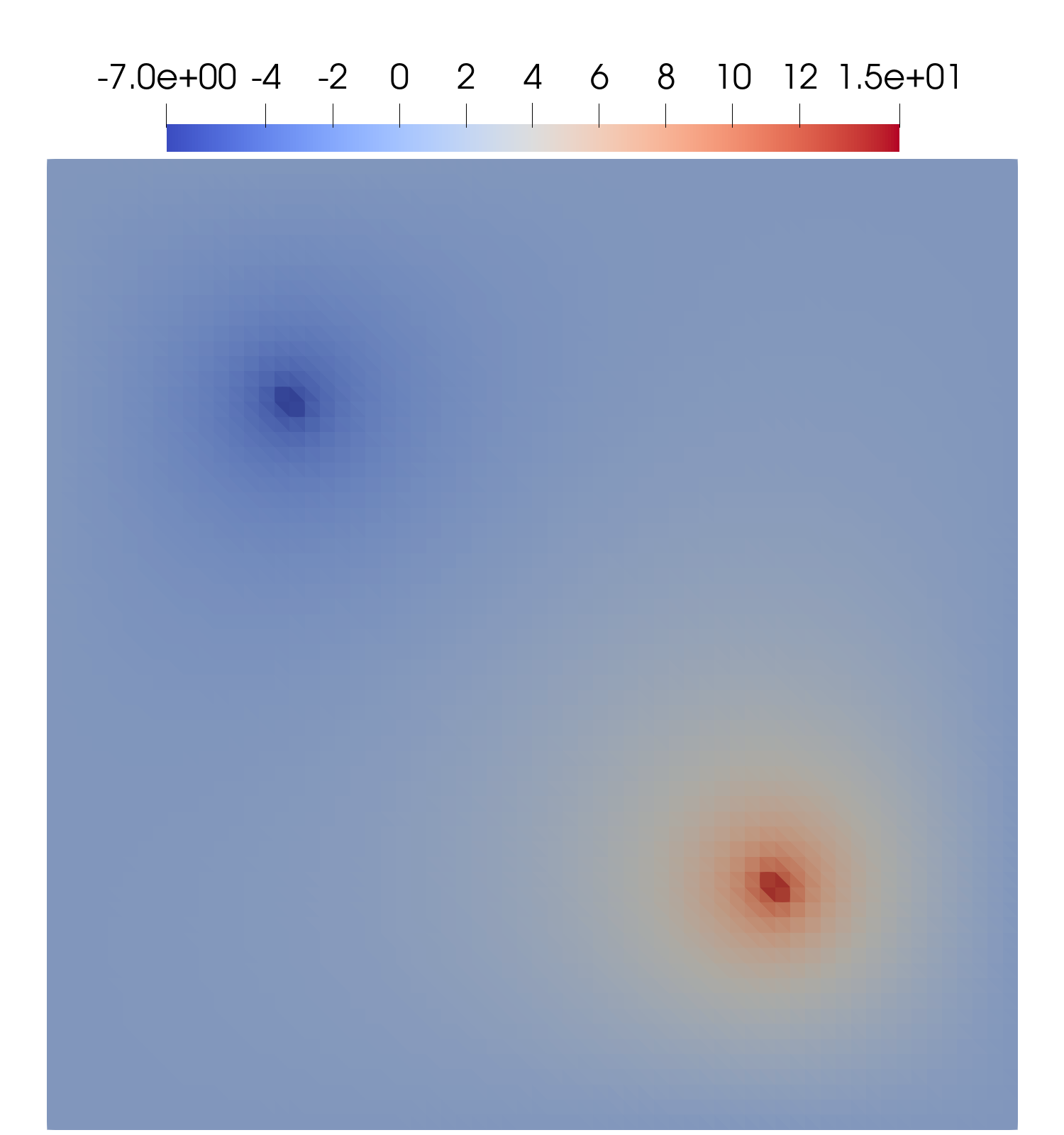}\\
\quad
{\small{(2.B)}}
\end{minipage}
\begin{minipage}[c]{0.235\textwidth}\centering
\includegraphics[trim={0 0 0 0},clip,width=4.3cm,scale=0.1]{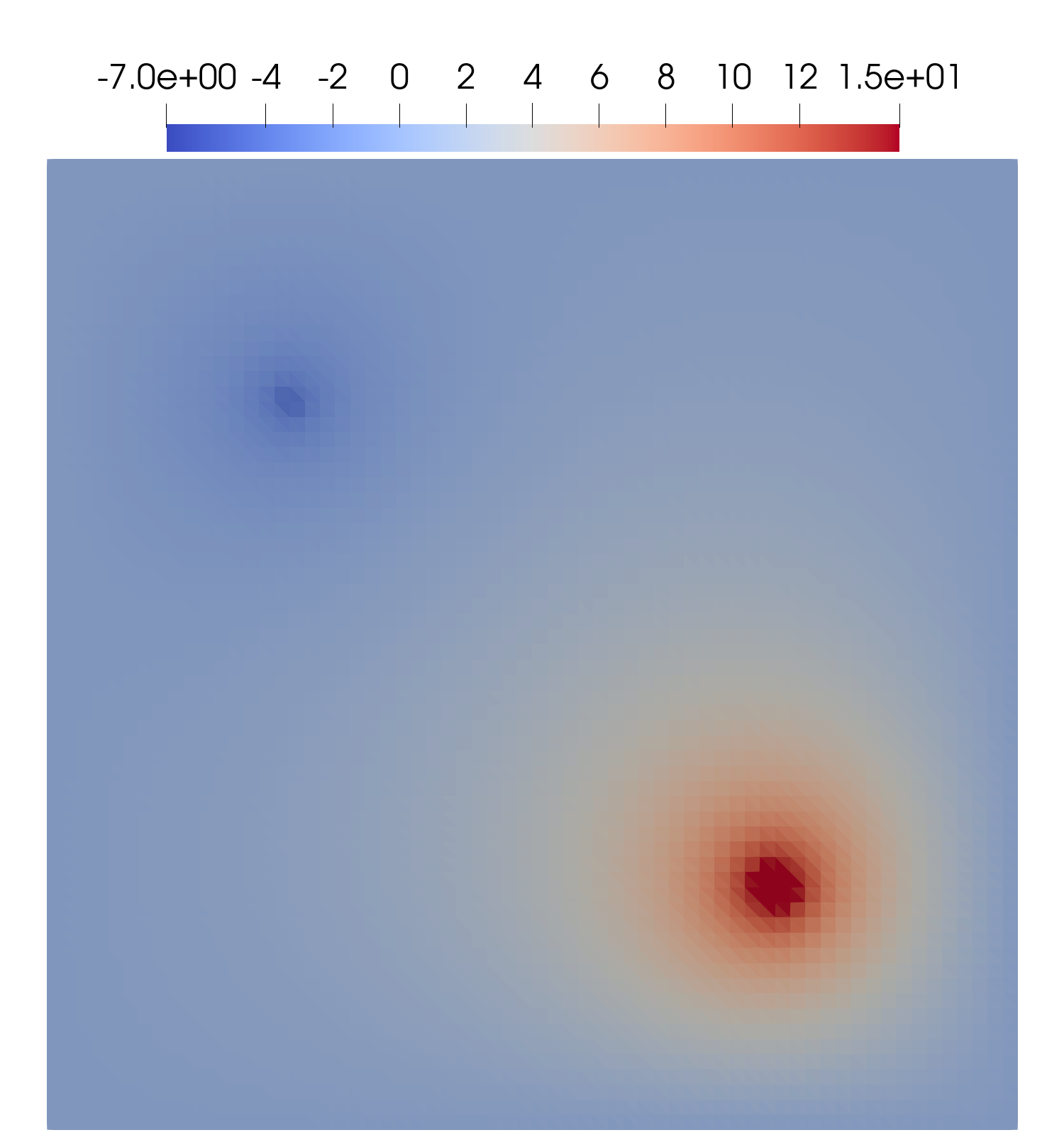}\\    
\quad
{\small{(2.C)}}
\end{minipage}
\begin{minipage}[c]{0.235\textwidth}\centering
\includegraphics[trim={0 0 0 0},clip,width=4.3cm,scale=0.1]{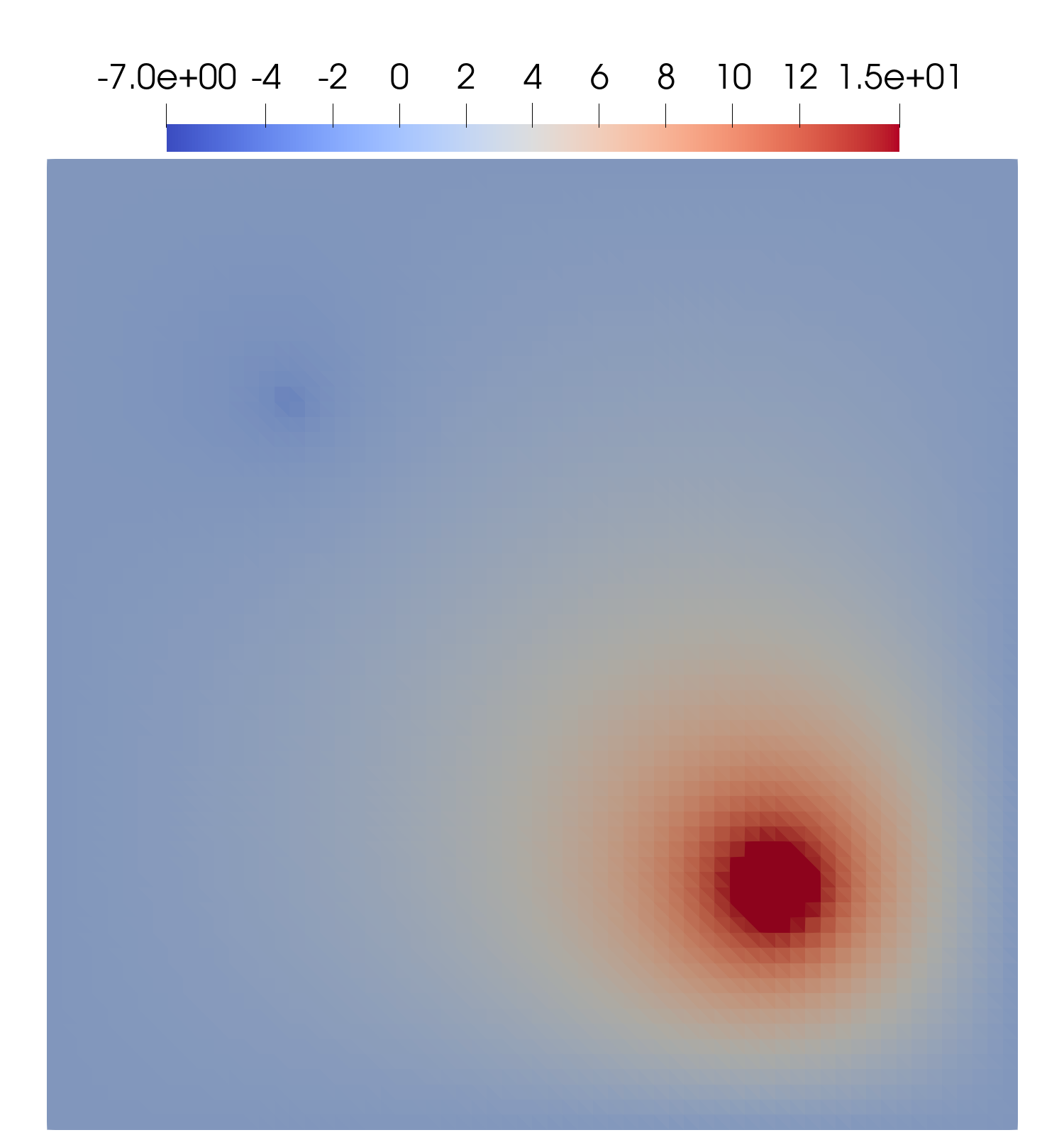}\\
\quad
{\small{(2.D)}}
\end{minipage}
\caption{Approximate optimal control $\bar{u}_{\alpha}^{h}$ when $\alpha=(0.2,0.8)$ (2.A), $\alpha=(0.4,0.6)$ (2.B), $\alpha=(0.6,0.4)$ (2.C), and $\alpha=(0.8,0.2)$ (2.D).
Here, $h=2^{-6}$ and $\lambda_1=\lambda_2=0.1$}
\label{fig:ex1_3}
\end{center}
\end{figure}

\begin{table}[h!]
\centering
\begin{tabular}{|c|c|c|c|c|}
\hline
$h$ & $\alpha=(0.2,0.8)$ & $\alpha=(0.4,0.6)$ &  $\alpha=(0.6,0.4)$ & $\alpha=(0.8,0.2)$ \\
\hline
$2^{-2}$ & 0.727125 & 0.994289 & 1.312741 & 1.580559 \\
$2^{-3}$ & 0.399550 & 0.555188 & 0.704527 & 0.790838 \\
$2^{-4}$ & 0.209558 & 0.300159 & 0.353305 & 0.389034 \\
$2^{-5}$ & 0.107604 & 0.155751 & 0.173634 & 0.193365 \\
\hline
Rate of convergence & 0.92 & 0.89 & 0.97 & 1.01 \\
\hline
\end{tabular}
\caption{Approximation error $\|\bar{u}_{\alpha} - \bar{u}_{\alpha}^{h}\|_{\U}$ considering the four different choices for $\alpha$.}
\label{table:ex1}
\end{table}

%%%%%%%%%%%%%%%%%%%%%%%%%%%%%%%%%%%%%%%%%%%%%%%%%%%%%%%%%%%%%%%%%
%%%%%%%%%%%%%%%%%%%%%%%%%%%%%%%%%%%%%%%%%%%%%%%%%%%%%%%%%%%%%%%%%

\subsection{Example 2 (RPM).}

For the reference point method, we choose in \eqref{def:zeta_1} the scaling parameters $\mathfrak{h}^{\perp} = \mathfrak{h}^{\|} = 0.2$.
We note that the values of the points $\zeta$ change in each refinement, due to their implicit dependency on $h$; see \eqref{def:general_zeta}.
In this example, we show the error in the steps $\zeta^{2}, \zeta^{4}, \zeta^{7}$ and $\zeta^{9}$, whose values on the finest mesh ($h=2^{-8}$) are
\begin{align}
    \label{Eq_zeta}
    \zeta^{2} \approx (16.89,2.58), \qquad 
    \zeta^{4} \approx (17.04,2.21), \qquad 
    \zeta^{7} \approx (17.49,1.82), \qquad 
    \zeta^{9} \approx (17.88,1.71).
\end{align}

We present the results obtained for this example in Figure \ref{fig:ex2_1} and Table \ref{table:ex2}.
In fact, we get similar results to those shown for the WSM. In particular, we observe optimal experimental rates of convergence $\mathcal{O}(h)$ for all the different cases of $\zeta$, which is in agreement with Theorem \ref{thm:improved_RPM}.

\begin{figure}
\begin{center}
\begin{minipage}[c]{0.235\textwidth}\centering
\includegraphics[trim={0 0 0 0},clip,width=4.3cm,scale=0.1]{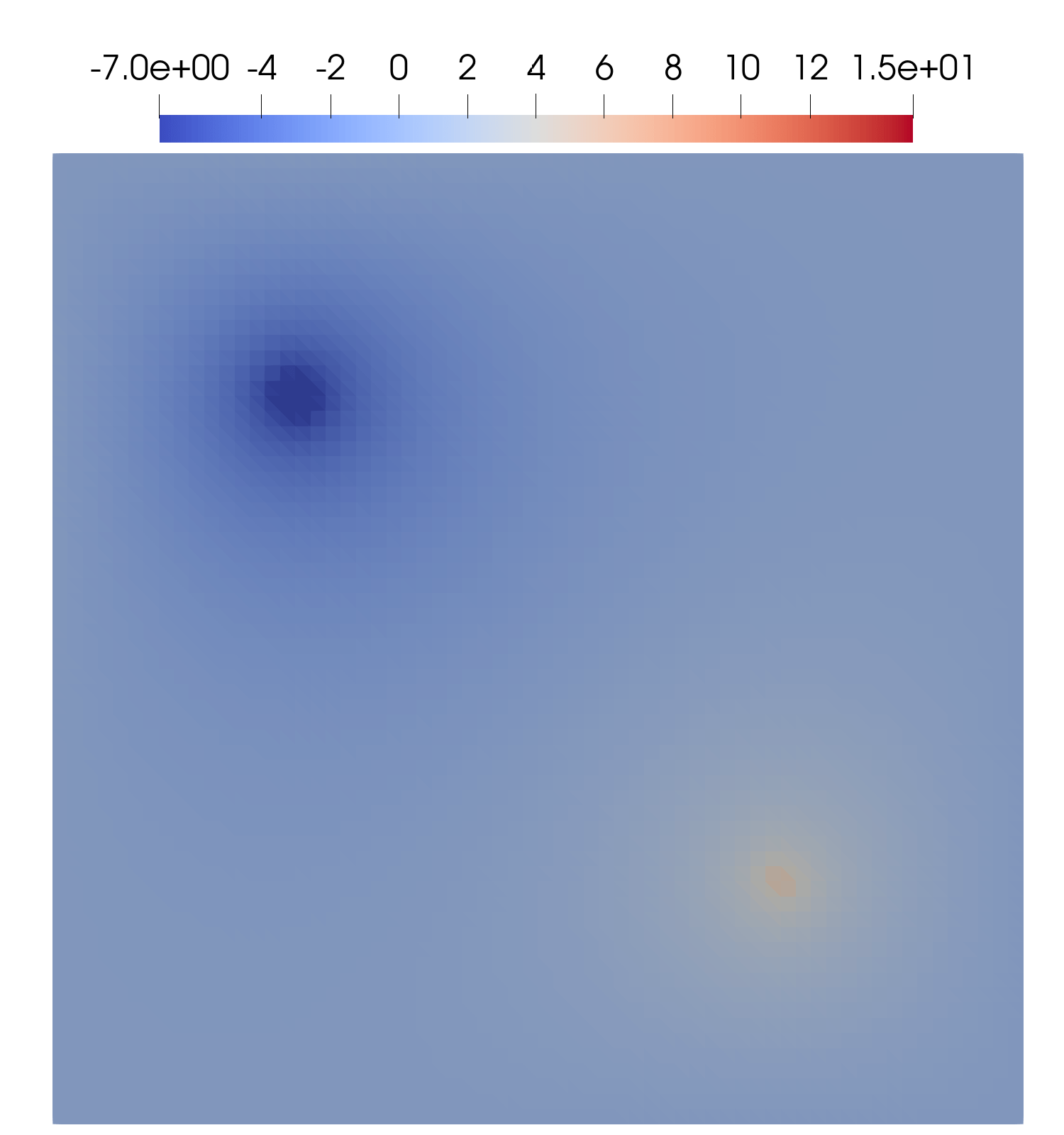}\\
\quad 
{\small{(3.A)}}
\end{minipage}
\begin{minipage}[c]{0.235\textwidth}\centering
\includegraphics[trim={0 0 0 0},clip,width=4.3cm,scale=0.1]{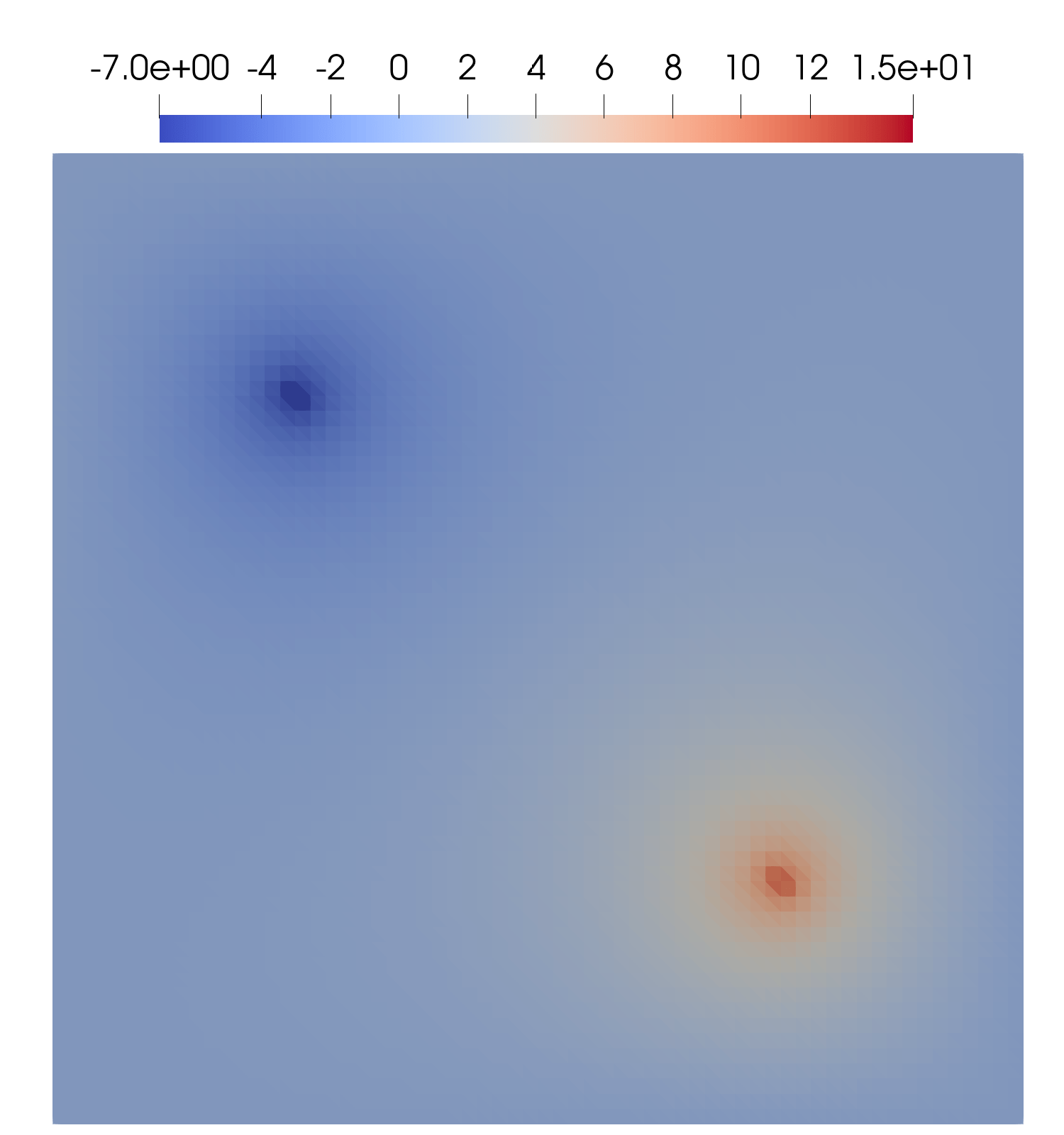}\\
\quad
{\small{(3.B)}}
\end{minipage}
\begin{minipage}[c]{0.235\textwidth}\centering
\includegraphics[trim={0 0 0 0},clip,width=4.3cm,scale=0.1]{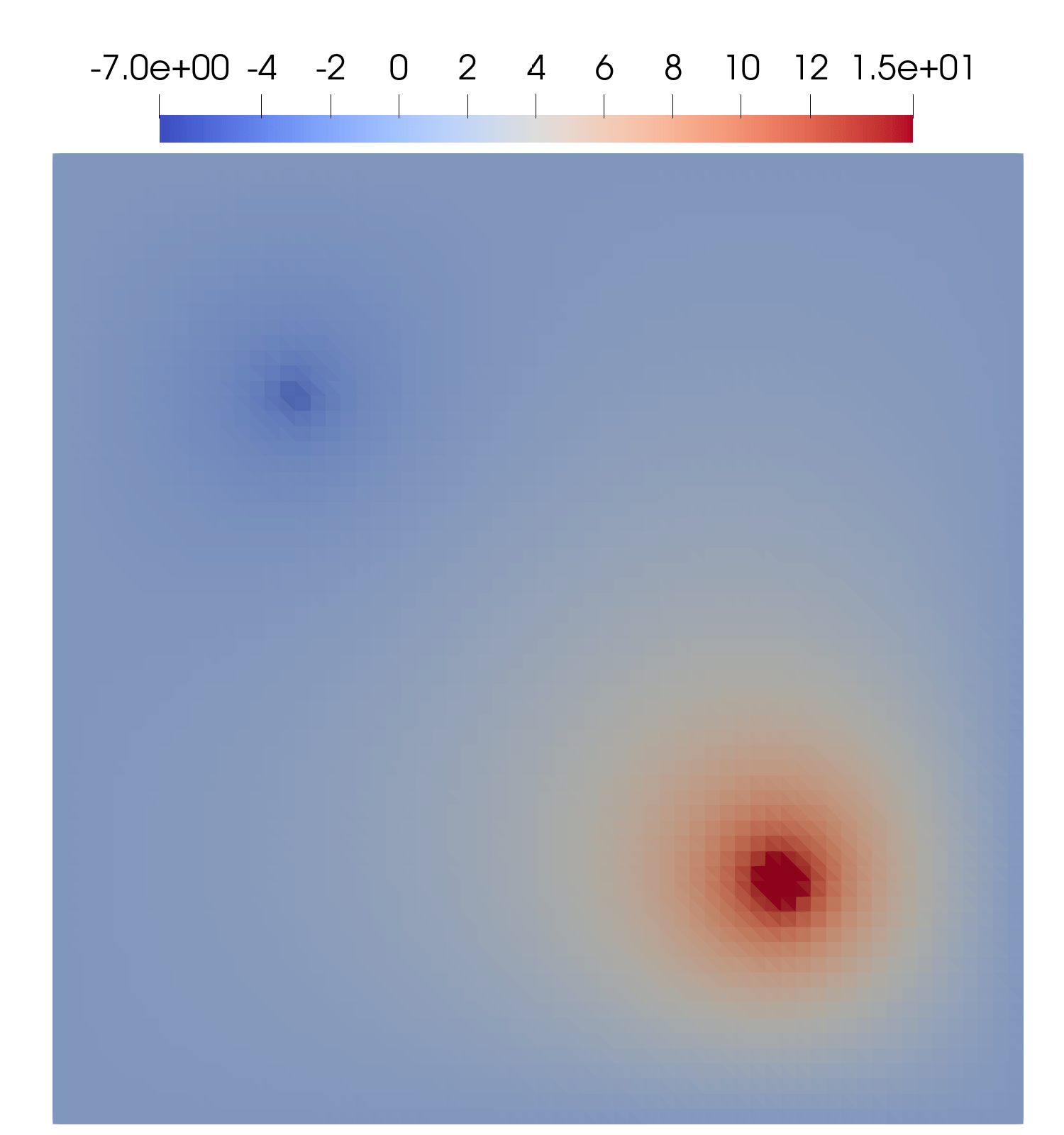}\\    
\quad
{\small{(3.C)}}
\end{minipage}
\begin{minipage}[c]{0.235\textwidth}\centering
\includegraphics[trim={0 0 0 0},clip,width=4.3cm,scale=0.1]{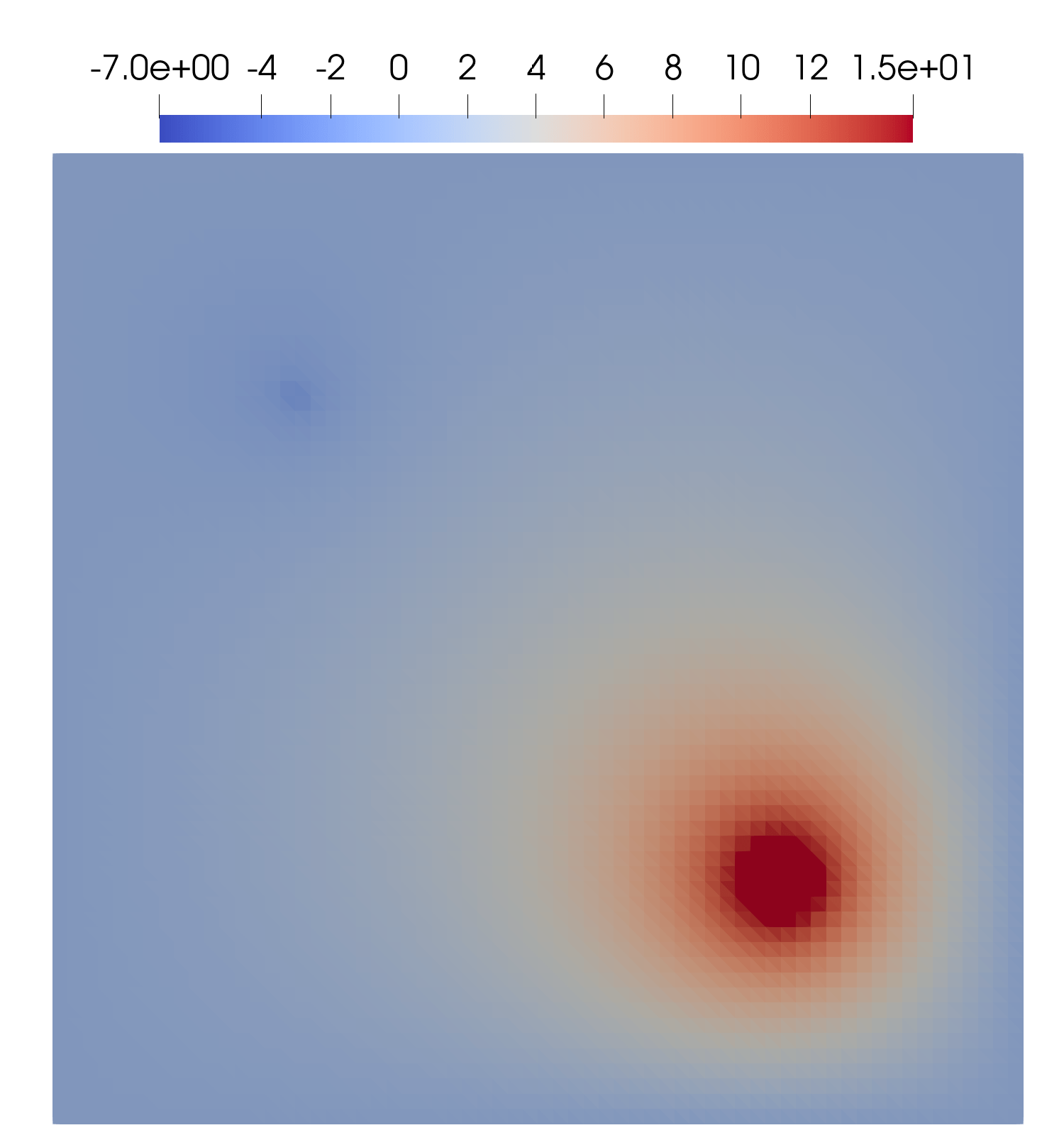}\\
\quad
{\small{(3.D)}}
\end{minipage}
\caption{Approximate optimal control $\bar{u}_{\zeta}^{h}$ for $\zeta^{9}$ (3.A), $\zeta^{7}$ (3.B), $\zeta^{4}$ (3.C), and $\zeta^{2}$ (3.D).
Here, $h=2^{-6}$ and $\lambda_1=\lambda_2=0.1$}
\label{fig:ex2_1}
\end{center}
\end{figure}

\begin{table}[h!]
\centering
\begin{tabular}{|c|c|c|c|c|}
\hline
$h$ & $\zeta^2$ & $\zeta^4$ &  $\zeta^7$ & $\zeta^9$ \\
\hline
$2^{-2}$ & 1.583765 & 1.338101 & 1.002442 & 0.928546 \\
$2^{-3}$ & 0.799229 & 0.711743 & 0.489412 & 0.375962 \\
$2^{-4}$ & 0.390953 & 0.353613 & 0.266316 & 0.195219 \\
$2^{-5}$ & 0.193401 & 0.173748 & 0.139464 & 0.096843 \\
\hline
Rate of convergence & 1.01 & 0.98 & 0.94 & 1.07 \\
\hline
\end{tabular}
\caption{Approximation error $\|\bar{u}_{\zeta} - \bar{u}_{\zeta}^{h}\|_{\U}$ considering the four different choices for $\zeta$ mentioned in \eqref{Eq_zeta}.}
\label{table:ex2}
\end{table}

%%%%%%%%%%%%%%%%%%%%%%%%%%%%%%%%%%%%%%%%%%%%%%%%%%%%%%%%%%%%%%%%%
%%%%%%%%%%%%%%%%%%%%%%%%%%%%%%%%%%%%%%%%%%%%%%%%%%%%%%%%%%%%%%%%%
%%%%%%%%%%%%%%%%%%%%%%%%%%%%%%%%%%%%%%%%%%%%%%%%%%%%%%%%%%%%%%%%%
%%%%%%%%%%%%%%%%%%%%%%%%%%%%%%%%%%%%%%%%%%%%%%%%%%%%%%%%%%%%%%%%%

%%-----------------------------
\bibliographystyle{plainurl}
\bibliography{references.bib}
%%-----------------------------
\end{document}